\newtheorem{Theorem}{Theorem}[section]
\newtheorem{Corollary}[Theorem]{Corollary}
\newtheorem{Lemma}[Theorem]{Lemma}
\newtheorem{Definition}[Theorem]{Definition}
\newtheorem{Conjecture}[Theorem]{Conjecture}
\newtheorem{Problem}[Theorem]{Problem}
\newtheorem{Remark}[Theorem]{Remark}
\numberwithin{equation}{section}
\begin{document}

	\title[concavity property of minimal $L^{2}$ integrals]
	{Concavity property of minimal $L^{2}$ integrals with Lebesgue measurable gain \uppercase\expandafter{\romannumeral2}}

	\author{Qi'an Guan}
	\address{Qi'an Guan: School of
		Mathematical Sciences, Peking University, Beijing 100871, China.}
	\email{guanqian@math.pku.edu.cn}

	\author{Zhitong Mi}
	\address{Zhitong Mi: School of Mathematics and Statistics, Beijing Jiaotong University, Beijing
		100044, China.
	}
	\email{zhitongmi@amss.ac.cn}
	\author{Zheng Yuan}
	\address{Zheng Yuan: Institute of Mathematics, Academy of Mathematics
		and Systems Science, Chinese Academy of Sciences, Beijing 100190, China.}
	\email{yuanzheng@amss.ac.cn}

	\thanks{}
	
	\subjclass[2020]{32D15 32E10 32L10 32U05 32W05}

	\keywords{concavity, minimal $L^{2}$ integral, weakly pseudoconvex K\"ahler manifold, multiplier ideal sheaf,
		plurisubharmonic function, sublevel set}
	
	\date{\today}
	
	\dedicatory{}
	
	\commby{}
	
	
	\begin{abstract}
		In this article, we present a concavity property of the minimal $L^{2}$ integrals related to multiplier ideal sheaves
		with Lebesgue measurable gain on weakly pseudoconvex K\"ahler manifolds.
		As applications, we give a necessary condition for the concavity degenerating to linearity,
		and a characterization for the equality in  optimal jets $L^2$ extension problem on open Riemann surfaces.
	\end{abstract}
	
	\maketitle
	
	\section{Introduction}
	
	The truth of Demailly's strong openness conjecture (SOC for short), i.e. $\mathcal{I}(\varphi)=\mathcal{I}_+(\varphi):=\mathop{\cup} \limits_{\epsilon>0}\mathcal{I}((1+\epsilon)\varphi)$
	is an important feature of multiplier ideal sheaves and used in the study of several complex variables, complex algebraic geometry and complex differential geometry
	(see e.g. \cite{GZSOC,K16,cao17,cdM17,FoW18,DEL18,ZZ2018,GZ20,berndtsson20,ZZ2019,ZhouZhu20siu's,FoW20,KS20,DEL21}),
	where $\varphi$ is a plurisubharmonic function (see \cite{Demaillybook}),
	and the multiplier ideal sheaf $\mathcal{I}(\varphi)$ is the sheaf of germs of holomorphic functions $f$ such that $|f|^2e^{-\varphi}$ is locally integrable (see e.g. \cite{Tian},\cite{Nadel},\cite{Siu96},\cite{DEL},\cite{DK01},\cite{DemaillySoc},\cite{DP03},\cite{Lazarsfeld},\cite{Siu05},\cite{Siu09},\cite{DemaillyAG},\cite{Guenancia}).

	The truth of Demailly's strong openness conjecture was proved by Guan-Zhou \cite{GZSOC} (the 2-dimensional case was proved by Jonsson-Musta\c{t}$\breve{a}$
	\cite{JonssonMustata}). When $\mathcal{I}(\varphi)=\mathcal{O}$, the SOC degenerates to the openness conjecture (OC for short), which was posed by Demailly-Koll\'ar \cite{DK01} and proved by Berndtsson \cite{Berndtsson2} (the 2-dimensional case was proved by Favre-Jonsson in \cite{FavreJonsson}).
	
	Recall that Berndtsson \cite{Berndtsson2} proved the OC by establishing an effectiveness result of the OC.
	Stimulated by Berndtsson's effectiveness result, and continuing the solution of the SOC \cite{GZSOC},
	Guan-Zhou \cite{GZeff} established an effectiveness result of the SOC by considering (for the first time) the minimal $L^{2}$ integral related to multiplier ideal sheaves on the sublevel set $\{\varphi<0\}$,
	i.e. the pseudoconvex domain $D$.

	In \cite{G16}, by considering  all the minimal $L^{2}$ integrals on the sublevels of the weights $\varphi$,
	Guan established a sharp version of the effectiveness result of the SOC
	and presented a concavity property of the minimal $L^2$ integrals.
	After that, Guan \cite{G2018} generalized the concavity property for smooth gain on Stein manifolds.

	In \cite{GM}, Guan-Mi gave some applications of the concavity property: a necessary condition for the concavity degenerating to linearity,
	a characterization for 1-dimensional case,
	and a characterization for  the equality in optimal $L^2$ extension problem on open Riemann surfaces with subharmonic weights.
	In \cite{GY-concavity}, Guan-Yuan obtained the concavity property with Lebesgue measurable gain on Stein manifolds with applications:
	necessary conditions for the concavity degenerating to linearity,
	characterizations for 1-dimensional case,
	and a characterization for the equality in optimal $L^2$ extension problem  on open Riemann surfaces with weights may not be subharmonic.
	
	In the present article, we generalize the above concavity property to weakly pseudoconvex K\"ahler manifolds with applications:
	a necessary condition for the concavity degenerating to linearity,
	and a characterization for  the equality in optimal jets $L^2$ extension problem on open Riemann surfaces.
	
	\subsection{Concavity property of minimal $L^2$ integrals on weakly pseudoconvex K\"ahler manifolds}
	
	\
	
	Let $M$ be a complex manifold. Let $X$ and $Z$ be closed subsets of $M$. We say that a triple $(M,X,Z)$ satisfies condition $(A)$, if the following statements hold:
	
	$\uppercase\expandafter{\romannumeral1}.$ $X$ is a closed subset of $M$ and $X$ is locally negligible with respect to $L^2$ holomorphic functions; i.e., for any local coordinated neighborhood $U\subset M$ and for any $L^2$ holomorphic function $f$ on $U\backslash X$, there exists an $L^2$ holomorphic function $\tilde{f}$ on $U$ such that $\tilde{f}|_{U\backslash X}=f$ with the same $L^2$ norm;
	
	$\uppercase\expandafter{\romannumeral2}.$ $Z$ is an analytic subset of $M$ and $M\backslash (X\cup Z)$ is a weakly pseudoconvex K\"ahler manifold.
	
	Let $M$ be an $n-$dimensional complex manifold, and let $(M,X,Z)$  satisfy condition $(A)$. Let $K_M$ be the canonical line bundle on $M$. Let $\psi$ be a plurisubharmonic function on $M$ such that $\{\psi<-t\}\backslash (X\cup Z)$ is a weakly pseudoconvex K\"ahler manifold for any $t\in\mathbb{R}$. Let $\varphi$ be a Lebesgue measurable function on $M$, such that $\psi+\varphi$ is a plurisubharmonic function on $M$. Denote $T=-\sup\limits_M \psi$.
	\begin{Definition}
		We say that a positive measurable function $c$ (so-called ``\textbf{gain}") on $(T,+\infty)$ in class $P_{T,M}$ if the following two statements hold:
		\par
		$(1)$ $c(t)e^{-t}$ is decreasing with respect to $t$;
		\par
		$(2)$ there is a closed subset $E$ of $M$ such that $E\subset Z\cap \{\psi(z)=-\infty\}$ and for any compact subset $K\subset M\backslash E$, $e^{-\varphi}c(-\psi)$ has a positive lower bound on $K$.
	\end{Definition}

	Let $Z_0$ be a subset of $\{\psi=-\infty\}$ such that $Z_0 \cap
	Supp(\mathcal{O}/\mathcal{I}(\varphi+\psi))\neq \emptyset$. Let $U \supset Z_0$ be
	an open subset of $M$, and let $f$ be a holomorphic $(n,0)$ form on $U$. Let $\mathcal{F}_{z_0} \supset \mathcal{I}(\varphi+\psi)_{z_0}$ be an ideal of $\mathcal{O}_{z_0}$ for any $z_{0}\in Z_0$.
	
	Denote
	\begin{equation}
		\begin{split}
			\inf\{\int_{ \{ \psi<-t\}}|\tilde{f}|^2e^{-\varphi}c(-\psi): \tilde{f}\in
			H^0(\{\psi<-t\},\mathcal{O} (K_M)  ) \\
			\&\, (\tilde{f}-f)\in
			H^0(Z_0 ,(\mathcal{O} (K_M) \otimes \mathcal{F})|_{Z_0})\}
		\end{split}
	\end{equation}
	by $G(t)$, where $t\in[T,+\infty)$, $c$ is a nonnegative function on $(T,+\infty)$, $|f|^2:=\sqrt{-1}^{n^2}f\wedge \bar{f}$ for any $(n,0)$ form $f$ and $(\tilde{f}-f)\in
	H^0(Z_0 ,(\mathcal{O} (K_M) \otimes \mathcal{F})|_{Z_0} )$ means $(\tilde{f}-f,z_0)\in(\mathcal{O}(K_M)\otimes \mathcal{F})_{z_0}$ for all $z_0\in Z_0$.
	If there is no holomorphic $(n,0)$ form $\tilde{f}$ on $\{\psi< -t\}$ satisfying $(\tilde{f}-f)\in
	H^0(Z_0 ,(\mathcal{O} (K_M) \otimes \mathcal{F})|_{Z_0} )$, we set $G(t)=+\infty$.
	
	In the present article, we obtain the following concavity for $G(t)$ on weakly pseudoconvex K\"ahler manifolds.
	\begin{Theorem}(Main theorem)
		Let $c\in\mathcal{P}_{T,M}$. If there exists $t \in [T,+\infty)$ satisfying that $G(t)<+\infty$, then $G(h^{-1}(r))$ is concave with respect to  $r\in (\int_{T_1}^{T}c(t)e^{-t}dt,\int_{T_1}^{+\infty}c(t)e^{-t}dt)$, $\lim\limits_{t\to T+0}G(t)=G(T)$ and $\lim\limits_{t \to +\infty}G(t)=0$, where $h(t)=\int_{T_1}^{t}c(t_1)e^{-t_1}dt_1$ and $T_1 \in (T,+\infty)$.
		\label{maintheorem}
	\end{Theorem}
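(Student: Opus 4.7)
The plan is to imitate Guan's concavity strategy (as in \cite{G16,G2018,GY-concavity}), where the whole statement is reduced to a single two-point inequality of the form
\begin{equation*}
G(t_1)\le G(t_2)+\bigl(h(t_2)-h(t_1)\bigr)\cdot\text{slope},
\end{equation*}
which, after the change of variable $r=h(t)$, becomes concavity of $r\mapsto G(h^{-1}(r))$. More precisely, for any $T\le t_1<t_2$ with $G(t_2)<+\infty$, I would try to produce, from a near-minimizer $\tilde f_{t_2}$ on $\{\psi<-t_2\}$, a competitor $F$ on $\{\psi<-t_1\}$ having the same germ as $f$ along $Z_0$ and satisfying
\begin{equation*}
\int_{\{\psi<-t_1\}}|F|^2e^{-\varphi}c(-\psi)\le \int_{\{\psi<-t_2\}}|\tilde f_{t_2}|^2e^{-\varphi}c(-\psi)+\bigl(h(t_2)-h(t_1)\bigr)\cdot C(\tilde f_{t_2}),
\end{equation*}
where $C(\tilde f_{t_2})$ is a quantity independent of $t_1$. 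Optimizing over $\tilde f_{t_2}$ and minimizing over the slope will give the desired concavity.

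To build $F$ I would use the standard Guan--Zhou style Hörmander/Demailly $L^2$-estimate on the weakly pseudoconvex Kähler manifold $M\setminus(X\cup Z)$. Fix an auxiliary smooth cutoff $\chi_t$ that is $1$ on $\{\psi<-t_2\}$ and $0$ on $\{\psi>-t_1\}$, built from a smooth approximation of the indicator of $(-\infty,-t_1]$ composed with $\psi$. Set $F=\chi_t\tilde f_{t_2}-u$, where $u$ solves $\bar\partial u=\bar\partial(\chi_t\tilde f_{t_2})$ with weight of the form $\varphi+\psi+\eta(-\psi)$ (and twisting factor $1+s(-\psi)$ in the Donnelly--Fefferman / Berndtsson trick), for a carefully chosen pair $(\eta,s)$ built out of $c$ via
\begin{equation*}
\eta(y)=-\log h(-y)\cdot(\text{terms}),\qquad s(y)=\frac{1}{c(y)e^{-y}}\int_{T_1}^{y}c(r)e^{-r}\,dr,
\end{equation*}
so that the standard computation $(1+s')e^{-\eta}=c(-\psi)e^{\psi}\cdot(\text{mass of }h)$ produces the correct $c(-\psi)e^{-\varphi}$ factor in the final estimate. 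The multiplier $(\tilde f_{t_2}-f)\in\mathcal F_{z_0}$ on $Z_0$ is preserved because $u$ vanishes on $Z_0$ by the multiplier ideal/extension condition after one enlarges the ideal sheaf $\mathcal I(\varphi+\psi)$ in the $L^2$ problem. The pseudoconvexity and Kähler hypotheses on $\{\psi<-t\}\setminus(X\cup Z)$, together with the locally negligible property of $X$, allow one to transport the solution from $M\setminus(X\cup Z)$ to $M$ without loss of $L^2$ mass.

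The main obstacle, and the distinctive point of this ``II'' paper, is that $c$ is only Lebesgue measurable and $c(t)e^{-t}$ merely decreasing, so the auxiliary $(\eta,s)$ above is not smooth a priori. I would handle this by a two-step regularization: first approximate $c$ from below by a sequence $c_k\in P_{T,M}$ with smooth decreasing $c_k(t)e^{-t}$ and $c_k\nearrow c$ a.e., run the $\bar\partial$-argument for each $c_k$, and then pass to the limit using monotone convergence on the integrals $h_k(t)\to h(t)$ and weak compactness of near-minimizers (the holomorphic limit exists by a normal families / Montel argument together with the uniform $L^2$ bound produced by the key inequality at level $k$). Concavity of $G\circ h_k^{-1}$ survives the limit because concavity is preserved under pointwise limits.

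Finally I would address the boundary behavior separately. For $\lim_{t\to T+0}G(t)=G(T)$: $G$ is decreasing in $t$ (larger sublevel set $\Rightarrow$ smaller infimum cannot hold; actually the minimizer on a smaller set cannot be used on a larger one, so monotonicity goes the other way), so by the just-proved concavity of $G\circ h^{-1}$ on $(\int_{T_1}^T,\int_{T_1}^\infty)$, $G\circ h^{-1}$ is continuous on this open interval and extends upper-semicontinuously to the endpoint $\int_{T_1}^T c(t)e^{-t}dt$; combined with the fact that any near-minimizer $\tilde f_t$ for $t$ slightly above $T$ restricts to a valid competitor for $G(T)$, one obtains $\lim_{t\to T+0}G(t)=G(T)$. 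For $\lim_{t\to+\infty}G(t)=0$: since $G$ is concave and bounded below by $0$ as a function of $r\in(\int_{T_1}^T c(s)e^{-s}ds,\int_{T_1}^\infty c(s)e^{-s}ds)$, and $G\circ h^{-1}$ is not identically $+\infty$ by assumption, its limit as $r$ approaches the right endpoint exists and must be nonnegative; the fact that this limit equals $0$ follows because $\bigcap_t\{\psi<-t\}\subset\{\psi=-\infty\}$ has measure zero, so the defining $L^2$ integral of any fixed admissible $\tilde f$ tends to $0$ by dominated convergence against $c(-\psi)e^{-\varphi}$, which is locally integrable on $M\setminus E$.
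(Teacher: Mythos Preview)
Your overall strategy is the right one and matches the paper: build an $L^2$ extension via a twisted $\bar\partial$-estimate on $\{\psi<-t_1\}$ out of a (near-)minimizer on $\{\psi<-t_0\}$, and deduce concavity from a secant-versus-derivative inequality (Lemma~\ref{derivatives of G} and Lemma~\ref{characterization of concave function} in the paper). But several of your steps, as written, do not close.

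\medskip
\textbf{The two-point inequality is not sharp enough.} An inequality of the form $G(t_1)\le G(t_2)+(h(t_2)-h(t_1))\,C(\tilde f_{t_2})$ with $C$ depending only on the chosen near-minimizer at level $t_2$ gives at best a one-sided Lipschitz bound; it does not yield concavity unless you identify the optimal constant with the \emph{right-derivative} of $G\circ h^{-1}$ at $h(t_2)$. In the paper this identification is exactly the content of Lemma~\ref{derivatives of G}: one applies the $\bar\partial$-lemma with a bump of width $B$ at level $t_0$, lets $B\to 0$, and obtains
\[
\frac{G(t_1)-G(t_0)}{\int_{t_1}^{t_0}c(t)e^{-t}\,dt}\ \le\ \liminf_{B\to 0+}\frac{G(t_0)-G(t_0+B)}{\int_{t_0}^{t_0+B}c(t)e^{-t}\,dt},
\]
which is precisely the criterion of Lemma~\ref{characterization of concave function}. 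Your outline skips this $B\to 0$ passage, and without it concavity does not follow.

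\medskip
\textbf{The boundary limit $\lim_{t\to T+0}G(t)=G(T)$ is argued in the wrong direction.} You write that a near-minimizer $\tilde f_t$ for $t$ slightly above $T$ ``restricts to a valid competitor for $G(T)$''. But $\{\psi<-t\}\subsetneq\{\psi<-T\}$ when $t>T$, so $\tilde f_t$ lives on the \emph{smaller} set and is not a competitor for $G(T)$. The paper instead (Lemma~\ref{semicontinuous}) takes minimizers $F_t$ on the growing domains $\{\psi<-t\}$ as $t\downarrow t_0$, and uses the compactness Lemma~\ref{l:converge} plus a diagonal argument to extract a holomorphic limit on $\{\psi<-t_0\}$ with integral $\le\lim_{t\to t_0+}G(t)$. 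This is what forces $G(t_0)\le\lim_{t\to t_0+}G(t)$; the reverse inequality is automatic from monotonicity.

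\medskip
\textbf{Regularizing $c$ from below is not how the paper proceeds, and your limiting step is not justified.} You propose $c_k\nearrow c$, prove concavity of $G_k\circ h_k^{-1}$, and pass to the limit. But both $G$ and $h$ move with $k$, and you would need $G_k(h_k^{-1}(r))\to G(h^{-1}(r))$ pointwise, which is nontrivial (the domains of definition change, and the dependence of the minimizers on $c_k$ must be controlled). The paper avoids this altogether: the measurable $c$ is regularized \emph{inside} the proof of the basic $\bar\partial$-estimate (Lemma~\ref{lemma2.1}, Step~1 and Step~10 of the appendix), so that Lemma~\ref{lemma 2.2} already holds for measurable $c\in\mathcal P_{T,M}$, and the concavity argument of Section~\ref{sec:Proof-main} runs once, with no outer limit in $c$.
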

	When $M$ is a Stein manifold, Theorem \ref{maintheorem} can be referred to \cite{GY-concavity}. By our recent work on the minimal $L^2$ integrals with boundary points, the assumption  `$\{\psi<-t\}\backslash (X\cup Z)$ is a weakly pseudoconvex K\"ahler manifold for any $t\in\mathbb{R}$' is not necessary for the truth of concavity property of the minimal $L^2$
	integrals, see \cite{GMY-boundary2}.
	
	\begin{Remark}
		\label{infty2}Let $c\in\mathcal{P}_{T,M}$.	If  $\int_{T_1}^{+\infty}c(t)e^{-t}dt=+\infty$ and $f\notin H^{0}(Z_0,
		(\mathcal{O}(K_{M})\otimes\mathcal{F})|_{Z_0})$, then $G(t)=+\infty$ for any $t\geq T$. Thus, when there exists $t \in [T,+\infty)$ satisfying that $G(t)\in(0,+\infty)$, we have $\int_{T_1}^{+\infty}c(t)e^{-t}dt<+\infty$ and $G(\hat{h}^{-1}(r))$ is concave with respect to  $r\in (0,\int_{T}^{+\infty}c(t)e^{-t}dt)$, where $\hat{h}(t)=\int_{t}^{+\infty}c(l)e^{-l}dl$. We give a proof of this remark in section \ref{sec:4.2}.
	\end{Remark}

	Let $c(t)$ be a nonnegative measurable function on $(T,+\infty)$. Set
	\begin{equation}\nonumber
		\begin{split}
			\mathcal{H}^2(c,t)=\{\tilde{f}:\int_{ \{ \psi<-t\}}|\tilde{f}|^2e^{-\varphi}c(-\psi)<+\infty,\  \tilde{f}\in
			H^0(\{\psi<-t\},\mathcal{O} (K_M)  ) \\
			\& (\tilde{f}-f)\in
			H^0(Z_0 ,(\mathcal{O} (K_M) \otimes \mathcal{F})|_{Z_0} )\},
		\end{split}
	\end{equation}
	where $t\in[T,+\infty)$.
	
	As a corollary of Theorem \ref{maintheorem}, we give a necessary condition for the concavity degenerating to linearity (related results can be referred to \cite{GM}, \cite{GY-concavity} and \cite{xu-phd}).
	\begin{Corollary}
		\label{linear case of G}
		Let $c(t)\in \mathcal{P}_{T,M}$, if $G(t)\in(0,+\infty)$ for some $t\ge T$ and $G(\hat{h}^{-1}(r))$ is linear with respect to $r\in[0,\int_T^{+\infty}c(s)e^{-s}ds)$, where $\hat{h}(t)=\int_{t}^{+\infty}c(l)e^{-l}dl$, then there exists a unique holomorphic $(n,0)$ form $F$ on $M$ satisfying $(F-f)\in
		H^0(Z_0 ,(\mathcal{O} (K_M) \otimes \mathcal{F})|_{Z_0})$ and $G(t;c)=\int_{\{\psi<-t\}}|F|^2e^{-\varphi}c(-\psi)$ for any $t\ge T$.
		
		Furthermore
		\begin{equation}
			\begin{split}
				\int_{\{-t_1\le\psi<-t_2\}}|F|^2e^{-\varphi}a(-\psi)=\frac{G(T_1;c)}{\int_{T_1}^{+\infty}c(t)e^{-t}dt}
				\int_{t_2}^{t_1}a(t)e^{-t}dt
				\label{other a also linear}
			\end{split}
		\end{equation}
		for any nonnegative measurable function $a$ on $(T,+\infty)$, where $T\le t_2<t_1\le+\infty$ and $T_1\in (T,+\infty)$.
	\end{Corollary}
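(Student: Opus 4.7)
My plan is to pin down $G$ explicitly from the linearity and the boundary data of Theorem \ref{maintheorem}, produce a global minimizer $F$ on $M$ by weak $L^{2}$ compactness, transfer the minimizer from the largest sublevel to every smaller sublevel via a sharp $L^{2}$ extension, and finally upgrade the resulting annular identity from the gain $c$ to an arbitrary nonnegative $a$.

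\textbf{Step 1: Explicit $G$ and the existence of $F$.} Theorem \ref{maintheorem} supplies $\lim_{t\to+\infty}G(t)=0$ and $\lim_{t\to T+0}G(t)=G(T)$. Since $\hat{h}(+\infty)=0$ and $G\circ\hat{h}^{-1}$ is linear on $[0,\int_{T}^{+\infty}c(l)e^{-l}\,dl)$, the line passes through the origin, so
$$G(t)=\frac{G(T_{1})}{\int_{T_{1}}^{+\infty}c(l)e^{-l}\,dl}\,\hat{h}(t)=:A\,\hat{h}(t),\qquad t\in[T,+\infty),$$
and in particular $G(T)<+\infty$. Take a minimizing sequence in $\mathcal{H}^{2}(c,T)$. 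Clause $(2)$ of the definition of $\mathcal{P}_{T,M}$ gives $e^{-\varphi}c(-\psi)$ a positive lower bound on each compact subset of $M\setminus E$, so the sequence is uniformly $L^{2}$ on compacts of $\{\psi<-T\}\setminus(X\cup Z)$; extract a subsequence converging locally uniformly to a holomorphic $(n,0)$-form, extend across $X$ by the local negligibility in condition $(A)$ and across $Z$ via the $L^{2}$ removable-singularity theorem to obtain $F$, and invoke Fatou to conclude $F\in\mathcal{H}^{2}(c,T)$ with $\int_{\{\psi<-T\}}|F|^{2}e^{-\varphi}c(-\psi)=G(T)$. Uniqueness of $F$ is the parallelogram identity applied to any two minimizers $F,F'$: the average $(F+F')/2$ is again admissible (since $\mathcal{F}|_{Z_0}$ is a module), minimality forces $\|(F+F')/2\|^{2}=G(T)$, and then $\|F-F'\|^{2}=0$.

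\textbf{Step 2: $F$ realizes $G(t)$ at every level.} For each $t>T$, Step 1 applied at level $t$ yields a unique minimizer $F_{t}^{*}\in\mathcal{H}^{2}(c,t)$ realizing $G(t)$. The sharp $L^{2}$ extension embedded in the proof of Theorem \ref{maintheorem} (an Ohsawa-Takegoshi-Guan-Zhou type inequality, transferred to the weakly pseudoconvex K\"ahler setting by exhausting with smoothly bounded Stein subdomains) produces an extension $\tilde F\in\mathcal{H}^{2}(c,T)$ of $F_{t}^{*}$ satisfying
$$\int_{\{\psi<-T\}}|\tilde F|^{2}e^{-\varphi}c(-\psi)\le G(t)+A\int_{T}^{t}c(l)e^{-l}\,dl=G(T).$$
Hence $\tilde F$ also realizes $G(T)$; uniqueness from Step 1 forces $\tilde F=F$, and restricting to $\{\psi<-t\}$ gives $F|_{\{\psi<-t\}}=F_{t}^{*}$, so $\int_{\{\psi<-t\}}|F|^{2}e^{-\varphi}c(-\psi)=G(t)$ for every $t\in[T,+\infty)$. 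Subtracting at $t_{2}<t_{1}$ yields
$$\int_{\{-t_{1}\le\psi<-t_{2}\}}|F|^{2}e^{-\varphi}c(-\psi)=A\int_{t_{2}}^{t_{1}}c(l)e^{-l}\,dl.$$

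\textbf{Step 3: From $c$ to an arbitrary $a$, and the main obstacle.} Let $\nu$ be the Borel measure on $(T,+\infty)$ obtained by pushing the measure $|F|^{2}e^{-\varphi}$ forward under $-\psi$. The preceding display reads $c\,d\nu=A\,c(l)e^{-l}\,dl$ as Borel measures on $(T,+\infty)$; since $c>0$ by the definition of $\mathcal{P}_{T,M}$, this forces $d\nu=A\,e^{-l}\,dl$, and integrating any nonnegative measurable $a$ against $d\nu$ produces \eqref{other a also linear}. The one genuinely delicate point is the sharp extension in Step 2: matching the extension error exactly with $A\int_{T}^{t}c(l)e^{-l}\,dl$ is what converts linearity into uniqueness, and its justification demands extracting the precise $L^{2}$ estimate with weight $c(-\psi)$ that underpins Theorem \ref{maintheorem} and running it along an exhaustion of the weakly pseudoconvex K\"ahler manifold $M\setminus(X\cup Z)$ by smoothly bounded Stein subdomains.
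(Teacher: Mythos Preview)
Your Step 2 contains a genuine gap. The $L^{2}$ machinery in Lemma \ref{lemma 2.2} (which is what underlies Theorem \ref{maintheorem}) does \emph{not} produce a literal extension: given the minimizer $F_{t}^{*}$ on $\{\psi<-t\}$, the output $\tilde F$ on $\{\psi<-T\}$ only satisfies $(\tilde F-F_{t}^{*})\in H^{0}(Z_{0},(\mathcal{O}(K_{M})\otimes\mathcal{F})|_{Z_{0}})$ together with the integral bound; it need not satisfy $\tilde F|_{\{\psi<-t\}}=F_{t}^{*}$. Consequently, even after you deduce $\tilde F=F$ by uniqueness at level $T$, the sentence ``restricting to $\{\psi<-t\}$ gives $F|_{\{\psi<-t\}}=F_{t}^{*}$'' is unjustified. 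Trying to rescue this with the orthogonality of Lemma \ref{existence of F} leads back to the annular identity you are attempting to prove, so the argument becomes circular.

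The paper closes this gap differently. It returns to the chain of inequalities \eqref{eq:211106b}--\eqref{eq:211106c} in the proof of Lemma \ref{derivatives of G}: linearity forces \emph{every} inequality there to be an equality, and in particular
\[
\int_{\{\psi<-t_{0}\}}|F_{1}-F_{t_{0}}|^{2}e^{-\varphi}\bigl(e^{-\psi-t_{0}}c(t_{0})-c(-\psi)\bigr)=0.
\]
Since $c(s)e^{-s}$ is strictly smaller than $c(t_{0})e^{-t_{0}}$ for $s$ large (finiteness of $\int c(t)e^{-t}\,dt$), this forces $F_{1}=F_{t_{0}}$ on a deep sublevel, hence on all of $\{\psi<-t_{0}\}$ by the identity principle. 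That is the extra information your Step 2 is missing.

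Your Step 3, on the other hand, is correct and is a genuine simplification of the paper's argument. Once the annular identity with weight $c$ is in hand, the pushforward measure $\nu=(-\psi)_{*}(|F|^{2}e^{-\varphi})$ is locally finite on $(T,+\infty)$ (because $c$ is bounded below on compact intervals), the equality $c\,d\nu=A\,c(l)e^{-l}\,dl$ on half-open intervals extends to Borel sets by $\sigma$-additivity, and dividing by $c>0$ gives $d\nu=A\,e^{-l}\,dl$ directly. This replaces the paper's lengthier discretization via partitions and removal of discontinuity sets $U_{k}$.
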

	
	\begin{Remark}
		\label{rem:linear}
		If $\mathcal{H}^2(\tilde{c},t_0)\subset\mathcal{H}^2(c,t_0)$ for some $t_0\ge T$, we have
		\begin{equation}
			\begin{split}
				G(t_0;\tilde{c})=\int_{\{\psi<-t_0\}}|F|^2e^{-\varphi}\tilde{c}(-\psi)=\frac{G(T_1;c)}{\int_{T_1}^{+\infty}c(t)e^{-t}dt}
				\int_{t_0}^{+\infty}\tilde{c}(s)e^{-s}ds,
				\label{other c also linear}
			\end{split}
		\end{equation}
		where $\tilde{c}$ is a nonnegative measurable function on $(T,+\infty)$.
	\end{Remark}

	\subsection{Equality in optimal jets $L^2$ extension on open Riemann surfaces}
	\
	
	The celebrated $L^2$ extension theorem which was firstly established by Ohsawa-Takegoshi \cite{OT87} has been widely studied and used in the research of  several complex variables and complex geometry by many mathematicians, e.g., Berndtsson, Demailly, Ohsawa, Siu, et al (see \cite{berndtsson1996,berndtsson annals,Demaillyshm,DemaillyManivel, Demailly15,Ohsawa2,Ohsawa3,Ohsawa4,Ohsawa5,berndtsson paun,DHP,HPS}).
	
	Finding optimal constant in  the $L^2$ extension theorem is a natural question and is highly related to the so-called Suita conjecture (see \cite{suita72} and see Ohsawa \cite{Ohsawa5} for the relation between optimal constant in the $L^2$ extension theorem and the Suita conjecture).  Guan-Zhou-Zhu (see \cite{ZGZ}, see also \cite{GZZCRMATH}) firstly used a method of undetermined functions to study optimal constant in  the $L^2$ extension theorem. For bounded
	pseudoconvex domains in $\mathbb{C}^n$, Blocki \cite{Blocki-inv} developed the equation of undetermined functions, and got the optimal version of Ohsawa-Takegoshi's $L^2$ extension theorem in \cite{OT87} which deduced the inequality part of Suita conjecture for planar domains. Using undetermined functions method, Guan-Zhou (see \cite{GZsci}, see also \cite{guan-zhou CRMATH2012}) proved the optimal $L^2$ extension theorem with negligible weight on Stein manifolds, and obtained the inequality part of Suita conjecture for open Riemann surfaces, which is the original form of the inequality part of Suita conjecture in \cite{suita72}. In \cite{guan-zhou13ap}, Guan-Zhou established an $L^2$ extension theorem with optimal estimate in a general setting on Stein manifolds, which gave various optimal versions of $L^2$ extension theorem. As an application, Guan-Zhou \cite{guan-zhou13ap} proved the equality part of Suita conjecture, which finished the proof of Suita conjecture. In \cite{ZZ2018}, Zhou-Zhu proved an optimal $L^2$ extension theorem on weakly pseudoconvex K\"ahler manifolds.
	
	Let us introduce the Suita conjecture.
	Let $M=\Omega$ be an open Riemann surface admitting a nontrivial Green function $G_{\Omega}$.  Let $w$ be a local coordinate on a neighborhood $V_{z_0}$ of $z_0\in\Omega$ satisfying $w(z_0)=0$.
	Let $c_{\beta}(z)$ be the logarithmic capacity (see \cite{S-O69}) on $\Omega$ which is defined by
	$$c_{\beta}(z_0):=\exp \lim\limits_{z\to z_0}(G_{\Omega}(z,z_0)-\log|w(z)|).$$
	Let $B_{\Omega}(z_0)$ be the Bergman kernel function on $\Omega$.
	\begin{Conjecture}
		[Suita conjecture \cite{suita72}]
		$\pi B_{\Omega}(z_0)\ge(c_{\beta}(z_0))^2$ holds, and the equality holds if and only if $\Omega$ is
		conformally equivalent to the unit disc less a (possible) closed set of inner capacity zero.
	\end{Conjecture}

	Let us focus on the equality part of Suita conjecture.
Following from Ohsawa's \cite{Ohsawa5} observation, the equality part of Suita conjecture is a characterization for the equality in optimal $L^2$ extension problem:
	
	\emph{$\inf\{\int_{\Omega}|F|^2: F$ is a holomorphic  $(1,0)$ form on $\Omega$ with $F(z_0)=dw\}= \frac{2\pi}{(c_{\beta}(z_0))^2}$ holds if and only if $\Omega$ is
		conformally equivalent to the unit disc less a (possible) closed set of inner capacity zero.}

	In \cite{popovici05}, Popovici  established a $k$-jets $L^2$ extension theorem, which is a generalization of Ohsawa-Takegoshi's $L^2$ extension theorem. In \cite{Demailly15}, Demailly  proved an $L^2$ extension theorem from a possibly non-reduced subvariety. Hosono \cite{hosono} proved an optimal $k$-jets $L^2$ extension theorem for bounded pseudoconvex domain with Green type  function and trivial gain ($c\equiv 1$).  Rao-Zhang \cite{RaoZhang 21} proved an $L^2$ extension theorem for jets with variable denominators. Zhou-Zhu \cite{ZhouZhuJAG} proved an optimal $L^2$ extension theorem from  non-reduced subvariety on  weakly pseudoconvex K\"ahler manifolds for smooth gain which optimized Demailly's result in \cite{Demailly15} (for the Stein case see also Li-Xu-Zhou \cite{LiXuZhou23}).
	
The equality part of Suita conjecture corresponds to the equality in the optimal 0-jet $L^2$ extension problem on open Riemann surfaces with trivial weights $\varphi\equiv0$ and trivial gain $c\equiv1$.
	Then it is natural to ask
	
	\begin{Problem}\label{pb:equality}
		Can one characterize  the equality in optimal $k$-jets $L^2$ extension problem on open Riemann surfaces,
		where $k$ is a nonnegative integer?
	\end{Problem}

	Assume that $k=0$. When the weights $\varphi$ are harmonic and gain is trivial ($c\equiv1$),
	Guan-Zhou \cite{guan-zhou13ap} gave an affirmative answer to 0-jet version of Problem \ref{pb:equality}, which was called extended Suita conjecture conjectured by Yamada \cite{Yamada}.
	When the weights $\varphi$ are subharmonic and gain is smooth, using the concavity of minimal $L^2$ integrals, Guan-Mi \cite{GM} gave an affirmative answer to 0-jet version of Problem \ref{pb:equality}.
	When the weights $\varphi$ may not be subharmonic and gain is Lebesgue measurable, using the concavity of minimal $L^2$ integrals, Guan-Yuan \cite{GY-concavity} gave an affirmative answer to 0-jet version of Problem \ref{pb:equality}.

	In the present article, we give a characterization for the equality in  optimal k-jets $L^2$ extension problem on open Riemann surfaces \emph{by using the concavity property of minimal $L^2$ integrals,} which gives an affirmative answer to Problem \ref{pb:equality}.

	To state our results, we recall some notations (see \cite{OF81}, see also \cite{guan-zhou13ap,GY-concavity}).
	Let $\Omega$ be an open Riemann surface admitting a nontrivial Green function $G_{\Omega}$, and let $z_0\in\Omega$.
	Let $p:\Delta\rightarrow\Omega$ be the universal covering from unit disc $\Delta$ to $\Omega$.
	we call the holomorphic function $f$ (resp. holomorphic $(1,0)$ form $F$) on $\Delta$ a multiplicative function (resp. multiplicative differential (Prym differential)),
	if there is a character $\chi$, which is the representation of the fundamental group of $\Omega$, such that $g^{\star}f=\chi(g)f$ (resp. $g^{\star}(F)=\chi(g)F$),
	where $|\chi|=1$ and $g$ is an element of the fundamental group of $\Omega$. Denote the set of such kinds of $f$ (resp. $F$) by $\mathcal{O}^{\chi}(\Omega)$ (resp. $\Gamma^{\chi}(\Omega)$).
	
	It is known that for any harmonic function $u$ on $\Omega$,
	there exists a $\chi_{u}$ and a multiplicative function $f_u\in\mathcal{O}^{\chi_u}(\Omega)$,
	such that $|f_u|=p^{\star}e^{u}$.
	If $u_1-u_2=\log|f|$, then $\chi_{u_1}=\chi_{u_2}$,
	where $u_1$ and $u_2$ are harmonic functions on $\Omega$ and $f$ is a holomorphic function on $\Omega$.
	Recall that for the Green function $G_{\Omega}(z,z_0)$,
	there exists a $\chi_{z_0}$ and a multiplicative function $f_{z_0}\in\mathcal{O}^{\chi_{z_0}}(\Omega)$, such that $|f_{z_0}(z)|=p^{\star}e^{G_{\Omega}(z,z_0)}$ (see \cite{suita72}).
	
	We present the affirmative answer to Problem \ref{pb:equality} as follows.
	
	\begin{Theorem}\label{thm:1d-extension}
		Let $k$ be a nonnegative integer. Let $\psi$ be a negative  subharmonic function on $\Omega$ satisfying that   $a=\frac{1}{2}v(dd^{c}\psi,z_0)>0$. Let $\varphi$ be a Lebesgue measurable function on $\Omega$  such that $\varphi+\psi$ is subharmonic on $\Omega$, $\frac{1}{2}v(dd^{c}(\varphi+\psi),z_0)=k+1$ and $\alpha:=(\varphi+\psi-2(k+1)G_{\Omega}(\cdot,z_0))(z_0)>-\infty$. Let $c(t)$ be a positive measurable function on $(0,+\infty)$ satisfying $c(t)e^{-t}$ is decreasing on $(0,+\infty)$ and $\int_{0}^{+\infty}c(t)e^{-t}dt<+\infty$.
		
		Let $w$ be a local coordinate on a neighborhood $V_{z_0}$ of $z_0\in\Omega$ satisfying $w(z_0)=0$, and let $f=w^kdw$ be a holomorphic $(1,0)$ form on $V_{z_0}$. Let $I$ be an ideal of $\mathcal{O}_{z_0}$, which is generated by $w$. Then there exists a holomorphic $(1,0)$ form $F$ on $\Omega$ such that $(F-f,z_0)\in I^{k+1}\otimes \mathcal{O}(K_{\Omega})_{z_0}$ and
		\begin{equation}
			\label{eq:210902a}
			\int_{\Omega}|F|^2e^{-\varphi}c(-\psi)\leq(\int_0^{+\infty}c(t)e^{-t}dt)\frac{2\pi e^{-\alpha}}{a(c_{\beta}(z_0))^{2(k+1)}}.
		\end{equation}
		
		Moreover, equality $(\int_0^{+\infty}c(t)e^{-t}dt)\frac{2\pi e^{-\alpha}}{a(c_{\beta}(z_0))^{2(k+1)}}=\inf\{\int_{\Omega}|\tilde{F}|^2e^{-\varphi}c(-\psi):\tilde{F}$ is a holomorphic $(1,0)$ form such that $(\tilde{F}-f,z_0)\in I^{k+1}\otimes \mathcal{O}(K_{\Omega})_{z_0}$$\}$ holds if and only if the following statements hold:
		
		$(1)$ $\varphi+\psi=2\log|g|+2(k+1)G_{\Omega}(\cdot,z_0)+2u$, where $g$ is a holomorphic function on $\Omega$ such that $g(z_0)\not=0$ and $u$ is a harmonic function on $\Omega$;
		
		$(2)$  $\psi=2aG_{\Omega}(\cdot,z_0)$ on $\Omega$;
		
		$(3)$ $\chi_{-u}=(\chi_{z_0})^{k+1}$, where $\chi_{-u}$ and $\chi_{z_0}$ are the  characters associated to the functions $-u$ and $G_{\Omega}(\cdot,z_0)$ respectively.
	\end{Theorem}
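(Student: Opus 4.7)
The plan is to apply the concavity machinery of Theorem \ref{maintheorem} and Corollary \ref{linear case of G} with $M=\Omega$, $X=\emptyset$, $Z=\{\psi=-\infty\}$, $Z_0=\{z_0\}$, $\mathcal{F}_{z_0}=I^{k+1}$, and the given $\psi$, $\varphi$, $c$, $f=w^k\,dw$. An open Riemann surface admitting a nontrivial Green function is Stein, so condition $(A)$ holds. Write $T=-\sup_\Omega\psi\ge 0$, $\hat h(t)=\int_t^{+\infty}c(l)e^{-l}\,dl$, and let $G(t)$ be the resulting minimal $L^2$ integral. Remark \ref{infty2} gives that $G(\hat h^{-1}(r))$ is concave on $(0,\hat h(T))$ with $\lim_{t\to+\infty}G(t)=0$; these two facts together force $t\mapsto G(t)/\hat h(t)$ to be non-decreasing, hence
\[
  G(T)\le \hat h(T)\cdot \lim_{t\to+\infty}\frac{G(t)}{\hat h(t)}.
\]

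To control the right-hand limit I would use as a test form $\tilde f=w^k\,dw$ on the component $U_t$ of $\{\psi<-t\}$ containing $z_0$ (which shrinks into $V_{z_0}$ for large $t$) and $\tilde f=0$ on the other components, which is legitimate because the jet condition is imposed only at $z_0$. The hypotheses on the Lelong numbers and on $\alpha$ give $\psi\sim 2a\log|w|$ and $\varphi+\psi\sim 2(k+1)\log|w|+2(k+1)\log c_\beta(z_0)+\alpha$ near $z_0$; the polar substitution $s=-2a\log|w|$ then yields
\[
  \int_{U_t}|\tilde f|^2 e^{-\varphi}c(-\psi)=(1+o(1))\,\frac{2\pi e^{-\alpha}}{a(c_\beta(z_0))^{2(k+1)}}\,\hat h(t).
\]
Combined with $\hat h(T)\le \int_0^{+\infty}c(l)e^{-l}\,dl$, this produces \eqref{eq:210902a}, and a standard weak-$L^2$ compactness argument supplies the minimizer $F$.

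For the equality direction, equality in \eqref{eq:210902a} forces $T=0$ and $G(t)/\hat h(t)$ to be constant, so $G(\hat h^{-1}(r))$ is linear. Corollary \ref{linear case of G} and Remark \ref{rem:linear} then produce a unique minimizer $F$ satisfying
\[
  \int_{\{-t_1\le\psi<-t_2\}}|F|^2 e^{-\varphi}a(-\psi)=\frac{2\pi e^{-\alpha}}{a(c_\beta(z_0))^{2(k+1)}}\int_{t_2}^{t_1}a(s)e^{-s}\,ds
\]
for every nonnegative measurable $a$. Adapting the $0$-jet Suita-type arguments of \cite{guan-zhou13ap,GY-concavity}, this pushforward identity compared against the model weight $2aG_\Omega(\cdot,z_0)$ via the co-area formula forces condition (2); substituting (2) back and analyzing the Riesz measure of $\varphi+\psi-2(k+1)G_\Omega(\cdot,z_0)$ using the divisor of $F$ at $z_0$ shows this function is $2\log|g|+2u$ with $g$ a nowhere-vanishing holomorphic function and $u$ harmonic, giving (1); finally, writing the extremal $F$ on the universal cover in terms of $f_{z_0}$, $g$, and $f_{-u}$, single-valuedness on $\Omega$ demands the character relation $\chi_{-u}=(\chi_{z_0})^{k+1}$, which is (3). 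Conversely, once (1)--(3) hold, this explicit model produces a single-valued holomorphic $(1,0)$ form $F$ on $\Omega$ with the correct $k$-jet at $z_0$ after normalization, and a direct change-of-variables computation recovers equality. The main obstacle is the extraction of (1), (2), (3) from the linearity identity for $k$-jets: the $0$-jet multiplicative-function bookkeeping of \cite{GY-concavity} has to be promoted to the multiplicative-differential setting, and the precise character exponent $k+1$ must be tracked through the residue calculation at $z_0$.
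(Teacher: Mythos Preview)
Your overall strategy (concavity of $G(t)\Rightarrow$ limit at $t\to\infty$ for the existence bound, and linearity $\Rightarrow$ structure for the equality) is close in spirit to the paper, but there is a genuine gap in the way you treat $\psi$. The Lelong-number hypothesis $\frac12 v(dd^c\psi,z_0)=a$ only gives $\psi\le 2a\log|w|+O(1)$ near $z_0$; there is \emph{no} lower control, so your claimed asymptotic ``$\psi\sim 2a\log|w|$'' is unjustified, and the component $U_t$ of $\{\psi<-t\}$ at $z_0$ need not shrink into $V_{z_0}$ (the polar set of $\psi$ can accumulate at $z_0$). Worse, for Theorem~\ref{maintheorem} to apply you need $c\in\mathcal P_{T,M}$, i.e.\ $e^{-\varphi}c(-\psi)$ bounded below on compacta of $M\setminus E$ with $E$ contained in an \emph{analytic} set; since $\{\psi=-\infty\}$ is only polar and $e^{-\varphi}c(-\psi)$ can vanish there, this fails in general. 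So neither the limit computation nor the invocation of the concavity theorem goes through as written.

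The paper repairs both issues by a preliminary reduction. After rescaling $(\varphi,\psi,c)$ to make $a=k+1$, one has $\psi\le 2(k+1)G_\Omega(\cdot,z_0)$ by Lemma~\ref{l:green}, and since $c(t)e^{-t}$ is decreasing this yields the pointwise inequality
\[
e^{-\varphi}c(-\psi)\le e^{-(\varphi+\psi-2(k+1)G_\Omega)}\,c\bigl(-2(k+1)G_\Omega\bigr),
\]
which reduces the existence bound to the case $\psi=2(k+1)G_\Omega$. In that case $\{\psi=-\infty\}=\{z_0\}$ is analytic, the sublevel sets are relatively compact discs (Lemma~\ref{l:G-compact}), and the key residue integral can be computed exactly. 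The paper actually proves existence by applying Lemma~\ref{lemma2.1} directly (with smoothing of $\varphi$ and exhaustion of $\Omega$) rather than through $G(t)$, but after the reduction your concavity-plus-limit route would also work. For the equality part, the paper again first forces $\psi=2(k+1)G_\Omega$ via Lemma~\ref{l:psi<G} (the inequality above must be equality a.e.), and then invokes Theorem~\ref{thm:1-d-linear} from \cite{GY-concavity} as a black box with $\mathcal F_{z_0}=\mathcal I(2(k+1)G_\Omega)_{z_0}$; the translation from its output $\varphi+\psi=2\log|\tilde g|+2G_\Omega+2\tilde u$ with $ord_{z_0}(\tilde g)=k$ to the form stated in Theorem~\ref{thm:1d-extension} is a short algebraic manipulation using a holomorphic $f_1$ with $|f_1|=e^{\tilde u+G_\Omega}$. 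So you do not need to redo the multiplicative-differential bookkeeping from scratch---the $k$-jet character relation $\chi_{-u}=\chi_{z_0}^{k+1}$ drops out of $u=(k+1)\tilde u$.
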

	
	In fact, for any $z_0\in\Omega$ and any $k\geq0$ be a integer, there exists a harmonic function $u$ satisfying $\chi_{-u}=(\chi_{z_0})^{k+1}$ (see Lemma \ref{l:chi} for details).
	
	\begin{Remark}
		\label{r:min}	
		When the  three statements in Theorem \ref{thm:1d-extension} hold, $p_{\star}(f_uf_{z_0}^kdf_{z_0})$ is a holomorphic $(1,0)$ form on $\Omega$ and there exists a holomorphic function $h_1(z)$ on $V_{z_0}$ such that $gp_{\star}(f_uf_{z_0}^kdf_{z_0})=h_1w^{k}dw$, where $g$ is the holomorphic function in statement $(1)$. Note that $|h_1(z_0)|=e^{\frac{\alpha}{2}}c_{\beta}^{k+1}(z_0)$, and denote $c_0=\frac{1}{h_1(z_0)}$. Then  $c_0gp_{\star}(f_uf_{z_0}^kdf_{z_0})$ is the unique holomorphic $(1,0)$ form $F$ on $\Omega$ satisfying $({F}-f,z_0)\in I^{k+1}\otimes \mathcal{O}(K_{\Omega})_{z_0}$ and $\int_{\Omega}|F|^2e^{-\varphi}c(-\psi)\leq(\int_0^{+\infty}c(t)e^{-t}dt)\frac{2\pi e^{-\alpha}}{a(c_{\beta}(z_0))^{2(k+1)}}$. We prove the present remark in Section \ref{sec:3.3}.\end{Remark}

	Let $h$ be a Lebesgue measurable function on $\Omega$ such that $h+a_1G_{\Omega}(\cdot,z_0)$ is subharmonic on $\Omega$ and $\alpha_1:=(h+a_1G_{\Omega}(\cdot,z_0))(z_0)>-\infty$, where $a_1>-k-1$ is a real number.
	Denote that $\rho:=e^{-2h}$.
	
	Denote that
	$$K^{(k)}_{\Omega,\rho}(z_0):=\frac{2}{\inf\{\int_{\Omega}|\tilde{F}|^2\rho:\tilde{F}\in H^0(\Omega,\mathcal{O}(K_{\Omega}))\,\&\,(\tilde{F}-f,z_0)\in I^{k+1}\otimes \mathcal{O}(K_{\Omega})_{z_0}\}}.$$
	Especially, when $\rho=1$ and $\Omega$ is a planar domain, $K^{(k)}_{\Omega,\rho}(z_0)$ degenerates to the Bergman kernel for higher derivatives (see \cite{Berg70,Blo18}).

	Choosing $c\equiv1$ and $\psi=2(a_1+k+1)G_{\Omega}(\cdot,z_0)$, Theorem \ref{thm:1d-extension} degenerates to the following
	
	\begin{Theorem}
		\label{thm:k-suita}
		Let $\Omega$ be an open Riemann surface admitting a nontrivial Green function $G_{\Omega}$, and let $z_0\in\Omega$ and $k$ is a nonnegative integer. Then we have
		\begin{equation}
			\label{eq:210902b}
			(c_{\beta}(z_0))^{2(k+1)}\leq\frac{\pi}{a_1+k+1}e^{-2\alpha_1}K^{(k)}_{\Omega,\rho}(z_0).
		\end{equation}
		Moreover,  equality $(c_{\beta}(z_0))^{2(k+1)}=\frac{\pi}{a_1+k+1}e^{-2\alpha_1}K^{(k)}_{\Omega,\rho}(z_0)$ holds if and only if the following statements hold:
		
		$(1)$ $h=\log|g|-a_1G_{\Omega}(\cdot,z_0)+u$, where $g$ is a holomorphic function on $\Omega$ such that $g(z_0)\not=0$ and $u$ is a harmonic function on $\Omega$;
		
		$(2)$ $\chi_{-u}=(\chi_{z_0})^{k+1}$, where $\chi_{-u}$ and $\chi_{z_0}$ are the  characters associated to the functions $-u$ and $G_{\Omega}(\cdot,z_0)$ respectively.
	\end{Theorem}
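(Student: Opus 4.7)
The plan is to obtain Theorem \ref{thm:k-suita} as a direct specialization of Theorem \ref{thm:1d-extension}. I will take $\psi := 2(a_1+k+1)G_{\Omega}(\cdot,z_0)$, $\varphi := 2h$, and $c\equiv 1$ on $(0,+\infty)$. With these choices $e^{-\varphi}c(-\psi) = e^{-2h} = \rho$ and $\int_{0}^{+\infty} c(t)e^{-t}\,dt = 1$, so the left-hand side of (\ref{eq:210902a}) becomes $\int_\Omega |F|^2\rho$, while the right-hand side will collapse to the quantity governing (\ref{eq:210902b}) once one identifies $\alpha = 2\alpha_1$.

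The main bookkeeping step is to verify the hypotheses of Theorem \ref{thm:1d-extension}. Because $G_{\Omega}(\cdot,z_0)<0$ and $a_1+k+1>0$, the function $\psi$ is negative and subharmonic with $a=\tfrac{1}{2}v(dd^c\psi, z_0) = a_1+k+1>0$. Writing $\tilde h := h + a_1 G_{\Omega}(\cdot,z_0)$, which is subharmonic by hypothesis and satisfies $\tilde h(z_0)=\alpha_1>-\infty$, we have $\varphi+\psi = 2\tilde h + 2(k+1)G_{\Omega}(\cdot,z_0)$, a sum of subharmonic functions and hence subharmonic. The one nontrivial input is that $v(dd^c\tilde h,z_0)=0$: for a subharmonic function a strictly positive Lelong number at a point forces the value at that point to be $-\infty$, contradicting $\tilde h(z_0)>-\infty$. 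Consequently $\tfrac{1}{2}v(dd^c(\varphi+\psi),z_0) = 0 + (k+1) = k+1$ and $\alpha := (\varphi+\psi-2(k+1)G_{\Omega}(\cdot,z_0))(z_0) = 2\tilde h(z_0) = 2\alpha_1 > -\infty$. The constraints on $c$ are trivial.

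Applying Theorem \ref{thm:1d-extension} then produces a holomorphic $(1,0)$ form $F$ on $\Omega$ with $(F-f,z_0)\in I^{k+1}\otimes \mathcal{O}(K_{\Omega})_{z_0}$ and $\int_\Omega|F|^2\rho \leq \frac{2\pi e^{-2\alpha_1}}{(a_1+k+1)(c_\beta(z_0))^{2(k+1)}}$. Since $F$ competes in the infimum defining $K^{(k)}_{\Omega,\rho}(z_0)$, this gives $\frac{2}{K^{(k)}_{\Omega,\rho}(z_0)} \leq \frac{2\pi e^{-2\alpha_1}}{(a_1+k+1)(c_\beta(z_0))^{2(k+1)}}$, which rearranges to inequality (\ref{eq:210902b}).

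For the equality statement, equality in (\ref{eq:210902b}) is equivalent to equality in (\ref{eq:210902a}) under our specialized data, hence by Theorem \ref{thm:1d-extension} to its three characterizing conditions. Condition (2) there, $\psi=2aG_\Omega(\cdot,z_0)$ with $a=a_1+k+1$, is automatic by the choice of $\psi$. Substituting $\varphi+\psi = 2h+2(a_1+k+1)G_\Omega(\cdot,z_0)$ into condition (1) and solving for $h$ yields $h = \log|g| - a_1 G_\Omega(\cdot,z_0)+u$, which is condition (1) of Theorem \ref{thm:k-suita}; condition (3) transfers verbatim to condition (2) of Theorem \ref{thm:k-suita}. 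The only genuinely nontrivial ingredient in the whole argument is the Lelong-number vanishing noted above; everything else is substitution and rearrangement.
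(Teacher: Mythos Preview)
Your proof is correct and follows exactly the approach the paper takes: the paper simply states that choosing $c\equiv 1$ and $\psi=2(a_1+k+1)G_{\Omega}(\cdot,z_0)$ in Theorem~\ref{thm:1d-extension} yields Theorem~\ref{thm:k-suita}, and you have carefully filled in the verification of the hypotheses (including the Lelong-number vanishing for $\tilde h$) and the translation of the equality conditions.
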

	
	Note that $\chi_{z_0}=1$ if and only if $\Omega$ is
	conformally equivalent to the unit disc less a (possible) closed set of inner capacity zero (see \cite{suita72}).
When $k=0$, $a_1=0,$ and $h\equiv0$, Theorem \ref{thm:k-suita} is the solution of Suita conjecture (see \cite{suita72,Blocki-inv,guan-zhou CRMATH2012,guan-zhou13ap}). Theorem \ref{thm:k-suita} implies the following result.
	\begin{Corollary}
		If $(c_{\beta}(z_0))^{2(k+1)}=\frac{\pi}{a_1+k+1}e^{-2\alpha_1}K^{(k)}_{\Omega,\rho}(z_0)$ holds for some $k\geq0$, then for any $n\geq1$,
		$$(c_{\beta}(z_0))^{2n(k+1)}=\frac{\pi}{na_1+nk+n}e^{-2n\alpha_1}K^{(nk+n-1)}_{\Omega,\rho^n}(z_0)$$
		holds.
	\end{Corollary}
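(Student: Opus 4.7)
The plan is to invoke Theorem \ref{thm:k-suita} twice: once as the ``equality $\Rightarrow$ characterization'' direction applied to the given data $(k,a_1,h,\alpha_1,\rho)$, and once as the ``characterization $\Rightarrow$ equality'' direction applied to the rescaled data $(k',a_1',h',\alpha_1',\rho^n):=(nk+n-1,\, na_1,\, nh,\, n\alpha_1,\, \rho^n)$.

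First, I would unpack the hypothesis. The assumed equality, together with Theorem \ref{thm:k-suita}, furnishes a factorization
\[
h=\log|g|-a_1 G_{\Omega}(\cdot,z_0)+u
\]
with $g$ holomorphic on $\Omega$, $g(z_0)\neq 0$, and $u$ harmonic, subject to $\chi_{-u}=(\chi_{z_0})^{k+1}$.

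Next, I would verify that the new data satisfy the hypotheses of Theorem \ref{thm:k-suita}. Since $\rho^n=e^{-2nh}=e^{-2h'}$, and $h'+a_1'G_\Omega(\cdot,z_0)=n\bigl(h+a_1G_\Omega(\cdot,z_0)\bigr)$ is subharmonic with value $n\alpha_1$ at $z_0$, the required subharmonicity and the finiteness $\alpha_1'>-\infty$ both hold. The constraint $a_1'>-k'-1$ becomes $na_1>-nk-n$, i.e.\ $a_1>-k-1$, which is given. Then I would check that statements $(1)$ and $(2)$ of Theorem \ref{thm:k-suita} hold for the new data: multiplying the factorization by $n$ yields
\[
nh=\log|g^n|-na_1 G_{\Omega}(\cdot,z_0)+nu,
\]
with $g^n$ holomorphic and $g^n(z_0)\neq 0$, which is $(1)$. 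For $(2)$, if $f_{-u}\in\mathcal{O}^{\chi_{-u}}(\Delta)$ is the multiplicative function with $|f_{-u}|=p^{\star}e^{-u}$, then $f_{-u}^n\in\mathcal{O}^{\chi_{-u}^n}(\Delta)$ satisfies $|f_{-u}^n|=p^{\star}e^{-nu}$, whence
\[
\chi_{-nu}=\chi_{-u}^{\,n}=(\chi_{z_0})^{n(k+1)}=(\chi_{z_0})^{k'+1}.
\]

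Finally, applying the ``if'' direction of Theorem \ref{thm:k-suita} to $(k',a_1',h',\alpha_1',\rho^n)$ yields
\[
(c_{\beta}(z_0))^{2(k'+1)}=\frac{\pi}{a_1'+k'+1}\,e^{-2\alpha_1'}K^{(k')}_{\Omega,\rho^n}(z_0),
\]
which after substituting $k'+1=n(k+1)$, $a_1'+k'+1=n(a_1+k+1)$, and $\alpha_1'=n\alpha_1$ is exactly the claimed identity. The argument has no real obstacle beyond confirming the character identity $\chi_{-nu}=\chi_{-u}^n$; essentially the corollary records the fact that the equality case of Theorem \ref{thm:k-suita} is stable under the substitution $(h,a_1,k+1)\mapsto(nh,na_1,n(k+1))$.
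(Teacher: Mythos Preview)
Your proposal is correct and follows exactly the route the paper intends: the corollary is stated immediately after Theorem~\ref{thm:k-suita} with only the sentence ``Theorem~\ref{thm:k-suita} implies the following result,'' and your two-step application of that theorem (equality $\Rightarrow$ structure for $(k,a_1,h)$, then structure $\Rightarrow$ equality for $(nk+n-1,na_1,nh)$) is precisely the intended argument. The only substantive check is the character identity $\chi_{-nu}=\chi_{-u}^{\,n}$, which you handle correctly via $f_{-u}^n$.
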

	
	\section{Preparation}
	
	In this section, we present some preparations.
	
	\subsection{$L^2$ method on weakly pseudoconvex K\"ahler manifold}
	\
	
	We call a positive measurable function $c$ on $(S,+\infty)$ is in class $\tilde{\mathcal{P}}_S$ if $\int_S^s c(l)e^{-l}dl<+\infty$ for some $s>S$ and $c(t)e^{-t}$ is decreasing with respect to $t$.
	
	In this section, we present the following two lemmas.
	\begin{Lemma}
		Let $B \in (0, +\infty)$ and $t_0 \ge S$ be arbitrarily given. Let $(M,\omega)$ be
		an $n-$dimensional weakly pseudoconvex K\"ahler manifold. Let $\psi < -S$ be a
		plurisubharmonic function on $M$. Let $\varphi$ be a plurisubharmonic function on $M$.
		Let F be a holomorphic $(n,0)$ form on $\{\psi< -t_0\}$ such that
		\begin{equation}\nonumber
			\int_{K\cap \{\psi<-t_0\}} {|F|}^2<+\infty,
		\end{equation}
		for any compact subset $K$ of $M,$ and
		\begin{equation}\nonumber
			\int_M \frac{1}{B} \mathbb{I}_{\{-t_0-B< \psi < -t_0\}}  {|F|}^2
			e^{{-}\varphi}\le C <+\infty.
		\end{equation}
		Then there exists a holomorphic $(n,0)$ form $\widetilde F$ on $M$, such that
		\begin{equation}
			\int_M {|\widetilde F-(1-b_{t_0,B}(\psi))F|}^2
			e^{{-}\varphi+v_{t_0,B}(\psi)}c(-v_{t_0,B}(\psi))\le C\int^{t_0+B}_{S}c(t)e^{{-}t}dt.
		\end{equation}
		where $b_{t_0,B}(t)=\int^{t}_{-\infty}\frac{1}{B} \mathbb{I}_{\{-t_0-B< s < -t_0\}}ds$,
		$v_{t_0,B}(t)=\int^{t}_{-t_0}b_{t_0,B}(s)ds-t_0$ and $c(t)\in \tilde{\mathcal{P}}_S$.
		\label{lemma2.1}
	\end{Lemma}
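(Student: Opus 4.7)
The plan is the standard Berndtsson--Guan-Zhou cutoff-plus-$\bar\partial$ scheme. Extend $F$ trivially to all of $M$ by declaring $(1-b_{t_0,B}(\psi))F$ to equal $0$ on $\{\psi\ge -t_0\}$ (legitimate because $1-b_{t_0,B}\equiv 0$ there), and look for $\widetilde F$ of the form $\widetilde F=(1-b_{t_0,B}(\psi))F-u$, where the correction $u$ is an $L^2$ solution of
\begin{equation}\nonumber
\bar\partial u=\bar\partial\bigl((1-b_{t_0,B}(\psi))F\bigr)=-b_{t_0,B}'(\psi)\,\bar\partial\psi\wedge F.
\end{equation}
The right-hand side is a $\bar\partial$-closed $(n,1)$-form supported in the thin strip $\{-t_0-B<\psi<-t_0\}$, and its $L^2$ mass against the relevant weight is exactly controlled by the hypothesis $\int_M \tfrac{1}{B}\mathbb{I}_{\{-t_0-B<\psi<-t_0\}}|F|^2 e^{-\varphi}\le C$.

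The core analytic step is to solve this $\bar\partial$-equation with the twisted weight $\Phi:=\varphi-v_{t_0,B}(\psi)-\log c(-v_{t_0,B}(\psi))$. Since $c(t)e^{-t}$ is decreasing and $v_{t_0,B}$ is increasing and convex, $\Phi$ inherits plurisubharmonicity from $\varphi$. I would apply the Donnelly--Fefferman/twisted Bochner--Kodaira--Nakano inequality with auxiliary one-variable functions $\eta$ and $\lambda$ composed with $-v_{t_0,B}(\psi)$, chosen so that the curvature denominator collapses cleanly against $b_{t_0,B}'(\psi)=\tfrac{1}{B}\mathbb{I}_{(-t_0-B,-t_0)}(\psi)$; these choices essentially coincide with those made by Guan--Yuan in \cite{GY-concavity} in the Stein case and depend only on primitives of $c(t)e^{-t}$. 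A one-variable substitution $t=-v_{t_0,B}(\tau)$ then telescopes the right-hand bound to $C\int_S^{t_0+B}c(t)e^{-t}dt$.

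The passage from a local Stein model to a weakly pseudoconvex K\"ahler $M$ is handled by an exhaustion: take an increasing sequence of smoothly bounded relatively compact open sets $M_j\Subset M$ with sufficient convexity, replace $\psi$ and $\varphi$ by decreasing smooth plurisubharmonic approximations $\psi_\epsilon$, $\varphi_\epsilon$ on neighborhoods of each $M_j$, solve the above $\bar\partial$-problem on every $M_j$ with uniform constants, and extract a weak $L^2$ limit via a diagonal subsequence. Lower semicontinuity of the $L^2$ norm under weak convergence preserves the estimate, and distributional $\bar\partial$-closedness of the limit gives a holomorphic $(n,0)$ form $\widetilde F$ on all of $M$. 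The local finiteness hypothesis $\int_{K\cap\{\psi<-t_0\}}|F|^2<+\infty$ is exactly what is needed to keep the cutoff term $(1-b_{t_0,B}(\psi))F$ locally $L^2$ during this limit.

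The main obstacle is the precise bookkeeping inside the twisted estimate: one must choose $\eta$ and $\lambda$ so that simultaneously (i) the left-hand weight produced by the estimate equals $e^{-\varphi+v_{t_0,B}(\psi)}c(-v_{t_0,B}(\psi))$, (ii) the curvature term in the Bochner--Kodaira--Nakano identity is pointwise non-negative off the strip, and (iii) the constant collapses to $\int_S^{t_0+B}c(t)e^{-t}dt$ after integration by parts. The identities $v_{t_0,B}'=b_{t_0,B}$ and $v_{t_0,B}''=\tfrac{1}{B}\mathbb{I}_{(-t_0-B,-t_0)}$ make (iii) a clean one-variable calculation, and this is exactly where the hypothesis $c\in\tilde{\mathcal{P}}_S$ (with $c(t)e^{-t}$ decreasing and $\int_S^s c(l)e^{-l}dl<+\infty$) is used; the rest is routine approximation.
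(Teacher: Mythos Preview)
Your overall scheme---trivially extend $(1-b_{t_0,B}(\psi))F$, solve a twisted $\bar\partial$-equation for the correction, choose auxiliary one-variable functions via an ODE in primitives of $c(t)e^{-t}$, exhaust, and pass to limits---matches the paper's architecture. But there is a genuine gap at the approximation step, and it is precisely the place where the weakly pseudoconvex K\"ahler hypothesis bites.

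You write ``replace $\psi$ and $\varphi$ by decreasing smooth plurisubharmonic approximations $\psi_\epsilon$, $\varphi_\epsilon$ on neighborhoods of each $M_j$.'' On a general K\"ahler manifold this is not available: unlike the Stein case, one cannot globally regularize a psh function to a \emph{smooth psh} function. What the paper uses instead is Demailly's regularization of closed positive $(1,1)$-currents (Lemma~\ref{lemma24}), which produces smooth \emph{quasi}-plurisubharmonic approximants $\psi_m$, $\varphi_{l,m'}$ whose Levi forms satisfy only $\sqrt{-1}\partial\bar\partial\psi_m\ge -2b\delta_m\omega$ on $M_k$, with $\delta_m\to 0$. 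This curvature loss is not cosmetic: it means the operator $B$ in the twisted Bochner--Kodaira inequality is no longer semipositive, and the standard H\"ormander/Demailly $L^2$ estimate does not apply directly. The paper therefore invokes the \emph{approximate} $\bar\partial$-solution with correcting term (Lemma~\ref{lemma23}): one solves $D''u+\sqrt{2b\tilde M(\delta_m+\delta_{m'})}\,h=\lambda$ with a joint bound on $u$ and $h$, and only after letting $m,m'\to\infty$ does the error term $\sqrt{\delta}h$ disappear in the weak limit. Your proposal skips this mechanism entirely; as written it would go through on a Stein manifold but not on a weakly pseudoconvex K\"ahler one.

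A second, smaller omission: the paper also truncates $\varphi$ to $\varphi_l=\max\{\varphi,-l\}$ before regularizing (so that Lelong numbers vanish and the approximants are smooth everywhere), mollifies $v_{t_0,B}$ to smooth convex $v_\epsilon$, and regularizes $c(t)$ to smooth $c_n(t)\in\tilde{\mathcal P}_S$. Each of these introduces an extra limit (the proof has seven nested limits in total), and the ODE system for $s,u$ is solved with $c_n$ rather than $c$. None of this is conceptually new beyond what you sketched, but the correcting-term device is the essential missing idea.
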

	
	\par
	We will prove Lemma \ref{lemma2.1} in  appendix. Lemma \ref{lemma2.1} implies the following lemma, which will be used in the proof of the Theorem \ref{maintheorem}.
	\begin{Lemma}
		Let $(M,X,Z)$  satisfy condition $(A)$ and $c(t)\in \mathcal{P}_{T,M}$. Let $B \in (0, +\infty)$ and $t_0>t_1 > T$ be arbitrarily given. Let $\psi < -T$ be a
		plurisubharmonic function on $M$ such that $\{\psi<-t\}\backslash (X\cup Z)$ is a weakly pseudoconvex K\"ahler manifold for any $t\ge T$. Let $\varphi$ be a Lebesgue measurable function on $M$ such that $\varphi+\psi$ is plurisubharmonic on $M$ .
		Let F be a holomorphic $(n,0)$ form on $\{\psi< -t_0\}$ such that
		\begin{equation}
			\int_{\{\psi<-t_0\}} {|F|}^2e^{-\varphi}c(-\psi)<+\infty,
			\label{condition of lemma 2.2}
		\end{equation}
		Then there exists a holomorphic $(n,0)$ form $\tilde{F}$ on $\{\psi<-t_1\}$ such that
		\begin{equation}
			\int_{\{\psi<-t_1\}}|\tilde{F}-(1-b_{t_0,B}(\psi))F|^2e^{-\varphi-\psi+v_{t_0,B}(\psi)}c(-v_{t_0,B}(\psi))
			\le C\int_{t_1}^{t_0+B}c(t)e^{-t}dt,
		\end{equation}
		where $C=\int_M \frac{1}{B} \mathbb{I}_{\{-t_0-B< \psi < -t_0\}}  {|F|}^2
		e^{{-}\varphi-\psi}$, $b_{t_0,B}(t)=\int^{t}_{-\infty}\frac{1}{B} \mathbb{I}_{\{-t_0-B< s < -t_0\}}ds$ and
		$v_{t_0,B}(t)=\int^{t}_{-t_0}b_{t_0,B}(s)ds-t_0$.
		\label{lemma 2.2}
	\end{Lemma}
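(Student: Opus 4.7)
The plan is to reduce Lemma \ref{lemma 2.2} directly to Lemma \ref{lemma2.1} by changing both the ambient manifold and the weight. Concretely, I would apply Lemma \ref{lemma2.1} on the weakly pseudoconvex K\"ahler manifold $M_{t_1}:=\{\psi<-t_1\}\setminus(X\cup Z)$ (which is weakly pseudoconvex K\"ahler by the hypothesis on $\psi$), taking $S=t_1$, using $\psi|_{M_{t_1}}<-t_1$ as the plurisubharmonic defining function, the plurisubharmonic weight $\varphi+\psi$ in place of $\varphi$, and the same $F$ on $M_{t_1}\cap\{\psi<-t_0\}$. The replacement of $\varphi$ by $\varphi+\psi$ is forced by the fact that $\varphi$ is only Lebesgue measurable, and it is precisely the extra factor $e^{-\psi}$ produced by this shift that appears in both the hypothesis defining $C$ and the conclusion of Lemma \ref{lemma 2.2}.

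Next I would verify the three hypotheses of Lemma \ref{lemma2.1} in this setup. First, $c\in\tilde{\mathcal{P}}_{t_1}$ follows immediately from the positivity of $c$ and the fact that $c(t)e^{-t}$ is decreasing. Second, local $L^2$ integrability of $F$ on compact subsets of $M_{t_1}\cap\{\psi<-t_0\}$ follows from (\ref{condition of lemma 2.2}) together with the positive lower bound on $e^{-\varphi}c(-\psi)$ over compacts of $M\setminus E$ provided by the definition of $\mathcal{P}_{T,M}$, observing that $E\subset Z$ and hence $M_{t_1}\subset M\setminus E$. Third, the quantity $C$ is automatically finite: on the strip $\{-t_0-B<\psi<-t_0\}$, monotonicity of $c(t)e^{-t}$ yields $c(-\psi)e^{\psi}\geq c(t_0+B)e^{-(t_0+B)}$, so
\[
|F|^2 e^{-\varphi-\psi}\leq \frac{e^{t_0+B}}{c(t_0+B)}\,|F|^2 e^{-\varphi}c(-\psi)
\]
pointwise on that strip, which bounds $C$ in terms of the integral in (\ref{condition of lemma 2.2}).

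Lemma \ref{lemma2.1} then produces a holomorphic $(n,0)$ form $\widetilde{F}$ on $M_{t_1}$ satisfying the desired $L^2$ estimate, where $(1-b_{t_0,B}(\psi))F$ is well defined on all of $M_{t_1}$ because $1-b_{t_0,B}(\psi)$ vanishes identically on $\{\psi\geq-t_0\}$ while $F$ is defined on $\{\psi<-t_0\}$. The remaining step is to extend $\widetilde{F}$ across $X\cup Z$ to a holomorphic form on the full $\{\psi<-t_1\}$: first across $X$ by its local $L^2$ negligibility granted by condition $(A)$, then across the analytic set $Z$ by Riemann's $L^2$ extension theorem. Since $X\cup Z$ has measure zero, the integral bound is preserved verbatim. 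The main subtlety I anticipate is verifying the local $L^2$ integrability of $\widetilde{F}$ near $E\subset Z\cap\{\psi=-\infty\}$, where $\varphi$ is only measurable and $-\psi$ blows up; the point is that $v_{t_0,B}(\psi)\geq -t_0-B/2$, so $c(-v_{t_0,B}(\psi))$ is pinched between two positive constants near $E$, reducing the matter to the positive lower bound on $e^{-\varphi}c(-\psi)$ away from $E$ (together with the boundedness of $e^{v_{t_0,B}(\psi)}$) to control $\widetilde{F}$ on compact neighborhoods of $Z\setminus E$ and across $E$ itself.
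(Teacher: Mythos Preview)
Your plan is essentially the paper's own proof: apply Lemma \ref{lemma2.1} on $\{\psi<-t_1\}\setminus(X\cup Z)$ with the plurisubharmonic weight $\varphi+\psi$ in place of $\varphi$, verify $C<+\infty$ from \eqref{condition of lemma 2.2}, and then extend the resulting form first across $(X\cup Z)\setminus E$ using the local positive lower bound on $e^{-\varphi}c(-\psi)$, and finally across $E$.

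One point needs correcting. For the extension across $E$ itself, the lower bound on $e^{-\varphi}c(-\psi)$ is \emph{not} available (by definition it is only guaranteed on compacts of $M\setminus E$), so it cannot be the tool you invoke there. What the paper actually uses near a point $z\in E$ is this: choose $U_z\subset\subset\{\psi<-t_0\}$; then $\varphi+\psi$ is plurisubharmonic, hence bounded above on $U_z$, so $e^{-\varphi-\psi}$ has a positive lower bound on $U_z$, while $e^{v_{t_0,B}(\psi)}c(-v_{t_0,B}(\psi))$ has a positive lower bound on all of $\{\psi<-t_1\}$ (as you note, $v_{t_0,B}(\psi)$ is bounded). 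These together give $\int_{U_z\setminus E}|\widetilde F-(1-b_{t_0,B}(\psi))F|^2<+\infty$; since $U_z\subset\{\psi<-t_0\}$ and $F$ is holomorphic there, $\int_{U_z}|F|^2<+\infty$, whence $\int_{U_z\setminus E}|\widetilde F|^2<+\infty$ and $\widetilde F$ extends across the analytic set containing $E$. So the missing ingredient in your sketch is precisely the plurisubharmonicity of $\varphi+\psi$, which you used to set up the application of Lemma \ref{lemma2.1} but did not reuse for the removable-singularity step at $E$.
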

	\begin{proof} It follows from inequality \eqref{condition of lemma 2.2} and $\inf_{t\in(t_0,t_0+B)}c(t)>0$ that
		$$C=\int_M \frac{1}{B} \mathbb{I}_{\{-t_0-B< \psi < -t_0\}}  {|F|}^2
		e^{{-}\varphi-\psi}<+\infty.$$
		As $(M,X,Z)$  satisfies condition $(A)$ and $c(t)\in \mathcal{P}_{T,M}$, then  $M\backslash (Z\cup X)$ is a weakly pseudoconvex K\"ahler manifold and there exists a closed subset $E\subset Z\cap \{\psi=-\infty\}$ such that  $e^{-\varphi}c(-\psi)$ has locally positive lower bound on $M\backslash E$.
		
		Combining inequality \eqref{condition of lemma 2.2} and $e^{-\varphi}c(-\psi)$ has locally positive lower bound on $M\backslash E$, we obtain that
		$$\int_{K\cap\{\psi<-t_0\}}|F|^2<+\infty$$
		holds for any compact subset $K$ of $M\backslash (Z\cup X)$. Then Lemma \ref{lemma2.1} shows that there exists a holomorphic $(n,0)$ form $\tilde{F}_Z$ on $\{\psi<-t_1\}\backslash (Z\cup X)$, such that
		\begin{equation}\nonumber
			\int_{\{\psi<-t_1\}\backslash (Z\cup X)}|\tilde{F}_Z-(1-b_{t_0,B}(\psi))F|^2e^{-\varphi-\psi+v_{t_0,B}(\psi)}c(-v_{t_0,B}(\psi))
			\le C\int_{t_1}^{t_0+B}c(t)e^{-t}dt.
		\end{equation}
		For any $z\in \{\psi<-t_1\}\cap((Z\cup X)\backslash E)$, there exists an open neighborhood $V_z$ of $z$, such that $V_z\subset\subset \{\psi<-t_1\}\backslash E$. Note that $c(t)e^{-t}$ is decreasing on $(t,+\infty)$ and $v_{t_0,B}(\psi)\ge \psi$, then we have
		\begin{equation}\nonumber
			\begin{split}
				&\int_{V_z\backslash (Z\cup X)}
				|\tilde{F}_Z-(1-b_{t_0,B}(\psi))F|^2e^{-\varphi}c(-\psi)\\
				\le& \int_{V_z\backslash (Z\cup X)}
				|\tilde{F}_Z-(1-b_{t_0,B}(\psi))F|^2e^{-\varphi-\psi+v_{t_0,B}(\psi)}c(-v_{t_0,B}(\psi))\\
				<&+\infty.
			\end{split}
		\end{equation}
		Note that there exists a positive number $C_1>0$ such that $e^{-\varphi}c(-\psi)>C_1$ on $V_z$ and
		$\int_{V_z\backslash (Z\cup X)}
		|(1-b_{t_0,B}(\psi))F|^2e^{-\varphi}c(-\psi)\le\int_{\{\psi<-t_0\}}
		|(1-b_{t_0,B}(\psi))F|^2e^{-\varphi}c(-\psi)<+\infty$, then we have
		\begin{equation}\nonumber
			\begin{split}
				&\int_{V_z\backslash (Z\cup X)}|\tilde{F}_Z|^2\\
				\le&C_1 \int_{V_z\backslash (Z\cup X)}|\tilde{F}_Z|^2e^{-\varphi}c(-\psi)\\
				\le&2C_1
				(\int_{V_z\backslash (Z\cup X)}|\tilde{F}_Z-(1-b_{t_0,B}(\psi))F|^2e^{-\varphi}c(-\psi)+\int_{V_z\backslash (Z\cup X)}
				|(1-b_{t_0,B}(\psi))F|^2e^{-\varphi}c(-\psi))\\
				<&+\infty.
			\end{split}
		\end{equation}
		As $Z\cup X$ is locally negligible with respect to $L^2$ holomorphic function, hence we can find a holomorphic extension $\tilde{F}_E$ of $\tilde{F}_Z$ from $\{\psi<-t_1\}\backslash (Z\cup X)$ to $\{\psi<-t_1\}\backslash E$ such that
		\begin{equation}\nonumber
			\int_{\{\psi<-t_1\}\backslash E}|\tilde{F}_E-(1-b_{t_0,B}(\psi))F|^2e^{-\varphi-\psi+v_{t_0,B}(\psi)}c(-v_{t_0,B}(\psi))
			\le C\int_{t_1}^{t_0+B}c(t)e^{-t}dt.
		\end{equation}
		Note that $E\subset\{\psi<-t_0\}\subset \{\psi<-t_1\}$, for any $z\in E$, there exists an open neighborhood $U_z$ of $z$ such that $U_z\subset\subset \{\psi<-t_0\}$. As $\varphi+\psi$ is plurisubharmonic on $M$ and $e^{v_{t_0,B}(\psi)}c(-v_{t_0,B}(\psi))$ has positive lower bound on $\{\psi<-t_1\}$, then we have
		\begin{equation}\nonumber
			\begin{split}
				&\int_{U_z\backslash E}
				|\tilde{F}_E-(1-b_{t_0,B}(\psi))F|^2\\
				\le& C_2\int_{\{\psi<-t_1\}\backslash E}
				|\tilde{F}_E-(1-b_{t_0,B}(\psi))F|^2e^{-\varphi-\psi+v_{t_0,B}(\psi)}c(-v_{t_0,B}(\psi))\\
				<&+\infty.
			\end{split}
		\end{equation}
		where $C_2$ is some positive number. As $U_z\subset\subset \{\psi<-t_0\}$, we have
		$$\int_{U_z}|(1-b_{t_0,B}(\psi))F|^2<\int_{U_z}|F|^2<+\infty.$$
		Combining the two inequality above, we obtain that $\int_{U_z\backslash E} |\tilde{F}_E|^2<+\infty.$
		
		As $E$ is contained in some analytic subset of $M$, we can find a holomorphic extension $\tilde{F}$ of $\tilde{F}_E$ from $\{\psi<-t_1\}\backslash E$ to $\{\psi<-t_1\}$ such that
		\begin{equation}\nonumber
			\int_{\{\psi<-t_1\}}|\tilde{F}-(1-b_{t_0,B}(\psi))F|^2e^{-\varphi-\psi+v_{t_0,B}(\psi)}c(-v_{t_0,B}(\psi))
			\le C\int_{t_1}^{t_0+B}c(t)e^{-t}dt.
		\end{equation}
		Lemma \ref{lemma 2.2} is proved.
		
	\end{proof}

	\subsection{Some properties of $G(t)$}
	\
	
	In the present section, we prove some properties related to $G(t)$.
	
	We firstly introduce a property of coherent analytic sheaves and a convergence property of holomorphic $(n,0)$ form.
	\begin{Lemma}(see \cite{G-R})
		\label{closedness}
		Let $N$ be a submodule of $\mathcal O_{\mathbb C^n,o}^q$, $1\leq q<+\infty$, let $f_j\in\mathcal O_{\mathbb C^n}(U)^q$ be a sequence of $q-$tuples holomorphic in an open neighborhood $U$ of the origin $o$. Assume that the $f_j$ converge uniformly in $U$ towards  a $q-$tuples $f\in\mathcal O_{\mathbb C^n}(U)^q$, assume furthermore that all germs $(f_{j},o)$ belong to $N$. Then $(f,o)\in N$.	
	\end{Lemma}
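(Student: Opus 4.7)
My approach is to translate the analytic hypothesis (uniform convergence) into an algebraic one (convergence in the $\mathfrak{m}$-adic topology on $\mathcal{O}_{\mathbb{C}^n,o}^q$) and then to invoke Krull's intersection theorem. First I would recall that $\mathcal{O}_{\mathbb{C}^n,o}$ is a Noetherian local ring with maximal ideal $\mathfrak{m}$ (R\"uckert's basis theorem, a standard consequence of Weierstrass preparation). Hence $N$ is automatically finitely generated, and the quotient $M:=\mathcal{O}_{\mathbb{C}^n,o}^q/N$ is a finitely generated $\mathcal{O}_{\mathbb{C}^n,o}$-module. Krull's intersection theorem therefore gives $\bigcap_{k\ge 1}\mathfrak{m}^k M=0$, so to conclude $(f,o)\in N$ it suffices to show that for every $k\ge 1$ one has $(f,o)\in N+\mathfrak{m}^k\mathcal{O}_{\mathbb{C}^n,o}^q$.

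The bridge between analysis and algebra is the Cauchy integral formula. Since $f_j\to f$ uniformly on $U$, every partial derivative of each component of $f_j$ at $o$ converges to the corresponding derivative of $f$. Equivalently, the image of $(f_j,o)$ in the finite-dimensional $\mathbb{C}$-vector space $\mathcal{O}_{\mathbb{C}^n,o}^q/\mathfrak{m}^k\mathcal{O}_{\mathbb{C}^n,o}^q$, which records exactly the Taylor polynomials of degree $<k$ of the $q$ components, converges to the image of $(f,o)$. Projecting further to $V_k:=\mathcal{O}_{\mathbb{C}^n,o}^q/(N+\mathfrak{m}^k\mathcal{O}_{\mathbb{C}^n,o}^q)$, each class $[(f_j,o)]$ is zero in $V_k$, and because $V_k$ is finite-dimensional and hence Hausdorff, the limit $[(f,o)]$ is also zero. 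Thus $(f,o)\in N+\mathfrak{m}^k\mathcal{O}_{\mathbb{C}^n,o}^q$ for every $k$, and Krull's theorem delivers $(f,o)\in N$.

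I do not foresee any real obstacle in this route. The only conceptual point is the recognition that uniform convergence controls all Taylor coefficients at $o$ via Cauchy estimates, after which the algebra is automatic. A more analytic alternative would realize $N$ as the sections of a coherent subsheaf of $\mathcal{O}^q$ on a polydisc via Oka/Cartan coherence, then combine Cartan's Theorem B with the open mapping theorem in Fr\'echet spaces to obtain a quantitative division of each $f_j$ with uniformly bounded coefficients, and extract limits by Montel; but the Krull-based argument above is strictly shorter and does not require extending the generators of $N$ to a neighborhood of $o$.
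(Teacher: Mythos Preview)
Your argument is correct. The paper does not supply its own proof of this lemma but simply cites Grauert--Remmert, \emph{Coherent Analytic Sheaves}; the proof there is essentially the one you outline, reducing via Cauchy estimates to the $\mathfrak{m}$-adic closedness of a submodule in a finitely generated module over the Noetherian local ring $\mathcal{O}_{\mathbb{C}^n,o}$ and then invoking Krull's intersection theorem (equivalently, the Artin--Rees lemma). So your route coincides with the standard one and there is nothing further to compare.
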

	
	\begin{Lemma}
		\label{l:converge}(see \cite{GY-concavity})
		Let $M$ be a complex manifold. Let $S$ be an analytic subset of $M$.  	
		Let $\{g_j\}_{j=1,2,...}$ be a sequence of nonnegative Lebesgue measurable functions on $M$, which satisfies that $g_j$ are almost everywhere convergent to $g$ on  $M$ when $j\rightarrow+\infty$,  where $g$ is a nonnegative Lebesgue measurable function on $M$. Assume that for any compact subset $K$ of $M\backslash S$, there exist $s_K\in(0,+\infty)$ and $C_K\in(0,+\infty)$ such that
		$$\int_{K}{g_j}^{-s_K}dV_M\leq C_K$$
		for any $j$, where $dV_M$ is a continuous volume form on $M$.
		
		Let $\{F_j\}_{j=1,2,...}$ be a sequence of holomorphic $(n,0)$ form on $M$. Assume that $\liminf_{j\rightarrow+\infty}\int_{M}|F_j|^2g_j\leq C$, where $C$ is a positive constant. Then there exists a subsequence $\{F_{j_l}\}_{l=1,2,...}$, which satisfies that $\{F_{j_l}\}$ is uniformly convergent to a holomorphic $(n,0)$ form $F$ on $M$ on any compact subset of $M$ when $l\rightarrow+\infty$, such that
		$$\int_{M}|F|^2g\leq C.$$
	\end{Lemma}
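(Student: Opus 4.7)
My strategy combines H\"older's inequality (to convert the energy bound together with the integrability bound on $g_j^{-s_K}$ into a uniform $L^{2a}$-bound for $\{F_j\}$ on compacts of $M\setminus S$), the sub-mean value property of plurisubharmonic functions (to upgrade this to a uniform $L^\infty$-bound), Montel's theorem and a diagonal extraction (to pass to a convergent subsequence), and a Riemann-type extension of holomorphic forms across the analytic set $S$.

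After a preliminary extraction realizing the $\liminf$, I may assume $\int_M |F_j|^2 g_j \leq C+1$ for every $j$. Fix a compact $K\subset M\setminus S$ with constants $s_K,C_K$, and set $a:=s_K/(1+s_K)\in(0,1)$. Writing $|F_j|^{2a}=(|F_j|^2 g_j)^{a}(g_j^{-1})^{a}$ and applying H\"older's inequality with conjugate exponents $1/a$ and $1/(1-a)$ gives
\begin{equation*}
\int_K |F_j|^{2a}\, dV_M\leq (C+1)^{a}\, C_K^{\,1-a},
\end{equation*}
uniformly in $j$. Since each $F_j$ is a holomorphic $(n,0)$ form, $|F_j|^{2a}$ is locally plurisubharmonic, so the sub-mean value inequality upgrades this uniform $L^1(K)$-bound to a uniform $L^\infty$-bound for $\{F_j\}$ on compacts of $M\setminus S$. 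Montel's theorem together with a diagonal extraction over an exhaustion of $M\setminus S$ by compacts then yields a subsequence $\{F_{j_l}\}$ converging uniformly on compacts of $M\setminus S$ to a holomorphic $(n,0)$ form $F$ on $M\setminus S$. Fatou's lemma applied to $|F_{j_l}|^2 g_{j_l}\to |F|^2 g$ (a.e.\ on $M\setminus S$, hence a.e.\ on $M$) gives $\int_M |F|^2 g\leq C$.

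The main obstacle is the extension of $F$ holomorphically across $S$ and the upgrading of the convergence to hold uniformly on compacts of all of $M$. For this I would invoke the classical Riemann-type extension theorem for $L^2$ holomorphic forms across analytic subsets of codimension at least one (the locally negligible property invoked for $X\cup Z$ in the preceding Lemma). The needed local $L^2$-integrability near $S$ should follow from a sub-mean value estimate for $|F|^{2a}$ applied on small coordinate polydisks whose topological boundary can be chosen in $M\setminus S$ (possible by dimension counting, since $\dim_{\mathbb R}S\leq 2n-2$), combined with the uniform bounds on $M\setminus S$. The same argument applied to the differences $|F_{j_l}-F_{j_m}|^{2a}$ simultaneously shows that $\{F_{j_l}\}$ is uniformly Cauchy on compacts of $M$, yielding the full locally uniform convergence on $M$ and completing the proof.
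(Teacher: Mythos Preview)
The paper does not give a proof of this lemma; it merely cites \cite{GY-concavity}. So there is no ``paper's proof'' to compare against, and I can only evaluate your argument on its own merits.

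Your main line (H\"older with exponent $a=s_K/(1+s_K)$ to get uniform $L^{2a}$ bounds on compacts of $M\setminus S$, then the sub-mean value inequality for the plurisubharmonic functions $|F_j|^{2a}$ to upgrade to $L^\infty$, then Montel plus a diagonal extraction, then Fatou) is correct and is the natural approach.

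There is, however, a genuine gap in your extension step across $S$. Your claim that one can choose coordinate polydisks containing a given point $z_0\in S$ whose \emph{topological boundary} lies in $M\setminus S$ is false whenever $S$ has positive dimension. For instance, take $S=\{z_1=0\}\subset\mathbb{C}^2$: any polydisk (or ball) containing the origin necessarily has boundary meeting $S$, regardless of its center and radii. The ``dimension counting'' heuristic $\dim_{\mathbb R}S\le 2n-2$ does not rescue this, because the boundary of a polydisk has real dimension $2n-1$ and always contains a full $\{|z_k|<r_k\}$-factor in each face. The same objection applies to your Cauchy-on-compacts argument for the differences.

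The fix is to use the local structure of analytic sets rather than a naive transversality count. After a generic linear change of coordinates near $z_0\in S$, the projection $\pi:(z_1,\ldots,z_n)\mapsto(z_1,\ldots,z_{n-1})$ restricted to $S$ is a finite map, so one can choose a polydisk $P=P'\times\Delta_r$ with $S\cap(\overline{P'}\times\partial\Delta_r)=\emptyset$. The set $\overline{P'}\times\partial\Delta_r$ is then compact in $M\setminus S$, so $|F_j|$ is uniformly bounded there; applying the one-variable maximum principle to each slice $z'\mapsto F_j(z',\cdot)$ gives a uniform bound for $|F_j|$ on all of $P$. This yields local uniform boundedness of $\{F_j\}$ on all of $M$ directly, so Montel applies on $M$ itself and no separate Riemann-extension step is needed.
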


	Let $(M,X,Z)$ satisfy condition $(A)$, and let $c\in\mathcal{P}_{T,M}$.
	Let $Z_0$ be a subset of $\{\psi=-\infty\}$ such that $Z_0 \cap
	Supp(\mathcal{O}/\mathcal{I}(\varphi+\psi))\neq \emptyset$. Let $U \supset Z_0$ be
	an open subset of $M$. Let $\mathcal{F} \supset \mathcal{I}(\varphi+\psi)|_U$ be a coherent subsheaf of $\mathcal{O}$ on $U$ and let $f$ be a holomorphic $(1,0)$ form on $U$.
	\par
	The following lemma is a characterization of $G(t)= 0$, where $t\geq T$.

	\begin{Lemma}
		The following two statements are equivalent:
		\par
		$(1)$ $f \in
		H^0(Z_0,(\mathcal{O} (K_M) \otimes \mathcal{F})|_{Z_0} )$.
		\par
		$(2)$ $G(t) = 0$.
		\label{G equal to 0}
	\end{Lemma}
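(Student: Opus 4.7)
The plan is to prove the two implications separately. The direction $(1)\Rightarrow(2)$ is immediate: taking the zero form $\tilde{f}\equiv 0$ as candidate, the hypothesis $f\in H^{0}(Z_0,(\mathcal{O}(K_M)\otimes\mathcal{F})|_{Z_0})$ combined with the fact that $\mathcal{F}$ is a sheaf of modules yields $\tilde{f}-f=-f\in H^{0}(Z_0,(\mathcal{O}(K_M)\otimes\mathcal{F})|_{Z_0})$, so $\tilde{f}$ is admissible; its $L^{2}$ integral vanishes, forcing $G(t)\leq 0$, and the lower bound $G(t)\geq 0$ is obvious.

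For $(2)\Rightarrow(1)$ I would extract a minimizing sequence $\{\tilde{f}_j\}\subset\mathcal{H}^{2}(c,t)$ with $\int_{\{\psi<-t\}}|\tilde{f}_j|^{2}e^{-\varphi}c(-\psi)\to 0$ and $(\tilde{f}_j-f,z_0)\in(\mathcal{O}(K_M)\otimes\mathcal{F})_{z_0}$ for every $z_0\in Z_0$, and then show that the germ of $f$ at an arbitrary $z_0\in Z_0$ lies in $(\mathcal{O}(K_M)\otimes\mathcal{F})_{z_0}$. Fix such a $z_0\in Z_0\subset\{\psi=-\infty\}$; by upper semi-continuity of $\psi$ there is an open coordinate neighborhood $V\subset U\cap\{\psi<-t\}$ of $z_0$. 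Apply Lemma \ref{l:converge} on $V$ with constant weight $g_j\equiv g=e^{-\varphi}c(-\psi)$ and analytic subset $S=Z\cap V$: for any compact $K\subset V\setminus Z$ one has $K\subset M\setminus E$ since $E\subset Z\cap\{\psi=-\infty\}$, so the defining property of $\mathcal{P}_{T,M}$ forces $g$ to be bounded below by a positive constant on $K$, hence $\int_K g^{-s}\,dV_M<+\infty$ for any $s>0$. Since $\liminf_{j}\int_V|\tilde{f}_j|^{2}g=0$, the lemma delivers a subsequence $\{\tilde{f}_{j_l}\}$ converging uniformly on compact subsets of $V$ to a holomorphic $(n,0)$ form $F$ with $\int_V|F|^{2}g\leq 0$, so $F\equiv 0$. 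Consequently $\tilde{f}_{j_l}-f\to -f$ uniformly on compacts of $V$, and the closure property of coherent analytic sheaves (Lemma \ref{closedness}, after choosing a local trivialization of $K_M$ near $z_0$) yields $(-f,z_0)\in(\mathcal{O}(K_M)\otimes\mathcal{F})_{z_0}$, hence $(f,z_0)\in(\mathcal{O}(K_M)\otimes\mathcal{F})_{z_0}$. Ranging over $z_0\in Z_0$ gives $(1)$.

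The main technical point is that $e^{-\varphi}c(-\psi)$ can degenerate on the closed set $E\subset Z\cap\{\psi=-\infty\}$, and this set can meet $Z_0$; without care one cannot convert the $L^{2}$ smallness of $\tilde{f}_j$ into uniform smallness near such a $z_0$. This is precisely where the containment $E\subset Z$ built into the definition of $\mathcal{P}_{T,M}$, together with Lemma \ref{l:converge}'s tolerance of an analytic exceptional set, is essential: choosing $S=Z$ absorbs $E$ into an analytic set and extends the uniform convergence across $E$, which then feeds the sheaf-theoretic closure argument.
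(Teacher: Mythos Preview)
Your proof is correct and follows essentially the same approach as the paper: a minimizing sequence, an appeal to Lemma~\ref{l:converge} (exploiting that $e^{-\varphi}c(-\psi)$ is bounded below on compacts away from the analytic set $Z\supset E$) to extract a subsequence converging locally uniformly to $0$, and then the closure property of coherent sheaves (Lemma~\ref{closedness}) to conclude $(f,z_0)\in(\mathcal{O}(K_M)\otimes\mathcal{F})_{z_0}$. The only cosmetic difference is that the paper applies Lemma~\ref{l:converge} once on all of $\{\psi<-t\}$, obtaining a single subsequence that works for every $z_0\in Z_0$, whereas you localize to a neighborhood $V$ of a fixed $z_0$; since the conclusion is germ-wise this makes no difference. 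One minor remark: Lemma~\ref{l:converge} as stated asks for a \emph{positive} bound $C$, so strictly speaking you should invoke it with, say, $C=1$, and then use Fatou along the resulting subsequence (whose integrals still tend to $0$) to get $\int_V|F|^2g=0$ rather than reading off $\int_V|F|^2g\le 0$ directly.
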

	\begin{proof}
		$(1)$ obviously implies $(2)$.
		
		Now we prove $(2)$ implies $(1)$. As $G(t)=0$, then there exists holomorphic $(n,0)$ form $\{\tilde{f}_j\}_{j\in \mathbb{N}^+}$ on $\{\psi<-t\}$ such that $\lim\limits_{j \to +\infty}\int_{\{\psi<-t\}}|\tilde{f}_j|^2e^{-\varphi}c(-\psi)=0$ and $(\tilde{f}_j-f)\in H^0(Z_0,(\mathcal{O} (K_M) \otimes \mathcal{F})|_{Z_0})$ for any $j$. As $e^{-\varphi}c(-\psi)$ has positive lower bound on any compact subset of $M\backslash Z$, where $Z$ is some analytic subset of $M$, it follows from Lemma \ref{l:converge} that there exists a subsequence of $\{\tilde{f}_j\}_{j\in \mathbb{N}^+}$ also denoted by $\{\tilde{f}_j\}_{j\in \mathbb{N}^+}$ that compactly convergent to $0$. It is clear that $\tilde{f}_j-f$ is compactly convergent to $0-f=-f$ on $U\cap \{\psi<-t\}$. It follows from Lemma \ref{closedness} that $f\in H^0(Z_0,(\mathcal{O} (K_M) \otimes \mathcal{F})|_{Z_0} )$. This prove Lemma \ref{G equal to 0}.
	\end{proof}

	The following lemma shows the existence and uniqueness of the holomorphic $(n,0)$ form related to $G(t)$.
	\begin{Lemma}
		\label{existence of F}
		Assume that $G(t)<+\infty$ for some $t\in [T,+\infty)$. Then there exists a unique
		holomorphic $(n,0)$ form $F_t$ on $\{\psi<-t\}$ satisfying
		$$\ \int_{\{\psi<-t\}}|F_t|^2e^{-\varphi}c(-\psi)=G(t)$$  and
		$\ (F_t-f)\in
		H^0(Z_0,(\mathcal{O} (K_M) \otimes \mathcal{F})|_{Z_0} ).$
		\par
		Furthermore, for any holomorphic $(n,0)$ form $\hat{F}$ on $\{\psi<-t\}$ satisfying
		$$\int_{\{\psi<-t\}}|\hat{F}|^2e^{-\varphi}c(-\psi)<+\infty$$ and $(\hat{F}-f)\in
		H^0(Z_0,(\mathcal{O} (K_M) \otimes \mathcal{F})|_{Z_0} ).$ We have the following equality
		\begin{equation}
			\begin{split}
				&\int_{\{\psi<-t\}}|F_t|^2e^{-\varphi}c(-\psi)+
				\int_{\{\psi<-t\}}|\hat{F}-F_t|^2e^{-\varphi}c(-\psi)\\
				=&\int_{\{\psi<-t\}}|\hat{F}|^2e^{-\varphi}c(-\psi).
				\label{orhnormal F}
			\end{split}
		\end{equation}
	\end{Lemma}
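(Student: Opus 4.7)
The plan is to realize $F_t$ as the limit of a minimizing sequence and to deduce both uniqueness and the identity \eqref{orhnormal F} from the Hilbert space structure of the weighted $L^2$ space of $(n,0)$-forms on $\{\psi<-t\}$ with density $e^{-\varphi}c(-\psi)$.

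\textbf{Existence.} First I would select $\{\tilde f_j\}_{j\in\mathbb{N}^+}\subset\mathcal{H}^2(c,t)$ with $\int_{\{\psi<-t\}}|\tilde f_j|^2 e^{-\varphi}c(-\psi)\to G(t)$. Because $(M,X,Z)$ satisfies condition $(A)$ and $c\in\mathcal{P}_{T,M}$, the weight $e^{-\varphi}c(-\psi)$ is locally bounded below by a positive constant on $\{\psi<-t\}\setminus E$, where $E$ is contained in an analytic subset of $M$. This verifies the hypothesis of Lemma \ref{l:converge} (taking $g_j\equiv e^{-\varphi}c(-\psi)$ and $S$ the analytic set containing $E$), so I can extract a subsequence, still denoted $\{\tilde f_j\}$, which converges uniformly on compact subsets of $\{\psi<-t\}$ to a holomorphic $(n,0)$ form $F_t$ with
$$\int_{\{\psi<-t\}}|F_t|^2 e^{-\varphi}c(-\psi)\le G(t).$$
To verify the jet constraint, I would fix $z_0\in Z_0$, take a small coordinate neighborhood $V\subset U$ of $z_0$, and apply Lemma \ref{closedness} to the submodule $N=(\mathcal{O}(K_M)\otimes\mathcal{F})_{z_0}$ and the uniformly convergent sequence $(\tilde f_j-f)|_V\to (F_t-f)|_V$: this forces $(F_t-f,z_0)\in N$. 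Hence $F_t\in\mathcal{H}^2(c,t)$ is admissible, so the reverse inequality $\ge G(t)$ is immediate from the definition of $G(t)$, and $F_t$ is an actual minimizer.

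\textbf{Orthogonal decomposition.} For any $s\in\mathbb{C}$ and any admissible $\hat F$, the $(n,0)$-form $F_t+s(\hat F-F_t)-f=(1-s)(F_t-f)+s(\hat F-f)$ again lies in $H^0(Z_0,(\mathcal{O}(K_M)\otimes\mathcal{F})|_{Z_0})$, so $F_t+s(\hat F-F_t)$ is admissible. Minimality of $F_t$ gives
$$\int_{\{\psi<-t\}}\bigl|F_t+s(\hat F-F_t)\bigr|^2 e^{-\varphi}c(-\psi)\ \ge\ \int_{\{\psi<-t\}}|F_t|^2 e^{-\varphi}c(-\psi)$$
for every $s\in\mathbb{C}$. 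Expanding the left-hand side using the inner product $\langle f,g\rangle=\sqrt{-1}^{n^2}f\wedge\overline{g}$ and letting $s\to 0$ along both the real and imaginary axes forces the cross term
$$\int_{\{\psi<-t\}}\langle F_t,\hat F-F_t\rangle\, e^{-\varphi}c(-\psi)=0,$$
which yields \eqref{orhnormal F} upon expanding $|\hat F|^2=|F_t+(\hat F-F_t)|^2$. Uniqueness of $F_t$ follows: applying \eqref{orhnormal F} to a second minimizer $F_t'$ in place of $\hat F$ gives $\int_{\{\psi<-t\}}|F_t'-F_t|^2 e^{-\varphi}c(-\psi)=0$, hence $F_t'=F_t$ almost everywhere (since the weight is locally positive off an analytic set), and then everywhere by holomorphy.

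\textbf{Main obstacle.} The only nontrivial step is existence: it requires simultaneously controlling $L^2$ compactness of the minimizing sequence off the singular set $E$ (handled by Lemma \ref{l:converge}) and persistence of the germ-level constraint at every point of $Z_0$ (handled by Lemma \ref{closedness}). The Hilbert-space arguments for the orthogonality identity and uniqueness are then routine.
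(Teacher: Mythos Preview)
Your proof is correct and follows essentially the same route as the paper: existence via a minimizing sequence together with Lemma \ref{l:converge} and Lemma \ref{closedness}, and the Hilbert-space first-variation argument for orthogonality. The only cosmetic difference is ordering: the paper establishes uniqueness first via the parallelogram identity and then derives \eqref{orhnormal F}, whereas you obtain \eqref{orhnormal F} first and read off uniqueness as a corollary.
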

	
	\begin{proof}
		Firstly, we prove the existence of $F_t$. As $G(t)<+\infty$, then there exists a sequence of holomorphic $(n,0)$ form $\{f_j\}_{j\in \mathbb{N}^+}$ on $\{\psi<-t\}$ such that $\lim\limits_{j \to +\infty}\int_{\{\psi<-t\}}|f_j|^2e^{-\varphi}c(-\psi)=G(t)$ and $(f_j-f)\in H^0(Z_0,(\mathcal{O} (K_M) \otimes \mathcal{F})|_{Z_0})$ for any $j$.  As $e^{-\varphi}c(-\psi)$ has a positive lower bound on any compact subset of $M\backslash Z$, where $Z$ is a analytic subset of $M$, it follows from Lemma \ref{l:converge} that there exists a subsequence of $\{f_j\}$ compactly converging to a holomorphic $(n,0)$ form $F$ on $\{\psi<-t\}$ satisfying $\int_{\{\psi<-t\}}|F|^2e^{-\varphi}c(-\psi)\le G(t)$. It follows from Lemma \ref{closedness} that $(F-f)\in H^0(Z_0,(\mathcal{O} (K_M) \otimes \mathcal{F})|_{Z_0})$. Then we obtain the existence of $F_t(=F)$.
		
		We prove the uniqueness of $F_t$ by contradiction: if not, there exists
		two different holomorphic $(n,0)$ forms $f_1$ and $f_2$ on $\{\psi<-t\}$
		satisfying $\int_{\{\psi<-t\}}|f_1|^2e^{-\varphi}$  $c(-\psi)=
		\int_{\{\psi<-t\}}|f_2|^2e^{-\varphi}c(-\psi)=G(t)$, $(f_1-f)\in
		H^0(Z_0,(\mathcal{O} (K_M) \otimes \mathcal{F})|_{Z_0})$ and $(f_2-f)\in
		H^0(Z_0,(\mathcal{O} (K_M) \otimes \mathcal{F})|_{Z_0})$. Note that
		\begin{equation}
			\begin{split}
				\int_{\{\psi<-t\}}|\frac{f_1+f_2}{2}|^2e^{-\varphi}c(-\psi)+
				\int_{\{\psi<-t\}}|\frac{f_1-f_2}{2}|^2e^{-\varphi}c(-\psi)\\
				=\frac{1}{2}(\int_{\{\psi<-t\}}|f_1|^2e^{-\varphi}c(-\psi)+
				\int_{\{\psi<-t\}}|f_1|^2e^{-\varphi}c(-\psi))=G(t),
			\end{split}
		\end{equation}
		then we obtain that
		\begin{equation}\nonumber
			\begin{split}
				\int_{\{\psi<-t\}}|\frac{f_1+f_2}{2}|^2e^{-\varphi}c(-\psi)
				\le G(t)
			\end{split}
		\end{equation}
		and $(\frac{f_1+f_2}{2}-f)\in H^0(Z_0,(\mathcal{O} (K_M) \otimes \mathcal{F})|_{Z_0})$, which contradicts  the definition of $G(t)$.
		\par
		Now we prove the equality \eqref{orhnormal F}. For any holomorphic $h$ on $\{\psi<-t\}$
		satisfying $\int_{\{\psi<-t\}}|h|^2e^{-\varphi}c(-\psi)<+\infty$ and $h \in
		H^0(Z_0,(\mathcal{O} (K_M) \otimes \mathcal{F})|_{Z_0})$.  It is clear that for any complex
		number $\alpha$, $F_t+\alpha h$ satisfying $((F_t+\alpha h)-f) \in
		H^0(Z_0,(\mathcal{O} (K_M) \otimes \mathcal{F})|_{Z_0})$ and
		$\int_{\{\psi<-t\}}|F_t|^2e^{-\varphi}c(-\psi) \leq \int_{\{\psi<-t\}}|F_t+\alpha
		h|^2e^{-\varphi}c(-\psi)$. Note that
		\begin{equation}\nonumber
			\begin{split}
				\int_{\{\psi<-t\}}|F_t+\alpha
				h|^2e^{-\varphi}c(-\psi)-\int_{\{\psi<-t\}}|F_t|^2e^{-\varphi}c(-\psi)\geq 0
			\end{split}
		\end{equation}
		(By considering $\alpha \to 0$) implies
		\begin{equation}\nonumber
			\begin{split}
				\mathfrak{R} \int_{\{\psi<-t\}}F_t\bar{h}e^{-\varphi}c(-\psi)=0,
			\end{split}
		\end{equation}
		then we have
		\begin{equation}\nonumber
			\begin{split}
				\int_{\{\psi<-t\}}|F_t+h|^2e^{-\varphi}c(-\psi)=
				\int_{\{\psi<-t\}}(|F_t|^2+|h|^2)e^{-\varphi}c(-\psi).
			\end{split}
		\end{equation}
		\par
		Letting $h=\hat{F}-F_t$, we obtain equality \eqref{orhnormal F}.
	\end{proof}

	The following lemma shows the  lower semicontinuity property of $G(t)$.
	\begin{Lemma}$G(t)$ is decreasing with respect to $t\in
		[T,+\infty)$, such that $\lim \limits_{t \to t_0+0}G(t)=G(t_0)$ for any $t_0\in
		[T,+\infty)$, and if $G(t)<+\infty$ for some $t>T$, then $\lim \limits_{t \to +\infty}G(t)=0$. Especially, $G(t)$ is lower semicontinuous on $[T,+\infty)$.
		\label{semicontinuous}
	\end{Lemma}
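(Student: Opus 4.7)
The plan is to prove the three pieces---monotonicity, right-continuity, and decay at infinity---in that order; lower semicontinuity will then follow formally from the first two. \textbf{Monotonicity.} For $T\le t_1<t_2$, any competitor $\tilde f$ for $G(t_1)$ on $\{\psi<-t_1\}$ restricts to a competitor for $G(t_2)$ on $\{\psi<-t_2\}$, because $Z_0\subset\{\psi=-\infty\}\subset\{\psi<-t_2\}$ so the germ condition at $Z_0$ is preserved; since the integrand is nonnegative and the domain shrinks, $\int_{\{\psi<-t_2\}}|\tilde f|^2e^{-\varphi}c(-\psi)\le\int_{\{\psi<-t_1\}}|\tilde f|^2e^{-\varphi}c(-\psi)$, and taking infima gives $G(t_2)\le G(t_1)$. \textbf{Decay at infinity.} If $G(t_\ast)<+\infty$ for some $t_\ast>T$, Lemma \ref{existence of F} produces a minimizer $F_{t_\ast}$ on $\{\psi<-t_\ast\}$, and for $t\ge t_\ast$ its restriction to $\{\psi<-t\}$ still competes, so $G(t)\le\int_{\{\psi<-t\}}|F_{t_\ast}|^2e^{-\varphi}c(-\psi)$; since $\{\psi=-\infty\}$ has Lebesgue measure zero (as $\psi$ is plurisubharmonic), dominated convergence forces the right side to zero as $t\to+\infty$.

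For right-continuity at $t_0\in[T,+\infty)$, monotonicity supplies $L:=\lim_{t\to t_0+}G(t)\le G(t_0)$ for free, so I only need $G(t_0)\le L$ and may assume $L<+\infty$. Pick $t_n\downarrow t_0$ with $G(t_n)\to L$; for large $n$ we have $G(t_n)<+\infty$, so Lemma \ref{existence of F} yields minimizers $F_n$ on $\{\psi<-t_n\}$ with $\int_{\{\psi<-t_n\}}|F_n|^2e^{-\varphi}c(-\psi)=G(t_n)$. On $N:=\{\psi<-t_0\}=\bigcup_n\{\psi<-t_n\}$ I apply Lemma \ref{l:converge} with weights $g_n:=\mathbb{I}_{\{\psi<-t_n\}}e^{-\varphi}c(-\psi)$ converging a.e.\ to $g:=e^{-\varphi}c(-\psi)$; the required bound $\int_K g_n^{-s_K}\,dV_M\le C_K$ holds on each compact $K\Subset N\setminus E$ once $n$ is large enough that $K\subset\{\psi<-t_n\}$, because then $g_n|_K=g|_K$ has a positive lower bound by $c\in\mathcal{P}_{T,M}$, and this $K$-dependent tail suffices after a diagonal extraction along an exhaustion of $N\setminus E$. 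A subsequence of $\{F_n\}$ thus converges uniformly on compact subsets of $N$ to a holomorphic $(n,0)$-form $F$ satisfying $\int_N|F|^2e^{-\varphi}c(-\psi)\le L$. Since $Z_0\subset\{\psi<-t_n\}$ for every $n$ and each germ $(F_n-f,z_0)$ lies in $(\mathcal{O}(K_M)\otimes\mathcal{F})_{z_0}$, the compact convergence together with Lemma \ref{closedness} transfers this condition to the limit, so $(F-f)\in H^0(Z_0,(\mathcal{O}(K_M)\otimes\mathcal{F})|_{Z_0})$ and $F$ competes for $G(t_0)$, giving $G(t_0)\le L$. Lower semicontinuity on $[T,+\infty)$ is then immediate: at any $t_0$, right-continuity yields $\liminf_{t\to t_0^+}G(t)=G(t_0)$, while monotonicity yields $G(t)\ge G(t_0)$ for $t\le t_0$.

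The main obstacle is the right-continuity step: technically, the supports of the weights $g_n$ vary with $n$, so the uniform bound on $g_n^{-s_K}$ required by Lemma \ref{l:converge} holds only in a $K$-dependent tail and must be handled by a diagonal extraction over an exhaustion of $N\setminus E$; conceptually, one must carry the germ-level constraint at $Z_0$ through the compact limit via Lemma \ref{closedness}. Monotonicity and the decay at infinity are routine once the minimizer machinery of Lemma \ref{existence of F} is in hand.
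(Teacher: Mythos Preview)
Your proof is correct and follows essentially the same strategy as the paper: monotonicity by restriction of competitors, decay at infinity via dominated convergence applied to a fixed minimizer, and right-continuity by extracting a compactly convergent subsequence of minimizers $F_{t_n}$ via Lemma~\ref{l:converge} and then transferring the germ condition with Lemma~\ref{closedness}. The only cosmetic difference is that the paper phrases the right-continuity step as a contradiction and handles the varying domains by first applying Lemma~\ref{l:converge} on a fixed sublevel $\{\psi<-t_1\}$ (where all tail $F_t$ are genuinely holomorphic) and then diagonalizing over $t_1$, whereas you package the same diagonal argument as an application of Lemma~\ref{l:converge} on $\{\psi<-t_0\}$ with indicator-truncated weights; both routes require and use the same diagonal extraction.
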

	\begin{proof}
		By the definition of $G(t)$, it is clear that $G(t)$ is decreasing on
		$[T,+\infty)$. If $G(t)<+\infty$ for some $t>T$, by the dominated convergence theorem, we know $\lim\limits_{t\to +\infty}G(t)=0$. It suffices
		to prove $\lim \limits_{t \to t_0+0}G(t)=G(t_0)$ . We prove it by
		contradiction: if not, then $\lim \limits_{t \to t_0+0}G(t)<
		G(t_0)<+\infty$.
		\par
		By Lemma \ref{existence of F}, for any $t>t_0$, there exists a unique holomorphic $(n,0)$ form
		$F_t$ on $\{\psi<-t\}$ satisfying
		$\int_{\{\psi<-t\}}|F_t|^2e^{-\varphi}c(-\psi)=G(t)$ and $(F_t-f) \in
		H^0(Z_0,(\mathcal{O} (K_M) \otimes \mathcal{F})|_{Z_0})$. Note that $G(t)$ is decreasing
		implies that $\int_{\{\psi<-t\}}|F_t|^2e^{-\varphi}c(-\psi)\leq \lim
		\limits_{t \to t_0+0}G(t)$ for any $t>t_0$. If $\lim\limits_{t \to t_0+0}G(t)=+\infty$, the equality $\lim \limits_{t \to t_0+0}G(t)=G(t_0)$ is clear, thus it suffices to prove the case $\lim\limits_{t \to t_0+0}G(t)<+\infty$. As $e^{-\varphi}c(-\psi)$ has positive lower bound on any compact subset of $M\backslash Z$, where $Z$ is some analytic subset of $M$, and $\int_{\{\psi<-t\}}|F_t|^2e^{-\varphi}c(-\psi)\le \lim\limits_{t \to t_0+0}G(t)<+\infty$ holds for any $t\in (t_0,t_1]$, where $t_1>t_0$ is a fixed number, it follows from Lemma \ref{l:converge} that there exists $F_{t_j}$ $(t_j\to t_0+0,$ as $j \to +\infty)$ uniformly convergent on any compact subset of $\{\psi<-t_1\}$. Using diagonal method, we obtain a subsequence of $\{F_t\}$ (also denoted by $\{F_{t_j}\}$) convergent on any compact subset of $\{\psi<-t_0\}$.
		
		Let $\hat{F}_{t_0}:=\lim \limits_{j \to +\infty}F_{t_j}$, which is a
		holomorphic $(n,0)$ form on $\{\psi<-t_0\}$. Then it follows from the decreasing property of $G(t)$ that
		\begin{equation}\nonumber
			\begin{split}
				\int_{K}|\hat{F}_{t_0}|^2e^{-\varphi}c(-\psi)
				&\leq
				\liminf\limits_{j \to +\infty}\int_{K}|F_{t_j}|^2e^{-\varphi}c(-\psi)\\
				&\leq
				\liminf \limits_{j \to +\infty}G(t_j)\\
				&\leq \lim \limits_{t\to t_0+0} G(t)
			\end{split}
		\end{equation}
		for any compact set $K \subset \{\psi<-t_0\}$. It follows from Levi's theorem that
		\begin{equation}\nonumber
			\begin{split}
				\int_{\{\psi<-t_0\}}|\hat{F}_{t_0}|^2e^{-\varphi}c(-\psi)
				\leq \lim \limits_{t\to t_0+0} G(t).
			\end{split}
		\end{equation}
		It follows from Lemma \ref{closedness} that $(\hat{F}_{t_0}-f)\in H^0(Z_0,(\mathcal{O} (K_M) \otimes \mathcal{F})|_{Z_0}).$ Then we obtain that $G(t_0)\leq
		\int_{\{\psi<-t_0\}}|\hat{F}_{t_0}|^2e^{-\varphi}c(-\psi)
		\leq \lim \limits_{t\to t_0+0} G(t)$
		which contradicts $\lim \limits_{t\to t_0+0} G(t) <G(t_0)$.
	\end{proof}

	We consider the derivatives of $G(t)$ in the following lemma.
	
	\begin{Lemma}
		Assume that $G(t_1)<+\infty$, where $t_1\in (T,+\infty)$. Then for any $t_0>t_1$, we have
		\begin{equation}\nonumber
			\begin{split}
				\frac{G(t_1)-G(t_0)}{\int^{t_0}_{t_1} c(t)e^{-t}dt}\leq
				\liminf\limits_{B \to
					0+0}\frac{G(t_0)-G(t_0+B)}{\int_{t_0}^{t_0+B}c(t)e^{-t}dt},
			\end{split}
		\end{equation}
		i.e.
		\begin{equation}\nonumber
			\frac{G(t_0)-G(t_1)}{\int_{T_1}^{t_0}
				c(t)e^{-t}dt-\int_{T_1}^{t_1} c(t)e^{-t}dt} \geq
			\limsup \limits_{B \to 0+0}
			\frac{G(t_0+B)-G(t_0)}{\int_{T_1}^{t_0+B}
				c(t)e^{-t}dt-\int_{T_1}^{t_0} c(t)e^{-t}dt}.
		\end{equation}
		\label{derivatives of G}
	\end{Lemma}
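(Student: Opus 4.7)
The plan is to build a competitor for $G(t_1)$ from the unique minimizer $F_{t_0}$ of $G(t_0)$ via Lemma~\ref{lemma 2.2}, and then to push the resulting one‑parameter estimate through two successive limits: first $B\to 0^+$, then an auxiliary Young parameter $\epsilon\to+\infty$.

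First, by Lemma~\ref{existence of F} and the monotonicity in Lemma~\ref{semicontinuous}, $F_{t_0}$ exists and $G(t_0)\leq G(t_1)<+\infty$. For each $B>0$, I would apply Lemma~\ref{lemma 2.2} with $F=F_{t_0}$ to obtain a holomorphic $(n,0)$-form $\tilde F=\tilde F_B$ on $\{\psi<-t_1\}$ satisfying
\begin{equation*}
\int_{\{\psi<-t_1\}}|\tilde F-(1-b)F_{t_0}|^2 e^{-\varphi-\psi+v(\psi)}c(-v(\psi))\leq C_B\int_{t_1}^{t_0+B}c(s)e^{-s}\,ds,
\end{equation*}
where $b=b_{t_0,B}$, $v=v_{t_0,B}$ and $C_B=\tfrac{1}{B}\int_{\{-t_0-B<\psi<-t_0\}}|F_{t_0}|^2 e^{-\varphi-\psi}$. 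Since $v(\psi)\geq\psi$ and $c(s)e^{-s}$ is decreasing, one has $e^{-\psi+v(\psi)}c(-v(\psi))\geq c(-\psi)$ pointwise. Near any $z_0\in Z_0\subset\{\psi=-\infty\}$, $(1-b)F_{t_0}=F_{t_0}$, so the finite integral forces $(\tilde F-F_{t_0})_{z_0}\in\mathcal{I}(\varphi+\psi)_{z_0}\otimes\mathcal{O}(K_M)_{z_0}\subseteq\mathcal{F}_{z_0}\otimes\mathcal{O}(K_M)_{z_0}$; combined with $(F_{t_0}-f)_{z_0}\in\mathcal{F}_{z_0}\otimes\mathcal{O}(K_M)_{z_0}$, this shows $\tilde F$ is admissible for $G(t_1)$, so $G(t_1)\leq\int_{\{\psi<-t_1\}}|\tilde F|^2 e^{-\varphi}c(-\psi)$.

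The core estimate comes from applying the orthogonality in Lemma~\ref{existence of F} to $\tilde F|_{\{\psi<-t_0\}}$, itself a competitor for $G(t_0)$, yielding
\begin{equation*}
G(t_1)-G(t_0)\leq\int_{\{\psi<-t_0\}}|\tilde F-F_{t_0}|^2 e^{-\varphi}c(-\psi)+\int_{\{-t_0\leq\psi<-t_1\}}|\tilde F|^2 e^{-\varphi}c(-\psi).
\end{equation*}
Setting $u:=\tilde F-(1-b)F_{t_0}$, I have $\tilde F-F_{t_0}=u-bF_{t_0}$ on $\{\psi<-t_0\}$ and $\tilde F=u$ on $\{-t_0\leq\psi<-t_1\}$. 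Expanding $|u-bF_{t_0}|^2$ and applying Young's inequality $2b|F_{t_0}||u|\leq\epsilon b^2|F_{t_0}|^2+\epsilon^{-1}|u|^2$, together with $0\leq b\leq 1$ supported on the strip $\{-t_0-B<\psi<-t_0\}$ and the fact that $F_{t_0}$ is admissible for $G(t_0+B)$ (so $\int_{\{-t_0-B<\psi<-t_0\}}|F_{t_0}|^2e^{-\varphi}c(-\psi)\leq G(t_0)-G(t_0+B)$ by orthogonality), will give
\begin{equation*}
G(t_1)-G(t_0)\leq (1+\epsilon)(G(t_0)-G(t_0+B))+(1+\epsilon^{-1})\,C_B\int_{t_1}^{t_0+B}c(s)e^{-s}\,ds
\end{equation*}
for every $\epsilon,B>0$.

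To finish, divide by $\int_{t_1}^{t_0}c(s)e^{-s}ds$ and let $B\to 0^+$: the first summand vanishes since $G(t_0+B)\to G(t_0)$ by Lemma~\ref{semicontinuous} while $\int_{t_1}^{t_0+B}\to\int_{t_1}^{t_0}$, and then letting $\epsilon\to+\infty$ yields $\frac{G(t_1)-G(t_0)}{\int_{t_1}^{t_0}c(s)e^{-s}ds}\leq\liminf_{B\to 0^+}C_B$. The elementary chain $G(t_0)-G(t_0+B)\geq\int_{\{-t_0-B<\psi<-t_0\}}|F_{t_0}|^2 e^{-\varphi}c(-\psi)\geq c(t_0+B)e^{-(t_0+B)}\,B\,C_B$, together with the sandwich $\int_{t_0}^{t_0+B}c(s)e^{-s}ds\in\bigl[Bc(t_0+B)e^{-(t_0+B)},\,Bc(t_0+0)e^{-t_0}\bigr]$, shows $\frac{Bc(t_0+B)e^{-(t_0+B)}}{\int_{t_0}^{t_0+B}c(s)e^{-s}ds}\to 1$, hence $\liminf_B C_B\leq\liminf_B\frac{G(t_0)-G(t_0+B)}{\int_{t_0}^{t_0+B}c(s)e^{-s}ds}$, which is the claim. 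The hard part is handling the cross term $-2b\,\mathrm{Re}\langle F_{t_0},u\rangle$ sharply: a naive triangle inequality inserts a spurious factor of $2$ that destroys the desired estimate, so the $\epsilon$-Young step, executed in the correct order of limits (first $B\to 0^+$, then $\epsilon\to+\infty$), is essential.
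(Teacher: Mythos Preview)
Your argument is correct and constitutes a genuinely different route from the paper. The paper chooses a sequence $B_j\to0^+$ realizing the liminf, shows the forms $\tilde F_j$ produced by Lemma~\ref{lemma 2.2} have uniformly bounded $L^2$ norms, extracts (via Lemma~\ref{l:converge}) a locally uniform limit $F_1$, applies Fatou's lemma to pass to the limit inside the integral, and only then invokes the orthogonality of Lemma~\ref{existence of F}. You instead apply orthogonality at each finite $B$, use a Young parameter $\epsilon$ to split the cross term $-2b\,\mathrm{Re}\langle F_{t_0},u\rangle$, and take the limits $B\to0^+$ then $\epsilon\to+\infty$ at the end; the auxiliary inequality $C_B\le \bigl(\tfrac{Bc(t_0+B)e^{-(t_0+B)}}{\int_{t_0}^{t_0+B}c(s)e^{-s}ds}\bigr)^{-1}\tfrac{G(t_0)-G(t_0+B)}{\int_{t_0}^{t_0+B}c(s)e^{-s}ds}$ closes the loop.

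What each approach buys: your proof is more elementary---it never needs the Montel-type compactness in Lemma~\ref{l:converge} or Fatou's lemma, and it avoids constructing any limiting form $F_1$. The paper's route, however, yields a strictly sharper intermediate estimate, namely
\[
\int_{\{\psi<-t_0\}}|F_1-F_{t_0}|^2e^{-\varphi-\psi-t_0}c(t_0)+\int_{\{-t_0\le\psi<-t_1\}}|F_1|^2e^{-\varphi}c(-\psi)\le\Bigl(\int_{t_1}^{t_0}c(t)e^{-t}dt\Bigr)\liminf_{B\to0^+}\frac{G(t_0)-G(t_0+B)}{\int_{t_0}^{t_0+B}c(t)e^{-t}dt},
\]
whose equality case is exactly what drives the proof of Corollary~\ref{linear case of G} (the characterization of linearity). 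Your Young-inequality decomposition loses this equality information, so while it suffices for Lemma~\ref{derivatives of G}, it would not by itself recover the linearity criterion without further work.
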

	\begin{proof}
		It follows from Lemma \ref{semicontinuous} that $G(t)<+\infty$ for any $t>t_1$. By Lemma \ref{existence of F}, there exists a holomorphic $(n,0)$ form $F_{t_0}$ on $\{\psi<-t_0\}$, such that $(F_{t_0}-f)\in H^0(Z_0,(\mathcal{O} (K_M) \otimes \mathcal{F})|_{Z_0}) $ and $G(t_0)=\int_{\{\psi<-t_0\}}|F_{t_0}|^2e^{-\varphi}c(-\psi)$.
		
		It suffices to consider that $\liminf\limits_{B\to 0+0} \frac{G(t_0)-G(t_0+B)}{\int_{t_0}^{t_0+B}c(t)e^{-t}dt}\in [0,+\infty)$ because of the decreasing property of $G(t)$. Then there exists $B_j\to 0+0$ (as $j\to+\infty$) such that
		\begin{equation}
			\label{eq:211106e}\lim\limits_{j\to +\infty} \frac{G(t_0)-G(t_0+B_j)}{\int_{t_0}^{t_0+B_j}c(t)e^{-t}dt}=\liminf\limits_{B\to 0+0} \frac{G(t_0)-G(t_0+B)}{\int_{t_0}^{t_0+B}c(t)e^{-t}dt}\end{equation}
		and $\{\frac{G(t_0)-G(t_0+B_j)}{\int_{t_0}^{t_0+B_j}c(t)e^{-t}dt}\}_{j\in\mathbb{N}^{+}}$ is bounded. As $c(t)e^{-t}$ is decreasing and positive on $(t,+\infty)$, then
		\begin{equation}\label{eq:210908a}
			\begin{split}
				\lim\limits_{j\to +\infty} \frac{G(t_0)-G(t_0+B_j)}{\int_{t_0}^{t_0+B_j}c(t)e^{-t}dt}
				=&(\lim\limits_{j\to +\infty} \frac{G(t_0)-G(t_0+B_j)}{B_j})(\frac{1}{\lim\limits_{t\to t_0+0}c(t)e^{-t}})\\
				=&(\lim\limits_{j\to +\infty} \frac{G(t_0)-G(t_0+B_j)}{B_j})(\frac{e^{t_0}}{\lim\limits_{t\to t_0+0}c(t)}).
			\end{split}
		\end{equation}
		Hence $\{\frac{G(t_0)-G(t_0+B_j)}{B_j}\}_{j\in\mathbb{N}^+}$ is bounded with respect to $j$.
		
		As $t \leq v_j(t)$, the decreasing property of $c(t)e^{-t}$ shows that
		\begin{equation}\nonumber
			e^{-\psi+v_j(\psi)}c(-v_j(\psi))\geq c(-\psi).
		\end{equation}
		\par
		By Lemma \ref{lemma 2.2}, for any $B_j$, there exists holomorphic
		$(n,0)$ form $\tilde{F}_j$ on $\{\psi<-t_1\}$ such that $(\tilde{F}_j-F_{t_0})\in H^0(Z_0,(\mathcal{O} (K_M) \otimes \mathcal{I}(\varphi+\psi))|_{Z_0})\subset H^0(Z_0,(\mathcal{O} (K_M) \otimes \mathcal{F})|_{Z_0})$
		and
		\begin{flalign}
			&\int_{\{\psi<-t_1\}}|\tilde{F}_j-(1-b_{t_0,B_j}(\psi))F_{t_0}|^2e^{-\varphi}c(-\psi)\nonumber\\
			\leq &
			\int_{\{\psi<-t_1\}}|\tilde{F}_j-(1-b_{t_0,B_j}(\psi))F_{t_0}|^2e^{-\varphi}e^{-\psi+v_j(\psi)}c(-v_j(\psi))\nonumber\\
			\leq &
			\int^{t_0+B_j}_Tc(t)e^{-t}dt\int_{\{\psi<-t_1\}}\frac{1}{B_j}
			\mathbb{I}_{\{-t_0-B_j<\psi<-t_0\}}|F_{t_0}|^2e^{-\varphi-\psi}\nonumber\\
			\leq &
			\frac{e^{t_0+B_j}\int^{t_0+B_j}_Tc(t)e^{-t}dt}{\inf
				\limits_{t\in(t_0,t_0+B_j)}c(t)}\int_{\{\psi<-t_1\}}\frac{1}{B_j}
			\mathbb{I}_{\{-t_0-B_j<\psi<-t_0\}}|F_{t_0}|^2e^{-\varphi}c(-\psi)\nonumber\\
			= &
			\frac{e^{t_0+B_j}\int^{t_0+B}_Tc(t)e^{-t}dt}{\inf
				\limits_{t\in(t_0,t_0+B_j)}c(t)}\times
			(\int_{\{\psi<-t_1\}}\frac{1}{B_j}\mathbb{I}_{\{\psi<-t_0\}}|F_{t_0}|^2e^{-\varphi}c(-\psi)\nonumber\\
			&-\int_{\{\psi<-t_1\}}\frac{1}{B_j}\mathbb{I}_{\{\psi<-t_0-B_j\}}|F_{t_0}|^2e^{-\varphi}c(-\psi))\nonumber\\
			\leq &
			\frac{e^{t_0+B_j}\int^{t_0+B}_Tc(t)e^{-t}dt}{\inf
				\limits_{t\in(t_0,t_0+B_j)}c(t)} \times
			\frac{G(t_0)-G(t_0+B_j)}{B_j}.
			\label{219}
		\end{flalign}
		Firstly, we will prove that $\int_{\{\psi<-t_1\}} |\tilde{F}_j|^2e^{-\varphi}c(-\psi)$ is uniformly bounded
		with respect to $j$.
		Note that
		\begin{equation}
			\begin{split}
				&(\int_{\{\psi<-t_1\}}|\tilde{F}_j-(1-b_{t_0,B_j}(\psi))F_{t_0}|^2e^{-\varphi}c(-\psi))^{1/2}\\
				\geq &
				(\int_{\{\psi<-t_1\}}|\tilde{F}_j|^2e^{-\varphi}c(-\psi))^{1/2}-
				(\int_{\{\psi<-t_1\}}|(1-b_{t_0,B_j}(\psi))F_{t_0}|^2e^{-\varphi}c(-\psi))^{1/2},
			\end{split}
		\end{equation}
		then it follows from inequality (\ref{219}) that
		\begin{equation}
			\begin{split}
				&(\int_{\{\psi<-t_1\}}|\tilde{F}_j|^2e^{-\varphi}c(-\psi))^{1/2} \\
				\leq
				&(\frac{e^{t_0+B_j}\int^{t_0+B}_Tc(t)e^{-t}dt}{\inf
					\limits_{t\in(t_0,t_0+B_j)}c(t)})^{1/2} \times
				(\frac{G(t_0)-G(t_0+B_j)}{B_j})^{1/2}\\
				&+
				\int_{\{\psi<-t_1\}}|(1-b_{t_0,B_j}(\psi))F_{t_0}|^2e^{-\varphi}c(-\psi))^{1/2}.
			\end{split}
		\end{equation}
		Since $\{\frac{G(t_0)-G(t_0+B_j)}{B_j}\}_{j\in \mathbb{N}^+}$  is bounded and
		$0\leq b_{t_0,B_j}(\psi) \leq 1$, then $\int_{\{\psi<-t_1\}}|\tilde{F}_j|^2e^{-\varphi}$ $c(-\psi)dV_X$
		is uniformly bounded with respect to $j$.\par
		Now we prove the main result.
		
		It follows from $\int_{\{\psi<-t_1\}}|\tilde{F}_{j}|^{2}e^{-\varphi}c(-\psi)$ is bounded with respect to $j$ and Lemma \ref{l:converge} that there exists a subsequence of $\{\tilde{F}_j\}$, denoted by $\{\tilde{F}_{j_k}\}_{k\in\mathbb{N}^+}$, which is uniformly convergent to a holomorphic $(n,0)$ form $F_1$ on $\{\psi<-t_1\}$ on any compact subset of $\{\psi<-t_1\}$ when $k\rightarrow+\infty$,  such that
		$$\int_{\{\psi<-t_1\}}|F_1|^2e^{-\varphi}c(-\psi)\le\liminf_{j\rightarrow+\infty}\int_{\{\psi<-t_1\}}|\tilde{F}_{j}|^{2}e^{-\varphi}c(-\psi)<+\infty.$$
		As $(\tilde{F}_j-F_{t_{0}})\in H^{0}(Z_0,(\mathcal{O}(K_{M})\otimes\mathcal{F})|_{Z_0})$ for any $j$, we have $(F_1-F_{t_{0}})\in H^{0}(Z_0,(\mathcal{O}(K_{M})\otimes\mathcal{F})|_{Z_0})$.
		By direct calculation, we have
		\begin{equation*}
			\lim_{j\rightarrow+\infty}b_{t_0,B_j}(t)=\lim_{j\rightarrow+\infty}\int_{-\infty}^{t}\frac{1}{B_j}\mathbb{I}_{\{-t_0-B_j<s<-t_0\}}ds=\left\{ \begin{array}{lcl}
				0 & \mbox{if}& x\in(-\infty,-t_0)\\
				1 & \mbox{if}& x\in[-t_0,+\infty)
			\end{array} \right.
		\end{equation*}
		and
		\begin{equation*}
			\lim_{j\rightarrow+\infty}v_{t_0,B_j}(t)=\lim_{j\rightarrow+\infty}\int_{-t_0}^{t}b_{t_0,B_j}ds-t_0=\left\{ \begin{array}{lcl}
				-t_0 & \mbox{if}& x\in(-\infty,-t_0)\\
				t & \mbox{if}& x\in[-t_0,+\infty)
			\end{array}. \right.
		\end{equation*}
		Following from equality \eqref{eq:210908a}, inequality \eqref{219} and the Fatou's Lemma, we have
		\begin{equation}
			\label{eq:211106b}\begin{split}
				&\int_{\{\psi<-t_0\}}|F_1-F_{t_0}|^2e^{-\varphi-\psi-t_0}c(t_0)+\int_{\{-t_0\le\psi<-t_1\}}|F_1|^2e^{-\varphi}c(-\psi)\\
				=&\int_{\{\psi<-t_1\}}\lim_{k\rightarrow+\infty}|\tilde{F}_{j_k}-(1-b_{t_{0},B_{j_k}}(\psi))F_{t_{0}}|^{2}e^{-\varphi}e^{-\psi+v_{t_0,B_{j_k}}(\psi)}c(-v_{t_0,B_{j_k}}(\psi))
				\\
				\le&\liminf_{k\rightarrow+\infty}\int_{\{\psi<-t_1\}}|\tilde{F}_{j_k}-(1-b_{t_{0},B_{j_k}}(\psi))F_{t_{0}}|^{2}e^{-\varphi}e^{-\psi+v_{t_0,B_{j_k}}(\psi)}c(-v_{t_0,B_{j_k}}(\psi))
				\\
				\leq&
				\liminf_{k\rightarrow+\infty}(\frac{e^{t_{0}+B_{j_k}}\int_{t_1}^{t_{0}+B_{j_k}}c(t)e^{-t}dt}{\inf_{t\in(t_{0},t_{0}+B_{j_k})}c(t)}
				\times\frac{G(t_{0})-G(t_{0}+B_{j_k})}{B_{j_k}})\\
				=&\frac{e^{t_0}\int_{t_1}^{t_0}c(t)e^{-t}dt}{\lim_{t\rightarrow t_0+0}c(t)}\lim_{j\rightarrow +\infty}\frac{G(t_0)-G(t_0+B_j)}{B_j}\\
				=&\int_{t_1}^{t_0}c(t)e^{-t}dt\lim_{j\rightarrow +\infty}\frac{G(t_0)-G(t_0+B_j)}{\int_{t_0}^{t_0+B_j}c(t)e^{-t}dt}.
			\end{split}
		\end{equation}
		
		Since $c(t)e^{-t}$ is decreasing on $(T,+\infty)$, we have $e^{\psi}c(-\psi)\le e^{-t_0}c(t_0)$ on $\{\psi<-t_0\}$. It follows Lemma \ref{existence of F}, equality \eqref{eq:211106e} and inequality \eqref{eq:211106b} that
		\begin{equation}
			\label{eq:211106c}
			\begin{split}&\int_{t_1}^{t_0}c(t)e^{-t}dt\liminf_{B\rightarrow 0+0}\frac{G(t_0)-G(t_0+B)}{\int_{t_0}^{t_0+B}c(t)e^{-t}dt}\\
				=&\int_{t_1}^{t_0}c(t)e^{-t}dt\lim_{j\rightarrow +\infty}\frac{G(t_0)-G(t_0+B_j)}{\int_{t_0}^{t_0+B_j}c(t)e^{-t}dt}\\
				\ge&	\int_{\{\psi<-t_0\}}|F_1-F_{t_0}|^2e^{-\varphi-\psi-t_0}c(t_0)+\int_{\{-t_0\le\psi<-t_1\}}|F_1|^2e^{-\varphi}c(-\psi)\\
				\ge&\int_{\{\psi<-t_0\}}|F_1-F_{t_0}|^2e^{-\varphi}c(-\psi)+\int_{\{-t_0\le\psi<-t_1\}}|F_1|^2e^{-\varphi}c(-\psi)\\
				=&\int_{\{\psi<-t_1\}}|F_1|^2e^{-\varphi}c(-\psi)-\int_{\{\psi<-t_0\}}|F_{t_0}|^2e^{-\varphi}c(-\psi)\\
				\ge& G(t_1)-G(t_0).
			\end{split}
		\end{equation}
		This proves Lemma
		\ref{derivatives of G}.
	\end{proof}
	
	The following property of concave
	functions will be used in the proof of Theorem \ref{maintheorem}.
	\begin{Lemma}(see \cite{G16})
		Let $H(r)$ be a lower semicontinuous function on $(0,R]$. Then $H(r)$ is concave
		if and only if
		\begin{equation}\nonumber
			\begin{split}
				\frac{H(r_1)-H(r_2)}{r_1-r_2} \leq
				\liminf\limits_{r_3 \to r_2-0}
				\frac{H(r_3)-H(r_2)}{r_3-r_2}
			\end{split}
		\end{equation}
		holds for any $0<r_2<r_1 \leq R$.
		\label{characterization of concave function}
	\end{Lemma}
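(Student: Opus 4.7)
My plan is to treat the two directions of the equivalence separately, with the nontrivial content being a contradiction argument for the converse that exploits lower semicontinuity to produce an interior minimizer.

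For the forward direction, I assume $H$ is concave and invoke the standard three-slopes inequality: for any $0 < r_3 < r_2 < r_1 \le R$,
\[
\frac{H(r_2)-H(r_3)}{r_2-r_3} \;\ge\; \frac{H(r_1)-H(r_2)}{r_1-r_2}.
\]
Rewriting the left-hand side as $\frac{H(r_3)-H(r_2)}{r_3-r_2}$ (the two sign flips cancel) and passing to $\liminf$ as $r_3 \to r_2-0$ yields the stated inequality immediately.

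For the converse, I would argue by contradiction. Suppose $H$ is not concave on $(0,R]$. Then there exist $0<a<b\le R$ and $r_0\in(a,b)$ such that $H(r_0) < L(r_0)$, where $L(r):=H(a)+\frac{H(b)-H(a)}{b-a}(r-a)$ is the secant. Set $G:=H-L$; this is lower semicontinuous on the compact interval $[a,b]$, vanishes at both endpoints, and satisfies $G(r_0)<0$, so $G$ attains its infimum at some $r_2\in(a,b)$ with $G(r_2)<0$. For every $r_3\in[a,r_2)$ we have $G(r_3)\ge G(r_2)$, hence $\frac{G(r_3)-G(r_2)}{r_3-r_2}\le 0$, so
\[
\liminf_{r_3\to r_2-0}\frac{G(r_3)-G(r_2)}{r_3-r_2}\;\le\;0.
\]
On the other hand, linearity of $L$ gives $\frac{G(r_3)-G(r_2)}{r_3-r_2}=\frac{H(r_3)-H(r_2)}{r_3-r_2}-\frac{H(b)-H(a)}{b-a}$, and the hypothesis applied with $r_1:=b>r_2$ yields
\[
\liminf_{r_3\to r_2-0}\frac{G(r_3)-G(r_2)}{r_3-r_2}\;\ge\;\frac{H(b)-H(r_2)}{b-r_2}-\frac{H(b)-H(a)}{b-a}.
\]
A direct rearrangement shows that $H(r_2)<L(r_2)$ is equivalent to $\frac{H(b)-H(r_2)}{b-r_2}>\frac{H(b)-H(a)}{b-a}$, so the right-hand side is strictly positive. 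This contradicts the nonpositivity above, so no such $r_0$ exists and $H$ must be concave.

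The main obstacle I anticipate is ensuring that the minimizer $r_2$ lies in the open interval $(a,b)$ rather than at an endpoint, since the hypothesis controls only one-sided $\liminf$s with a strictly smaller point $r_3$ and a strictly larger point $r_1$. This is precisely where lower semicontinuity is used: it guarantees the infimum of $G$ on $[a,b]$ is attained, and since $G$ vanishes at the endpoints but is strictly negative at $r_0$, the minimizer is forced into the interior, where both $r_3\to r_2-0$ and $r_1=b>r_2$ are legal choices.
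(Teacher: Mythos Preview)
Your proof is correct. The paper does not supply its own proof of this lemma; it simply cites \cite{G16}. So there is nothing to compare against, and your argument stands on its own as a clean justification: the forward direction is the standard three-slopes inequality, and the converse is a contradiction via an interior minimizer of $G=H-L$, whose existence is exactly where lower semicontinuity is used.
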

	
	\subsection{Some results used in the proof of Theorem \ref{thm:1d-extension}}
	\
	
	In this section, we give some results which will be used in the proof of Theorem \ref{thm:1d-extension}.

	Let $M=\Omega$ be an open Riemann surface admitting a nontrivial Green function $G_{\Omega}$.  Let $\psi$ be a subharmonic function on $\Omega$ satisfying $T=-\sup_{\Omega}\psi=0$, and let $\varphi$ be a Lebesgue measurable function on $\Omega$, such that $\varphi+\psi$ is subharmonic on $\Omega$. Let $Z_0=z_0$ be a point in $\Omega$. Let $c(t)$ is a positive measurable function on $(0,+\infty)$,  satisfying that $c(t)e^{-t}$ is decreasing on $(0,+\infty)$ and $e^{-\varphi}c(-\psi)$ has locally positive lower bound on $\Omega\backslash Z_1$, where $Z_1\subset\{\psi=-\infty\}$ is a discrete point subset of $\Omega.$

	Let $w$ be a local coordinate on a neighborhood $V_{z_0}$ of $z_0\in\Omega$ satisfying $w(z_0)=0$. Set $f=w^kdw$ on $V_{z_0}$, where $f$ is the holomorphic $(1,0)$ form in the definition of $G(t)$.

	The following  theorem (see \cite{GY-concavity}) gives a characterization of $G(\hat{h}^{-1}(r))$ is linear with respect to $r\in(0,\int_{0}^{+\infty}c(l)e^{-l}dl)$. Set $d^c=\frac{1}{2\pi i}(\partial-\bar\partial)$.

	\begin{Theorem}
		\label{thm:1-d-linear}(see \cite{GY-concavity})
		Assume that $(\psi-2aG_{\Omega}(\cdot,z_0))(z_0)>-\infty$, where $a=\frac{1}{2}v(dd^{c}\psi,z_0)$, and  $G(0)\in(0,+\infty)$. Then $G(\hat{h}^{-1}(r))$ is linear with respect to $r$ if and only if the following statements hold:

		$(1)$ $\varphi+\psi=2\log|g|+2G_{\Omega}(\cdot,z_0)+2u$, $ord_{z_0}(g)=k$ and $\mathcal{F}_{z_0}=\mathcal{I}(\varphi+\psi)_{z_0}$, where $g$ is a holomorphic function on $\Omega$ and $u$ is a harmonic function on $\Omega$;
		
		$(2)$ $a>0$ and $\psi=2aG_{\Omega}(\cdot,z_0)$ on $\Omega$;
		
		$(3)$ $\chi_{-u}=\chi_{z_0}.$
	\end{Theorem}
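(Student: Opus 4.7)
The plan is to prove both implications separately.

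\textbf{Sufficiency (conditions $\Rightarrow$ linearity).} Given (1)--(3), I would set
\begin{equation*}
F_0 := c_0 \cdot g \cdot p_*\!\bigl(f_u\, df_{z_0}\bigr),
\end{equation*}
with $c_0$ chosen so that $(F_0 - w^k dw, z_0) \in \mathcal{I}(\varphi+\psi)_{z_0} \otimes \mathcal{O}(K_\Omega)_{z_0}$. The character identity $\chi_{-u}=\chi_{z_0}$ in (3) is exactly what forces the a priori multiplicative $(1,0)$ form $f_u\, df_{z_0}$ (carrying character $\chi_u\chi_{z_0}$) to descend to a single-valued $(1,0)$ form on $\Omega$. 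Substituting (1) into $e^{-\varphi}$ and (2) into $e^{2G_\Omega(\cdot,z_0)} = e^{\psi/a}$, and using $|df_{z_0}|^2 = 4\, e^{2G_\Omega(\cdot,z_0)}\, |\partial G_\Omega(\cdot,z_0)|^2$, the integrand collapses to
\begin{equation*}
|F_0|^2 e^{-\varphi}\, c(-\psi) \,=\, \frac{|c_0|^2}{a^2}\, e^{\psi}\, c(-\psi)\, |\partial\psi|^2.
\end{equation*}
The coarea formula along level sets $\{\psi=-s\}$, together with constancy of the flux $\int_{\{\psi=-s\}} d^c\psi$ (since $\psi$ is harmonic off $z_0$), then rewrites $\int_{\{\psi<-t\}} |F_0|^2 e^{-\varphi} c(-\psi)$ as $C_1 \int_t^{+\infty} c(s)\, e^{-s}\, ds = C_1 \hat h(t)$ for an explicit constant $C_1$. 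Minimality of $F_0$ follows from the orthogonality characterization of Lemma~\ref{existence of F}: for any holomorphic test form $h$ with $(h,z_0)\in \mathcal{F}_{z_0}\otimes K$ and finite weighted $L^2$ norm, the pairing $\int_\Omega F_0\,\bar h\, e^{-\varphi} c(-\psi)$ is verified to vanish by rewriting it, via the explicit factorization of $F_0$, as a Stokes integral whose boundary contributions are killed by the extra vanishing of $h$ at $z_0$. Thus $G(t)=C_1\hat h(t)$, i.e., $G(\hat h^{-1}(r))$ is linear.

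\textbf{Necessity (linearity $\Rightarrow$ conditions).} Assume $G(\hat h^{-1}(r))$ is linear. Corollary~\ref{linear case of G} supplies a unique holomorphic $(1,0)$ form $F$ on $\Omega$ with $(F-f,z_0)\in \mathcal{F}_{z_0}\otimes \mathcal{O}(K_\Omega)_{z_0}$ satisfying
\begin{equation*}
\int_{\{-t_1 \le \psi < -t_2\}} |F|^2 e^{-\varphi} a(-\psi) \,=\, C\int_{t_2}^{t_1} a(t)\, e^{-t}\, dt
\end{equation*}
for every nonnegative measurable $a$. Choosing $a(t)=e^t\mathbb{I}_{(t_2,t_1)}(t)$ and letting $t_2\to t_1$ extracts the pointwise identity $|F|^2 e^{-\varphi-\psi} = C\,|\partial\psi|^2$ on $\{d\psi\neq 0\}$; equivalently, $\varphi+\psi+\log|\partial\psi|^2-2\log|F|$ is constant on the regular set of $\psi$. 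Applying $dd^c$ to this identity and comparing singular parts forces $\psi$ to be harmonic off $z_0$ and its Lelong number at $z_0$ to equal $2a$. Together with the hypothesis $(\psi - 2aG_\Omega(\cdot,z_0))(z_0)>-\infty$ and the normalization $\sup_\Omega\psi = 0$, the maximum principle applied along a Green-function exhaustion of $\Omega$ forces $\psi=2aG_\Omega(\cdot,z_0)$, yielding (2). With (2), I would factor $F = c_0\, g\, F_1$ near $z_0$ where $g$ absorbs the order-$k$ zero of $F$ and $F_1$ is locally nonvanishing; globalizing the above identity for $\varphi+\psi$ then gives $\varphi+\psi = 2\log|g|+2G_\Omega(\cdot,z_0)+2u$ with $u$ harmonic, establishing (1). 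The equality $\mathcal{F}_{z_0}=\mathcal{I}(\varphi+\psi)_{z_0}$ follows from the uniqueness of the extremal $F$: any strictly larger ideal would enlarge the admissible class and strictly decrease $G(0)$. Finally, lifting $F_1$ to the universal cover and matching with $f_u\, df_{z_0}$ up to a character shows $F$ is single-valued on $\Omega$ iff $\chi_u\chi_{z_0}=1$, i.e., $\chi_{-u}=\chi_{z_0}$, giving (3).

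\textbf{Main obstacle.} The hardest step is the rigidity reduction from the measure-theoretic identity governing the $(-\psi)$-pushforward of $|F|^2 e^{-\varphi-\psi}$ to the global pointwise identity $\psi = 2aG_\Omega(\cdot,z_0)$. This step blends potential theory --- subharmonicity of $\psi$, prescribed Lelong number at $z_0$, and the boundary normalization $\sup_\Omega\psi = 0$ --- with the extremal uniqueness from Lemma~\ref{existence of F}, and I expect it to proceed through a Green-function exhaustion of $\Omega$ by relatively compact subdomains paired with a comparison argument for the bounded subharmonic function $\psi - 2aG_\Omega(\cdot,z_0)$.
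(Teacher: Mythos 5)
First, a point of reference: the paper does not actually prove Theorem \ref{thm:1-d-linear} — it is quoted verbatim from \cite{GY-concavity} — so the only in-paper material to compare with is the closely related argument in Sections \ref{sec:3.2}–\ref{sec:3.3}. Your sufficiency direction follows exactly that route (explicit candidate $c_0\,g\,p_{\star}(f_u\,df_{z_0})$, whose single-valuedness is precisely condition (3); level-set computation of the sublevel masses; orthogonality for minimality), and it is sound in outline, though the orthogonality/Stokes verification is the step that carries all the weight and is only asserted.

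The necessity direction, however, has a genuine gap at its first step. Corollary \ref{linear case of G} only tells you that the pushforward of the measure $|F|^2e^{-\varphi}dV$ under $-\psi$ is $C\,e^{-t}dt$; letting $t_2\to t_1$ with $a=e^{t}\mathbb{I}_{(t_2,t_1)}$ recovers, at best and only after a coarea disintegration, that the \emph{fiberwise integrals} $\int_{\{\psi=-t\}}|F|^2e^{-\varphi}|d\psi|^{-1}d\sigma$ equal $Ce^{-t}$ for a.e.\ $t$. This never yields the pointwise identity $|F|^2e^{-\varphi-\psi}=C|\partial\psi|^2$: pointwise values along a fixed level set are not determined by the pushforward. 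Worse, $\psi$ is only assumed subharmonic, so $\partial\psi$ is merely a current, the level sets need not be rectifiable, and $\log|\partial\psi|^2$ has no sign or positivity, so "applying $dd^c$ and comparing singular parts" to conclude that $\psi$ is harmonic off $z_0$ is essentially circular — it presupposes the regularity you are trying to establish. Even granting harmonicity off $z_0$ and Lelong number $2a$, the maximum principle does not force $\psi=2aG_{\Omega}(\cdot,z_0)$: $\psi=2aG_{\Omega}(\cdot,z_0)+h$ with $h\le0$ a nonconstant harmonic function is not excluded by $\sup_\Omega\psi=0$ and $(\psi-2aG_\Omega(\cdot,z_0))(z_0)>-\infty$ alone. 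The correct route to (2) is a comparison of weights rather than a pointwise PDE: by Lemma \ref{l:green} one has $\psi\le 2aG_{\Omega}(\cdot,z_0)$, hence $e^{-\varphi}c(-\psi)\le e^{-(\varphi+\psi-2aG_{\Omega}(\cdot,z_0))}c(-2aG_{\Omega}(\cdot,z_0))$ since $c(t)e^{-t}$ decreases; linearity (extremality within the concavity) forces equality of the two weights on the support of $|F|^2$, and an argument in the spirit of Lemma \ref{l:psi<G} then yields $\psi=2aG_{\Omega}(\cdot,z_0)$. Statements (1) and (3) are subsequently read off from the structure of the unique extremal form, and $\mathcal{F}_{z_0}=\mathcal{I}(\varphi+\psi)_{z_0}$ comes from sandwiching: $G(0)>0$ forces $\mathcal{F}_{z_0}\subseteq(w^{k+1})$ by Lemma \ref{G equal to 0}, while $\mathcal{F}_{z_0}\supseteq\mathcal{I}(\varphi+\psi)_{z_0}=(w^{k+1})$ once (1) is known — not from the assertion that a larger ideal would "strictly decrease $G(0)$".
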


	\begin{Lemma}(see \cite{S-O69}, see also \cite{Tsuji}).
		\label{l:green}
		$G_{\Omega}(z,z_0)=\sup_{v\in\Delta_0(z_0)}v(z)$, where $\Delta_0(z_0)$ is the set of negative subharmonic functions on $\Omega$ satisfying that $v-\log|w|$ has a locally finite upper bound near $z_0$.
	\end{Lemma}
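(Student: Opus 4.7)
The plan is to prove the two inequalities $\sup_{v\in\Delta_0(z_0)}v(z)\ge G_{\Omega}(z,z_0)$ and $\sup_{v\in\Delta_0(z_0)}v(z)\le G_{\Omega}(z,z_0)$ separately. For the easy direction I would verify that $G_{\Omega}(\cdot,z_0)$ itself belongs to $\Delta_0(z_0)$: by construction $G_{\Omega}(\cdot,z_0)$ is a negative subharmonic function on $\Omega$ (harmonic away from $z_0$), and $G_{\Omega}(z,z_0)-\log|w(z)|$ extends harmonically across $z_0$ and is therefore locally bounded, in particular locally bounded above, near $z_0$. This immediately gives $\sup_{v\in\Delta_0(z_0)}v(z)\ge G_{\Omega}(z,z_0)$.

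For the reverse inequality I would fix an arbitrary $v\in\Delta_0(z_0)$ and set $u:=v-G_{\Omega}(\cdot,z_0)$ on $\Omega\setminus\{z_0\}$. Then $u$ is subharmonic there, and by the defining property of $\Delta_0(z_0)$ combined with the logarithmic asymptotics of $G_{\Omega}$ at $z_0$, $u$ is bounded above in a punctured neighborhood of $z_0$. The standard removable-singularity theorem for subharmonic functions then lets $u$ extend to a subharmonic function on all of $\Omega$; equivalently $v-G_{\Omega}(\cdot,z_0)$ may be treated as subharmonic globally.

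To deduce $u\le0$ I would exhaust $\Omega$ by smoothly bounded, relatively compact subdomains $\Omega_n\uparrow\Omega$ with $z_0\in\Omega_1$, set $G_n:=G_{\Omega_n}(\cdot,z_0)$, and recall the classical facts that each $G_n$ extends continuously to $\overline{\Omega_n}\setminus\{z_0\}$ with vanishing boundary values and that $G_n\downarrow G_{\Omega}(\cdot,z_0)$ locally uniformly on $\Omega\setminus\{z_0\}$. On each $\Omega_n$, the extended $v-G_n$ is subharmonic and satisfies $\limsup_{z\to\partial\Omega_n}(v-G_n)(z)\le0$ because $v<0$ on $\Omega$ and $G_n$ vanishes on $\partial\Omega_n$; the maximum principle then gives $v\le G_n$ on $\Omega_n$, and passing to the limit $n\to\infty$ yields $v\le G_{\Omega}(\cdot,z_0)$ on $\Omega$.

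The main subtlety is justifying the decreasing-limit statement $G_n\downarrow G_{\Omega}(\cdot,z_0)$ and the continuous boundary behavior of each $G_n$; these are classical for smoothly bounded relatively compact subdomains of an open Riemann surface, so I would simply cite \cite{S-O69,Tsuji} at those points. An alternative route which entirely sidesteps exhaustion is to observe that one standard \emph{definition} of the Green function on a hyperbolic Riemann surface is exactly the Perron envelope described in the lemma, in which case the statement reduces to reconciling whichever definition of $G_{\Omega}$ is in force with the Perron definition; either approach yields the result with minimal additional work.
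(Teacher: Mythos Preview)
The paper does not give its own proof of this lemma; it is stated with a citation to \cite{S-O69} and \cite{Tsuji} as a known classical fact, so there is no in-paper argument to compare against. Your proposal is the standard Perron-envelope argument and is correct: membership $G_{\Omega}(\cdot,z_0)\in\Delta_0(z_0)$ gives one inequality, and for the other your exhaustion-plus-maximum-principle step (applied to $v-G_n$, extended across $z_0$ by the removable singularity theorem for subharmonic functions bounded above) is exactly the classical route.
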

	
	\begin{Lemma}\label{l:G-compact}(see \cite{GY-concavity})
		Let $\Omega$ be an open Riemann surface which
		admits a nontrivial Green function $G_{\Omega}$. Let $z_0\in \Omega$, and let $U$ be  a open neighborhood of $z_0$. Then there exists $t>0$ such that $\{G_{\Omega}(z,z_0)<-t\}$ is a relatively compact subset of $U$.
	\end{Lemma}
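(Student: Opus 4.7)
The plan is to reduce the assertion to a uniform lower bound on $G_\Omega(\cdot,z_0)$ outside a small coordinate disk. Given a local coordinate $w$ on a neighborhood of $z_0$ with $w(z_0)=0$, I would pick $r>0$ small enough that $\overline{D_r}:=\{|w|\leq r\}$ is a compact subset of $U$. Since $G_\Omega(\cdot,z_0)$ is harmonic (hence continuous) on $\Omega\setminus\{z_0\}$, the quantity $m:=\min_{\partial D_r}G_\Omega(\cdot,z_0)$ is finite and strictly negative. If one can show $G_\Omega(\cdot,z_0)\geq m$ on all of $\Omega\setminus D_r$, then for any $t>-m$ the set $\{G_\Omega(\cdot,z_0)<-t\}$ lies inside $D_r$ and is therefore a relatively compact subset of $U$, which finishes the proof.

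To produce this lower bound I would invoke the classical exhaustion method. Fix relatively compact open sets $\Omega_1\subset\subset\Omega_2\subset\subset\cdots$ with smooth boundary, $\overline{D_r}\subset\Omega_1$, and $\bigcup_n\Omega_n=\Omega$. The Dirichlet Green functions $G_n:=G_{\Omega_n}(\cdot,z_0)$ are harmonic on $\Omega_n\setminus\{z_0\}$, differ from $\log|w|$ by a bounded harmonic remainder near $z_0$, and vanish on $\partial\Omega_n$. Applying the maximum principle to the difference $G_n-G_\Omega$ on the relatively compact $\Omega_n$ (the log poles cancel so it is harmonic, and its boundary values $-G_\Omega|_{\partial\Omega_n}$ are nonnegative) yields $G_n\geq G_\Omega$ on $\Omega_n$, and the same comparison between $G_n$ and $G_{n+1}$ shows that $G_n$ decreases in $n$. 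The pointwise limit $\tilde G:=\lim_n G_n$ is then a negative subharmonic function on $\Omega$ with the correct logarithmic singularity at $z_0$: Harnack's principle yields harmonicity on $\Omega\setminus\{z_0\}$, and the uniform two-sided bound on $G_n-\log|w|$ near $z_0$ (bounded above by $G_1-\log|w|$, below by $G_\Omega-\log|w|$) preserves the singularity in the limit. By Lemma \ref{l:green} we then have $\tilde G\leq G_\Omega$, so $\tilde G=G_\Omega$ and $G_n\downarrow G_\Omega$ pointwise on $\Omega$.

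With the monotone convergence in hand, the lower bound is immediate: on the relatively compact annular region $\Omega_n\setminus\overline{D_r}$, the function $G_n$ is harmonic, equals $0$ on $\partial\Omega_n$, and satisfies $G_n\geq G_\Omega\geq m$ on $\partial D_r$; the minimum principle therefore gives $G_n\geq m$ throughout $\Omega_n\setminus\overline{D_r}$. Letting $n\to\infty$ and using $G_n\downarrow G_\Omega$ yields $G_\Omega\geq m$ on $\Omega\setminus\overline{D_r}$, and the same inequality on $\partial D_r$ holds by the definition of $m$. The main obstacle in the plan is the identification of the exhaustion limit with $G_\Omega$ itself, i.e.\ the convergence $G_n\downarrow G_\Omega$; once this standard potential-theoretic fact is established via Lemma \ref{l:green} as above, the remainder of the proof reduces to routine applications of the minimum principle on compact subdomains.
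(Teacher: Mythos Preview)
The paper does not contain its own proof of this lemma; it simply cites \cite{GY-concavity}. Your argument is correct and is essentially the standard potential-theoretic proof: reducing to the lower bound $G_\Omega(\cdot,z_0)\geq m:=\min_{\partial D_r}G_\Omega(\cdot,z_0)$ on $\Omega\setminus D_r$, establishing $G_{\Omega_n}\downarrow G_\Omega$ via the maximum principle and the extremal characterization of Lemma~\ref{l:green}, and then applying the minimum principle on each $\Omega_n\setminus\overline{D_r}$. All steps are carried out correctly, including the verification that the limit $\tilde G$ retains the logarithmic pole (via the uniform two-sided bound $G_\Omega-\log|w|\leq G_n-\log|w|\leq G_1-\log|w|$) so that Lemma~\ref{l:green} applies to force $\tilde G=G_\Omega$.
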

	
	\begin{Lemma}
		\label{l:psi<G}
		Let $\Omega$ be an open Riemann surface which
		admits a nontrivial Green function $G_{\Omega}$. Let $z_0\in\Omega$, and let $\psi<0$ be a subharmonic function on $\Omega$ satisfying $\frac{1}{2}v(dd^c\psi,z_0)\geq k+1$. Let $l(t)$ is a positive Lebesgue measurable function on $(0,+\infty)$ satisfying $l$ is decreasing on $(0,+\infty)$ and $\int_0^{+\infty}l(t)dt<+\infty$. If $\psi\not=2(k+1)G_{\Omega}(\cdot,z_0)$, then there exists a Lebesgue measurable subset $V$ of $\Omega$, such that  $l(-\psi(z))<l(-2(k+1)G_{\Omega}(z,z_0))$ for any  $z\in V$ and $\mu(I)>0$, where $\mu$ is the Lebesgue measure on $\Omega$.
	\end{Lemma}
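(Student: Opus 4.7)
\medskip
\noindent\textbf{Proof proposal for Lemma~\ref{l:psi<G}.}

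The plan is first to upgrade the hypothesis $\psi\neq 2(k+1)G_{\Omega}(\cdot,z_0)$ to the strict pointwise inequality $\psi<2(k+1)G_{\Omega}(\cdot,z_0)$ on all of $\Omega$, and then to sweep a convenient level value through the range of $-2(k+1)G_{\Omega}(\cdot,z_0)$.

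\emph{Step 1 (the main obstacle): strict separation.} Set $\tilde h:=\psi-2(k+1)G_{\Omega}(\cdot,z_0)$. The hypothesis $\frac{1}{2}v(dd^{c}\psi,z_0)\geq k+1$ says that the singular part of $dd^{c}\psi$ at $z_0$ dominates that of $2(k+1)dd^{c}G_{\Omega}(\cdot,z_0)$; away from $z_0$, $G_{\Omega}(\cdot,z_0)$ is harmonic, so $dd^{c}\tilde h$ is a nonnegative measure on $\Omega$. After extending $\tilde h$ at $z_0$ by upper semicontinuity, $\tilde h$ becomes a subharmonic function on $\Omega$. Applying Lemma~\ref{l:green} to $\psi/(2(k+1))$, which is a negative subharmonic function on $\Omega$ with $\psi/(2(k+1))-\log|w|$ bounded above near $z_0$ by the Lelong number assumption, we get $\psi/(2(k+1))\leq G_{\Omega}(\cdot,z_0)$, i.e.\ $\tilde h\leq 0$. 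Since $\psi\neq 2(k+1)G_{\Omega}(\cdot,z_0)$, $\tilde h\not\equiv 0$, and the maximum principle on the connected open Riemann surface $\Omega$ forces $\tilde h<0$ everywhere. In particular $\varphi_1:=-\psi>\varphi_2:=-2(k+1)G_{\Omega}(\cdot,z_0)$ pointwise on $\Omega$.

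\emph{Step 2: structure of $\varphi_2$.} The function $\varphi_2$ is harmonic and non-constant on $\Omega\setminus\{z_0\}$ with $\varphi_2(z)\to+\infty$ as $z\to z_0$ and $\inf_{\Omega\setminus\{z_0\}}\varphi_2=0$ (since $\sup G_{\Omega}(\cdot,z_0)=0$ is not attained), so by continuity on the connected set $\Omega\setminus\{z_0\}$ it surjects onto $(0,+\infty)$. Locally $\varphi_2$ is the real part of a non-constant holomorphic function, hence an open map, even at its critical points (via the usual branched-covering picture).

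\emph{Step 3: choosing a good level $s^{*}$.} Because $l$ is positive, decreasing, and satisfies $\int_{0}^{+\infty}l(t)dt<+\infty$, $l$ cannot be constant on any right half-line, and in particular is not locally constant at every point of $(0,+\infty)$. Pick any $s^{*}\in(0,+\infty)$ at which $l$ is not locally constant; being decreasing, this gives $l(s^{*}-\epsilon)>l(s^{*}+\epsilon)$ for every $\epsilon>0$.

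\emph{Step 4: producing $V$.} By Step 2, choose $z^{*}\in\varphi_2^{-1}(s^{*})$. Step 1 gives $\varphi_1(z^{*})>s^{*}$, and since $\varphi_1=-\psi$ is lower semicontinuous, the strict inequality $\varphi_1>s^{*}$ persists on some open neighborhood $W$ of $z^{*}$. By openness of $\varphi_2$ at $z^{*}$, $\varphi_2(W)\supset(s^{*}-\eta,s^{*}+\eta)$ for some $\eta>0$. Set
\[
V:=W\cap\{\varphi_2<s^{*}\},
\]
which is a non-empty open subset of $\Omega$, so $\mu(V)>0$. For any $z\in V$ one has $\varphi_2(z)<s^{*}<\varphi_1(z)$; setting $\epsilon:=\min(s^{*}-\varphi_2(z),\varphi_1(z)-s^{*})>0$ and using monotonicity of $l$ together with Step 3 yields
\[
l(\varphi_2(z))\geq l(s^{*}-\epsilon)>l(s^{*}+\epsilon)\geq l(\varphi_1(z)),
\]
which is the desired strict inequality on $V$.

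The heart of the argument is Step 1: once strict negativity of $\tilde h$ is in hand, the problem reduces from the possibly delicate geometry of $\{\psi<2(k+1)G_{\Omega}(\cdot,z_0)\}$ to a one-variable statement about $\varphi_2$ sweeping through $(0,+\infty)$, which combines cleanly with the one chosen level $s^{*}\notin L$ of strict decrease of $l$.
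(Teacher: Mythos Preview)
Your proof is correct. The key Step~1 coincides with the paper's: both obtain the strict inequality $\psi<2(k+1)G_{\Omega}(\cdot,z_0)$ on all of $\Omega$ from the Lelong-number hypothesis (via Siu decomposition / Lemma~\ref{l:green}) and the maximum principle. The paper then diverges from you in how it manufactures $V$. Rather than using the open-mapping property of the harmonic function $\varphi_2$ at a single point, the paper invokes Lemma~\ref{l:G-compact} to get a compact sublevel set $\{2(k+1)G_{\Omega}(\cdot,z_0)\le -t_1\}$, uses upper semicontinuity of $\psi$ on that compact set to obtain a \emph{uniform} gap $\sup\psi<-t_1$, and then takes $V$ to be the annulus $\{-t_1<2(k+1)G_{\Omega}(\cdot,z_0)<-t_2\}$. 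This yields a single pair of constants $l(t_3)<l(t_1)$ bounding $l(-\psi)$ and $l(-2(k+1)G_{\Omega})$ on all of $V$, whereas your $V$ is a small open patch near one preimage point and the separation is pointwise via the chosen $s^{*}$. Your route avoids the compactness lemma entirely and is slightly more elementary; the paper's route gives the extra uniform control $l(-\psi)\le l(t_3)<l(t_1)\le l(-2(k+1)G_{\Omega})$ on $V$, which is more than the lemma asks for but pleasant to have. Both are perfectly adequate for the application in Section~\ref{sec:3.2}.
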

	\begin{proof}
		It follows from Lemma \ref{l:G-compact} that there exists $t_0>0$ such that $\{z\in\Omega:2(k+1)G_{\Omega}(z,z_0)<-t_0\}\subset\subset\Omega$. As $l$ is decreasing and $\int_0^{+\infty}l(t)dt<+\infty$, then there exists $t_1>t_0$ such that $l(t)<l(t_1)$ holds for any $t>t_1.$
		
		Following from Siu's Decomposition Theorem, $\psi\not=2(k+1)G_{\Omega}(\cdot,z_0)$ and Lemma \ref{l:green}, we know $\psi-2(k+1)G_{\Omega}(z,z_0)$ is a negative subharmonic function on $\Omega$. By $\psi$ is upper semicontinuous, then we have $\sup_{z\in\Omega:\{2(k+1)G_{\Omega}(z,z_0)\leq-t_1\}}\psi(z)<-t_1$. Thus there exists  $t_2\in(t_0,t_1)$ such that $\sup_{\{z\in\Omega:2(k+1)G_{\Omega}(z,z_0)\leq-t_2\}}\psi(z)<-t_1$. Denote $t_3:=-\sup_{z\in\Omega:\{2(k+1)G_{\Omega}(z,z_0)\leq-t_2\}}\psi(z)$. Let $V=\{z\in\Omega:-t_1<2(k+1)G_{\Omega}(z,z_0)<-t_2\}$, then $\mu(V)>0$. As $l(t)$ is decreasing on $(0,+\infty)$, for any $z\in V$, we have
		$$l(-\psi(z))\leq l(t_3)<l(t_1)\leq l(-2(k+1)G_{\Omega}(z,z_0)).$$
		Thus, Lemma \ref{l:psi<G} holds.
	\end{proof}

	\begin{Lemma}
		\label{l:chi}
		Let $\Omega$ be an open Riemann surface which
		admits a nontrivial Green function $G_{\Omega}$. Then there exists a harmonic function $u_{z_0}$ on $\Omega$ such that $\chi_{z_0}=\chi_{u_{z_0}}$ for any $z_0\in\Omega$.
	\end{Lemma}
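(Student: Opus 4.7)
The plan is to construct $u_{z_0}$ explicitly by subtracting a single-valued logarithm from $G_{\Omega}(\cdot,z_0)$, removing the logarithmic pole at $z_0$ without disturbing the character. The key input is that every divisor on an open Riemann surface is principal.

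First I would invoke the Behnke--Stein theorem to identify $\Omega$ as a Stein manifold. Combined with Cartan's Theorem B (giving $H^{q}(\Omega,\mathcal{O})=0$ for $q\geq 1$) and the vanishing $H^{2}(\Omega,\mathbb{Z})=0$ for a non-compact real surface, the exponential sheaf sequence yields $\operatorname{Pic}(\Omega)=0$. In particular, the divisor $[z_0]$ is principal, so there exists a holomorphic function $\phi$ on $\Omega$ with a simple zero at $z_0$ and no other zeros. Set
\[
u_{z_0}\;:=\;G_{\Omega}(\cdot,z_0)\;-\;\log|\phi|
\]
on $\Omega\setminus\{z_0\}$. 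In a local coordinate $w$ centered at $z_0$, the definition of $c_{\beta}(z_0)$ gives $G_{\Omega}(\cdot,z_0)=\log|w|+h$ with $h$ harmonic near $z_0$, while $\phi=w\cdot g$ with $g$ holomorphic and nowhere vanishing near $z_0$; hence $u_{z_0}=h-\log|g|$ extends across $z_0$ as a harmonic function. Thus $u_{z_0}$ is a globally defined harmonic function on $\Omega$.

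To identify its character, I would consider on the universal cover $p:\Delta\to\Omega$ the function $\tilde f:=f_{z_0}/(p^{\star}\phi)$. The zeros of $p^{\star}\phi$ coincide, with the same multiplicity one, with those of $f_{z_0}$, namely the discrete set $p^{-1}(z_0)$, so $\tilde f$ is holomorphic and nowhere vanishing on $\Delta$, with $|\tilde f|=p^{\star}e^{u_{z_0}}$. Because $\phi$ is single-valued on $\Omega$, $p^{\star}\phi$ is invariant under every deck transformation $g$, so
\[
g^{\star}\tilde f\;=\;\frac{g^{\star}f_{z_0}}{g^{\star}p^{\star}\phi}\;=\;\chi_{z_0}(g)\,\tilde f.
\]
Hence $\tilde f\in\mathcal{O}^{\chi_{z_0}}(\Omega)$, and by the uniqueness (up to a unimodular constant) of the multiplicative function attached to a harmonic function via $|f_u|=p^{\star}e^{u}$, we conclude $\chi_{u_{z_0}}=\chi_{z_0}$.

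The only non-routine ingredient is the production of $\phi$, i.e.\ the triviality of $\operatorname{Pic}(\Omega)$; once this classical consequence of Behnke--Stein is granted, the harmonicity check and the character identification are direct.
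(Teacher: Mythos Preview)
Your proof is correct and follows essentially the same approach as the paper: define $u_{z_0}=G_{\Omega}(\cdot,z_0)-\log|\phi|$ for a holomorphic $\phi$ on $\Omega$ with a simple zero at $z_0$ and no other zeros, then read off the character from the multiplicative function $f_{z_0}/p^{\star}\phi$. The only difference is cosmetic: you justify the existence of $\phi$ via Behnke--Stein and $\operatorname{Pic}(\Omega)=0$, while the paper simply invokes the Weierstrass theorem on open Riemann surfaces (Forster \cite{OF81}), which is the same classical fact packaged differently.
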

	\begin{proof}
		Let $f_{z_0}$ be a holomorphic function on $\Delta$ such that $|f_{z_0}(z)|=p^{\star}e^{G_{\Omega}(z,z_0)}$. As $\Omega$ be an open Riemann surface, it follows from Weierstrass Theorem on open Riemann surfaces (see \cite{OF81}) that there exists a holomorphic function $\tilde{f}$ on $\Omega$ satisfying $\{z\in\Omega:\tilde{f}(z)=0\}=\{z_0\}$ and $d\tilde{f}(z_0)\not=0$. Let $u_{z_0}=\frac{p_{\star}(\log|f_{z_0}|)}{\log|\tilde{f}|}$. Note that
		$$e^{G_{\Omega}(\cdot,z_0)-u}=|\tilde{f}|,$$
		then we have $\chi_{z_0}=\chi_{u_{z_0}}$.
	\end{proof}

	Let $\Omega$ be an open Riemann surface with a Green function. Let $p:\Delta\rightarrow\Omega$ be the universal covering of $\Omega$. We can choose $V_{z_0}$ small enough, such that $p$ restricted on any component of $p^{-1}(V_{z_0})$ is biholomorphic. Let $\rho_1=e^{-2h_1}$, where $h_1$ is harmonic on $\Omega$. There exists a multiplicative holomorphic function $f_{h_1}$ on $\Delta$, such that $|f_{h_1}|=p^{\star}e^{h_1}$. Let $f_{-h_1}:=f_{h_1}^{-1}$. Let $f_{-h_1,j}:=f_{-h_1}|_{U_j}$ and $p_j:=p|_{U_j},$ where $U_j$ is a component of $p^{-1}(V_{z_0})$ for any fixed $j.$
	
	\begin{Lemma}\label{l:d}
		(see \cite{guan-zhou13ap}) Let $\Omega$ be an open Riemann surface which
		admits a nontrivial Green function $G_{\Omega}$. Let $z_0\in\Omega$, and let $V_{z_0}$ be a neighborhood of $z_0$ with local coordinate $w$ such that $w(z_0)=0.$ Assume that there is a negative subharmonic function $\Psi$ on $\Omega$, such that $\Psi|_{V_{z_0}}=\log|w|^2$ and $\Psi|_{\Omega\backslash V_{z_0}}\geq\sup_{z\in V_{z_0}}\Psi.$ Let $d_1(t)$ and $d_2(t)$ be two positive continuous functions on $(0,+\infty)$, which satisfy
		$$\int_0^{+\infty}d_1(t)e^{-t}dt=\int_0^{+\infty}d_2(t)e^{-t}dt<+\infty,$$
		$$d_1(t)|_{\{t>r_1\}\cup\{t<r_3\}}=d_2(t)|_{\{t>r_1\}\cup\{t<r_3\}},$$
		$$d_1(t)|_{\{r_2<t<r_1\}}>d_2(t)|_{\{r_2<t<r_1\}},$$
		and
		$$d_1(t)|_{\{r_3<t<r_2\}}>d_2(t)|_{\{r_3<t<r_2\}},$$
		where $0<r_3<r_2<r_1<+\infty.$ Assume that $\{\psi<-r_3+1\}\subset\subset V_{z_0}$, which is a disc with the coordinate $w$. Let $F$ be a holomorphic $(1,0)$ form, which satisfies $((p_j)_{\star}(f_{-h_1,j}))F|_{z_0}=dw.$ Then we have
		$$\int_{\Omega}|F|^2\rho_1d_1(-\Psi)\leq\int_{\Omega}|F|^2\rho_1d_2(-\Psi).$$
		Moreover, the equality holds if and only if $((p_j)_{\star}(f_{-h_1,j}))F|_{V_{z_0}}=dw.$
	\end{Lemma}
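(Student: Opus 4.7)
The plan is to reduce the integral inequality to a statement about Taylor coefficients of a single holomorphic function on $V_{z_0}$ and then prove it by a rearrangement trick pivoted at $t=r_2$.

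First, since $d_1$ and $d_2$ agree outside $[r_3,r_1]$, the difference $\int_\Omega |F|^2 \rho_1\,(d_2-d_1)(-\Psi)$ is supported on $\{\Psi<-r_3\}\subset\{\Psi<-r_3+1\}\subset\subset V_{z_0}$, so one is really integrating over a disc in the coordinate $w$ where $-\Psi=-2\log|w|$. On $V_{z_0}$, write $F=F_1(w)\,dw$ and set $g(w):=F_1(w)\cdot(p_j)_\star(f_{-h_1,j})(w)$. Since $|f_{-h_1}|=p^\star e^{-h_1}$ and $p_j$ is a biholomorphism from a component of $p^{-1}(V_{z_0})$ onto $V_{z_0}$, $(p_j)_\star(f_{-h_1,j})$ is single-valued and holomorphic on $V_{z_0}$ with $|(p_j)_\star(f_{-h_1,j})|^2=e^{-2h_1}=\rho_1$. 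Hence $|F|^2\rho_1=|g(w)|^2|dw|^2$, $g\in\mathcal O(V_{z_0})$, and the normalization $((p_j)_\star(f_{-h_1,j}))F|_{z_0}=dw$ forces $g(z_0)=1$.

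Next, expand $g(w)=\sum_{n\ge 0}a_n w^n$ with $a_0=1$, pass to polar coordinates $w=re^{i\theta}$, integrate out the angular variable using orthogonality of $\{e^{in\theta}\}$, and substitute $t=-2\log r$. A routine computation gives
\[
\int_\Omega |F|^2 \rho_1\,(d_2-d_1)(-\Psi) \;=\; 2\pi\sum_{n=0}^{\infty}|a_n|^2\int_{r_3}^{r_1} e^{-(n+1)t}(d_2-d_1)(t)\,dt.
\]
The $n=0$ term vanishes by the equal-integrals hypothesis. For $n\ge 1$, set $\mu(t):=e^{-t}(d_2-d_1)(t)$, which integrates to $0$ over $(r_3,r_1)$ and (as forced by $\int\mu=0$ together with the sign conditions on $d_1-d_2$) is of one sign on $(r_3,r_2)$ and the opposite sign on $(r_2,r_1)$, switching at the pivot $t=r_2$. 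Then
\[
\int_{r_3}^{r_1}e^{-nt}\mu(t)\,dt \;=\; \int_{r_3}^{r_1}\bigl(e^{-nt}-e^{-nr_2}\bigr)\mu(t)\,dt,
\]
and on each of the subintervals $(r_3,r_2)$ and $(r_2,r_1)$ the factor $e^{-nt}-e^{-nr_2}$ has the same sign as $\mu(t)$ because $e^{-nt}$ is strictly decreasing for $n\ge 1$. Hence every $n\ge 1$ term is nonnegative, and strictly positive unless $a_n=0$, so the full sum is $\ge 0$.

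This yields $\int_\Omega|F|^2\rho_1\,d_1(-\Psi)\le\int_\Omega|F|^2\rho_1\,d_2(-\Psi)$, with equality forcing $a_n=0$ for all $n\ge 1$; that is, $g\equiv 1$ on $V_{z_0}$, which rewrites as $(p_j)_\star(f_{-h_1,j})F|_{V_{z_0}}=dw$. I expect the main technical point to be the second paragraph above: correctly identifying $|F|^2\rho_1$ on $V_{z_0}$ as $|g|^2|dw|^2$ for a single-valued holomorphic $g$ pinned by $g(z_0)=1$. This requires combining the construction of the multiplicative function $f_{-h_1}$, the local triviality of the universal covering over $V_{z_0}$ via the biholomorphism $p_j$, and the normalization of $F$ at $z_0$. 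Once that identification is in place, the remainder is a clean Fourier/Taylor expansion together with a pivot-point rearrangement argument, both entirely standard.
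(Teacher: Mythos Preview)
Your argument is correct and is essentially the standard proof of this lemma. The present paper does not prove it at all---the statement is quoted verbatim from \cite{guan-zhou13ap} and only invoked in Section~\ref{sec:3.3}---so there is no ``paper's own proof'' to compare against here; your Fourier/Taylor expansion on the disc followed by the pivot trick at $t=r_2$ is exactly the natural route and almost certainly coincides with the original argument in \cite{guan-zhou13ap}.

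Two small points worth tightening. First, note that the lemma as printed contains a typo: both displayed sign conditions read $d_1>d_2$, which together with $\int_0^{\infty}(d_1-d_2)e^{-t}\,dt=0$ is inconsistent. The intended hypothesis is $d_1>d_2$ on $(r_2,r_1)$ and $d_1<d_2$ on $(r_3,r_2)$ (or the symmetric choice, with the conclusion reversed). You implicitly use this, but your phrase ``as forced by $\int\mu=0$ together with the sign conditions'' does not actually pin down \emph{which} interval carries which sign; you should state explicitly that $\mu=e^{-t}(d_2-d_1)>0$ on $(r_3,r_2)$ and $\mu<0$ on $(r_2,r_1)$, so that $\mu$ and $e^{-nt}-e^{-nr_2}$ share a sign on each subinterval. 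Second, in the equality case you conclude $g\equiv 1$ ``on $V_{z_0}$'' from $a_n=0$ for $n\ge 1$; strictly speaking this gives $g\equiv 1$ on the disc $\{\Psi<-r_3+1\}$, and you should invoke the identity theorem (connectedness of $V_{z_0}$) to propagate it to all of $V_{z_0}$.
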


	\section{Proofs of Theorem \ref{maintheorem}, Remark \ref{infty2}  and Corollary \ref{linear case of G}}\label{sec:Proof-main}
	In this section, we prove Theorem \ref{maintheorem}, Remark \ref{infty2} and Corollary \ref{linear case of G}.
	\subsection{A proof of Theorem \ref{maintheorem}}
	\
	
	Firstly, we prove that if $G(t_0)<+\infty$ for some $t_0>T$, then $G(t_1)<+\infty$ for any $t_1\in(T,t_0)$. It follows from Lemma \ref{existence of F} that there exists a holomorphic $(n,0)$ form $F_{t_0}$ on $\{\psi<-t_0\}$ satisfying $(F_{t_0}-f)\in H^{0}(Z_0,(\mathcal{O}(K_M)\otimes\mathcal{F})|_{Z_0})$ and $\int_{\{\psi<-t_0\}}|F_{t_0}|^2e^{-\varphi}c(-\psi)=G(t_0)<+\infty$. It follows from Lemma \ref{lemma 2.2} that we have a holomorphic $(n,0)$ form $\tilde{F}$ on $\{\psi<-t_1\}$, such that
	$$(\tilde{F}-F_{t_0})\in H^{0}(Z_0,(\mathcal{O}(K_M)\otimes\mathcal{I}(\varphi+\psi)|_{Z_0}))\subset H^{0}(Z_0,(\mathcal{O}(K_M)\otimes\mathcal{F})|_{Z_0})$$
	and
	\begin{equation}\
		\begin{split}
			& \int_{\{\psi<-t_1\}}|\tilde{F}-(1-b_{t_0,B}(\psi))F_{t_0}|^2e^{-\varphi}c(-\psi) \\
			\le&\int_{\{\psi<-t_1\}}|\tilde{F}-(1-b_{t_0,B}(\psi))F_{t_0}|^2
			e^{-\varphi-\psi+v_{t_0,B}(\psi)}c(-v_{t_0,B}(\psi))\\
			\le &\int_{t_1}^{t_0+B}c(t)e^{-t}dt \int_{\{\psi<-t_1\}}\frac{1}{B}\mathbb{I}_{\{-t_0-B<\psi<-t_0\}}|F_{t_0}|^2e^{-\varphi-\psi}.
			\label{proof of main theorem 1}
		\end{split}
	\end{equation}
	Note that
	\begin{equation}\nonumber
		\begin{split}
			& (\int_{\{\psi<-t_1\}}|\tilde{F}|^2e^{-\varphi}c(-\psi))^{\frac{1}{2}}
			-(\int_{\{\psi<-t_1\}}|(1-b_{t_0,B}(\psi))F_{t_0}|^2e^{-\varphi}c(-\psi))^{\frac{1}{2}}\\
			\le& \int_{\{\psi<-t_1\}}|\tilde{F}-(1-b_{t_0,B}(\psi))F_{t_0}|^2e^{-\varphi}c(-\psi).
		\end{split}
	\end{equation}
	Combining with inequality \eqref{proof of main theorem 1}, we obtain
	\begin{equation}\
		\begin{split}
			(\int_{\{\psi<-t_1\}}|\tilde{F}|^2e^{-\varphi}c(-\psi))^{\frac{1}{2}}
			\le& (\int_{t_1}^{t_0+B}c(t)e^{-t}dt \int_{\{\psi<-t_1\}}\frac{1}{B}\mathbb{I}_{\{-t_0-B<\psi<-t_0\}}|F_{t_0}|^2e^{-\varphi-\psi})^{\frac{1}{2}}\\
			+& (\int_{\{\psi<-t_1\}}|(1-b_{t_0,B}(\psi))F_{t_0}|^2e^{-\varphi}c(-\psi))^{\frac{1}{2}}.
		\end{split}
	\end{equation}
	Since $b_{t_0,B}(\psi)=1$ on $\{\psi\ge t_0\}$, $0\le b_{t_0,B}(\psi)\le 1$, $\int_{\{\psi<-t_0\}}|F_{t_0}|^2e^{-\varphi}c(-\psi)<+\infty$, and $c(t)$ has positive lower bound on any compact subset of $(T,+\infty)$, then
	$$ (\int_{\{\psi<-t_1\}}|(1-b_{t_0,B}(\psi))F_{t_0}|^2e^{-\varphi}c(-\psi))^{\frac{1}{2}}<+\infty$$
	and
	\begin{equation}\nonumber
		\begin{split}
			&\int_{t_1}^{t_0+B}c(t)e^{-t}dt \int_{\{\psi<-t_1\}}\frac{1}{B}\mathbb{I}_{\{-t_0-B<\psi<-t_0\}}|F_{t_0}|^2e^{-\varphi-\psi}\\
			\le& \frac{e^{t_0+B}\int_{t_1}^{t_0+B}c(t)e^{-t}dt}{\inf\limits_{t\in(t_0,t_0+B)}c(t)}
			\int_{\{\psi<-t_1\}}\frac{1}{B}\mathbb{I}_{\{-t_0-B<\psi<-t_0\}}|F_{t_0}|^2e^{-\varphi-\psi}
			<+\infty.
		\end{split}
	\end{equation}
	Hence we have
	$$\int_{\{\psi<-t_1\}}|\tilde{F}|^2e^{-\varphi}c(-\psi)<+\infty.$$
	Then we obtain that $G(t_1)\le \int_{\{\psi<-t_1\}}|\tilde{F}|^2e^{-\varphi}c(-\psi)<+\infty$.
	
	Now, assume that $G(t_0)<+\infty$ for some $t_0\ge T$. As $G(h^{-1}(r))$ is lower semicontinuous (Lemma \ref{semicontinuous}), then Lemma \ref{derivatives of G} and Lemma \ref{characterization of concave function} imply the concavity of $G(h^{-1}(r))$. It follows from Lemma \ref{semicontinuous} that $\lim\limits_{t\to T+0}G(t)=G(T)$ and $\lim\limits_{t\to+\infty}G(t)=0$.
	
	Theorem \ref{maintheorem} is proved.

	\subsection{A proof of Remark \ref{infty2}}\label{sec:4.2}
	\
	
	Note that if there exists a positive decreasing concave function $g(t)$ on $(a,b)\subset\mathbb{R}$ and $g(t)$ is not a constant function, then $b<+\infty$.
	
	Assume that $G(t_0)<+\infty$ for some $t_0\geq T$, as $f\not\in H^0(Z_0,(\mathcal{O}(K_M)\otimes\mathcal{F})|_{Z_0})$, Lemma \ref{G equal to 0} shows that $G(t_0)\in(0,+\infty)$. Following from Theorem \ref{maintheorem} we know $G({h}^{-1}(r))$ is concave with respect to $r\in(\int_{T_1}^{T}c(t)e^{-t}dt,\int_{T_1}^{+\infty}c(t)e^{-t}dt)$ and $G({h}^{-1}(r))$ is not a constant function, therefore we obtain $\int_{T_1}^{+\infty}c(t)e^{-t}dt<+\infty$, which contradicts to $\int_{T_1}^{+\infty}c(t)e^{-t}dt=+\infty$. Thus we have $G(t)\equiv+\infty$.
	
	When $G(t_2)\in(0,+\infty)$ for some $t_2\in[T,+\infty)$, Lemma \ref{G equal to 0} shows that $f\not\in H^0(Z_0,(\mathcal{O}(K_M)\otimes\mathcal{F})|_{Z_0})$. Combining the above discussion, we know $\int_{T_1}^{+\infty}c(t)e^{-t}dt<+\infty$. Using Theorem \ref{maintheorem}, we obtain that $G(\hat{h}^{-1}(r))$ is concave with respect to  $r\in (0,\int_{T}^{+\infty}c(t)e^{-t}dt)$, where $\hat{h}(t)=\int_{t}^{+\infty}c(l)e^{-l}dl$.
	
	Thus, Remark \ref{infty2} holds.
	
	\subsection{A proof of Corollary \ref{linear case of G}}
	\
	
	As $G(\hat{h}^{-1}(r))$ is linear with respect to $r$, then $G(t)=\frac{G(T)}{\int_{T}^{+\infty}c(s)e^{-s}ds}\int_{t}^{+\infty}c(s)e^{-s}ds$ for any $t\in[T,+\infty)$. We firstly prove the existence and uniqueness of $F$.
	
	Following the notations in Lemma \ref{derivatives of G}, as $G(t)=\frac{G(T_1)}{\int_{T}^{+\infty}c(s)e^{-s}ds}\int_t^{+\infty}c(s)e^{-s}ds\in(0,+\infty)$ for any  $t\in(T,+\infty)$,  by choosing $t_1\in(T,+\infty)$ and $t_0>t_1$, we know that the inequality  \eqref{eq:211106c} must be equality, which implies that
	\begin{equation}
		\label{eq:20210412c}
		\int_{\{\psi<-t_0\}}|F_1-F_{t_0}|^2e^{-\varphi}(e^{-\psi-t_0}c(t_0)-c(-\psi))=0,
	\end{equation}
	where $F_1$ is a holomorphic $(n,0)$ form on $\{\psi<-t_1\}$ such that $(F_1-f)\in H^{0}(Z_0,(\mathcal{O}(K_{M})\otimes\mathcal{F})|_{Z_0})$  and $F_{t_0}$ is a holomorphic $(n,0)$ form on $\{\psi<-t_0\}$ such that $(F_{t_0}-f)\in H^{0}(Z_0,(\mathcal{O}(K_{M})\otimes\mathcal{F})|_{Z_0})$.
	As $\int_{T_1}^{+\infty}c(t)e^{-t}<+\infty$ and $c(t)e^{-t}$ is decreasing, then there exists $t_2>t_0$ such that $c(t)e^{-t}<c(t_0)e^{-t_0}-\delta$ for any $t\geq t_2$, where $\delta$ is a positive constant. Then equality \eqref{eq:20210412c} implies that
	\begin{displaymath}
		\begin{split}
			&\delta\int_{\{\psi<-t_2\}}|F_1-F_{t_0}|^2e^{-\varphi}e^{-\psi}\\
			\le&\int_{\{\psi<-t_2\}}|F_1-F_{t_0}|^2e^{-\varphi}(e^{-\psi-t_0}c(t_0)-c(-\psi))\\
			\le&\int_{\{\psi<-t_0\}}|F_1-F_{t_0}|^2e^{-\varphi}(e^{-\psi-t_0}c(t_0)-c(-\psi))\\
			=&0
		\end{split}
	\end{displaymath}
	It follows from $\varphi+\psi$ is plurisubharmonic function and $F_1$ and $F_{t_0}$ are holomorphic $(n,0)$ forms that $F_1=F_{t_0}$ on $\{\psi<-t_0\}$. As $\int_{\{\psi<-t_0\}}|F_{t_0}|^2e^{-\varphi}c(-\psi)=G(t_0)$ and inequality  \eqref{eq:211106c} becomes equality, we have
	$$\int_{\{\psi<-t_1\}}|F_1|^2e^{-\varphi}c(-\psi)=G(t_1).$$
	Following from Lemma \ref{existence of F},  there exists a unique holomorphic $(n,0)$ form $F_t$ on $\{\psi<-t\}$ satisfying $(F_t-f)\in H^{0}(Z_0,(\mathcal{O}(K_{M})\otimes\mathcal{F})|_{Z_0})$ and
	$\int_{\{\psi<-t\}}|F_t|^{2}e^{-\varphi}c(-\psi)=G(t)$ for any $t>T$. By discussion in the above, we know $F_{t}=F_{t'}$ on $\{\psi<-\max{\{t,t'\}}\}$ for any $t\in(T,+\infty)$ and $t'\in(T,+\infty)$. Hence combining $\lim_{t\rightarrow T+0}G(t)=G(T)$, we obtain that there  exists a unique holomorphic $(n,0)$ form $F$ on $M$ satisfying $(F-f)\in H^{0}(Z_0,(\mathcal{O}(K_{M})\otimes\mathcal{F})|_{Z_0})$ and
	$\int_{\{\psi<-t\}}|F|^{2}e^{-\varphi}c(-\psi)=G(t)$ for any $t\geq T$.

	Secondly, we prove equality \eqref{other a also linear}. As $a(t)$ is nonnegative measurable function on $(T,+\infty)$, then there exists a sequence of functions $\{\sum\limits_{j=1}^{n_i}a_{ij}\mathbb{I}_{E_{ij}}\}_{i\in\mathbb{N}^+}$ $(n_i<+\infty$ for any $i\in\mathbb{N}^+)$ satisfying $\sum\limits_{j=1}^{n_i}a_{ij}\mathbb{I}_{E_{ij}}$ is increasing with respect to $i$ and $\lim\limits_{i\to +\infty}\sum\limits_{j=1}^{n_i}a_{ij}\mathbb{I}_{E_{ij}}=a(t)$ for any $t\in(T,+\infty)$, where $E_{ij}$ is a Lebesgue measurable subset of $(T,+\infty)$ and $a_{ij}\ge 0$ is a constant.  It follows from Levi's Theorem that it suffices to prove the case that $a(t)=\mathbb{I}_{E}(t)$, where $E\subset\subset (T,+\infty)$ is a Lebesgue measurable set.
	
	Note that $G(t)=\int_{\{\psi<-t\}}|F|^2e^{-\varphi}c(-\psi)=\frac{G(T_1)}{\int_{T_1}^{+\infty}c(s)e^{-s}ds}
	\int_{t}^{+\infty}c(s)e^{-s}ds$, then
	\begin{equation}\label{linear 3.4}
		\int_{\{-t_1\le\psi<-t_2\}}|F|^2e^{-\varphi}c(-\psi)=\frac{G(T_1)}{\int_{T_1}^{+\infty}c(s)e^{-s}ds}
		\int_{t_2}^{t_1}c(s)e^{-s}ds
	\end{equation}
	holds for any $T\le t_2<t_1<+\infty$. It follows from the dominated convergence theorem and inequality \eqref{linear 3.4} that
	\begin{equation}\label{linear 3.5}
		\int_{\{z\in M:-\psi(z)\in N\}}|F|^2e^{-\varphi}=0
	\end{equation}
	holds for any $N\subset\subset (T,+\infty)$ such that $\mu(N)=0$, where $\mu$ is the Lebesgue measure on $\mathbb{R}$.
	
	As $c(t)e^{-t}$ is decreasing on $(T,+\infty)$, there are at most countable points denoted by $\{s_j\}_{j\in \mathbb{N}^+}$ such that $c(t)$ is not continuous at $s_j$. Then there is a decreasing sequence open set $\{U_k\}$, such that $\{s_j\}_{j\in \mathbb{N}^+}$, such that
	$\{s_j\}_{j\in \mathbb{N}^+}\subset U_k\subset (T,+\infty)$ for any $j$, and $\lim\limits_{k \to +\infty}\mu(U_k)=0$. Choosing any closed interval $[t'_2,t'_1]\subset (T,+\infty)$. Then we have
	\begin{equation}\label{linear 3.6}
		\begin{split}
			&\int_{\{-t'_1\le\psi<-t'_2\}}|F|^2e^{-\varphi}\\
			=&\int_{\{z\in M:-\psi(z)\in(t'_2,t'_1]\backslash U_k\}}|F|^2e^{-\varphi}+
			\int_{\{z\in M:-\psi(z)\in[t'_2,t'_1]\cap U_k\}}|F|^2e^{-\varphi}\\
			=&\lim_{n\to+\infty}\sum_{i=0}^{n-1}\int_{\{z\in M:-\psi(z)\in I_{i,n}\backslash U_k\}}|F|^2e^{-\varphi}+
			\int_{\{z\in M:-\psi(z)\in[t'_2,t'_1]\cap U_k\}}|F|^2e^{-\varphi},
		\end{split}
	\end{equation}
	where $I_{i,n}=(t'_1-(i+1)\alpha_n,t'_1-i\alpha_n]$ and $\alpha_n=\frac{t'_1-t'_2}{n}$. Note that
	\begin{equation}\label{linear 3.7}
		\begin{split}
			&\lim_{n\to+\infty}\sum_{i=0}^{n-1}\int_{\{z\in M:-\psi(z)\in I_{i,n}\backslash U_k\}}|F|^2e^{-\varphi}\\
			\le&\limsup_{n\to+\infty}\sum_{i=0}^{n-1}\frac{1}{\inf_{I_{i,n}\backslash U_k}c(t)}\int_{\{z\in M:-\psi(z)\in I_{i,n}\backslash U_k\}}|F|^2e^{-\varphi}c(-\psi).
		\end{split}
	\end{equation}
	It follows from equality \eqref{linear 3.4} that inequality \eqref{linear 3.7} becomes
	\begin{equation}\label{linear 3.8}
		\begin{split}
			&\lim_{n\to+\infty}\sum_{i=0}^{n-1}\int_{\{z\in M:-\psi(z)\in I_{i,n}\backslash U_k\}}|F|^2e^{-\varphi}\\
			\le&\frac{G(T_1)}{\int_{T_1}^{+\infty}c(s)e^{-s}ds}
			\limsup_{n\to+\infty}\sum_{i=0}^{n-1}\frac{1}{\inf_{I_{i,n}\backslash U_k}c(t)}\int_{I_{i,n}\backslash U_k}c(s)e^{-s}ds.
		\end{split}
	\end{equation}
	It is clear that $c(t)$ is uniformly continuous and has positive lower bound and upper bound on $[t'_2,t'_1]\backslash U_k$. Then we have
	\begin{equation}\label{linear 3.9}
		\begin{split}
			&\limsup_{n\to+\infty}\sum_{i=0}^{n-1}\frac{1}{\inf_{I_{i,n}\backslash U_k}c(t)}\int_{I_{i,n}\backslash U_k}c(s)e^{-s}ds \\
			\le&\limsup_{n\to+\infty}\sum_{i=0}^{n-1}\frac{\sup_{I_{i,n}\backslash U_k}c(t)}{\inf_{I_{i,n}\backslash U_k}c(t)}\int_{I_{i,n}\backslash U_k}e^{-s}ds\\
			=&\int_{(t'_2,t'_1]\backslash U_k}e^{-s}ds.
		\end{split}
	\end{equation}
	Combining inequality \eqref{linear 3.6}, \eqref{linear 3.8} and \eqref{linear 3.9}, we have
	\begin{equation}\label{linear 3.10}
		\begin{split}
			&\int_{\{-t'_1\le\psi<-t'_2\}}|F|^2e^{-\varphi}\\
			=&\int_{\{z\in M:-\psi(z)\in(t'_2,t'_1]\backslash U_k\}}|F|^2e^{-\varphi}+
			\int_{\{z\in M:-\psi(z)\in[t'_2,t'_1]\cap U_k\}}|F|^2e^{-\varphi}\\
			\le&\frac{G(T_1)}{\int_{T_1}^{+\infty}c(s)e^{-s}ds}\int_{(t'_2,t'_1]\backslash U_k}e^{-s}ds+
			\int_{\{z\in M:-\psi(z)\in[t'_2,t'_1]\cap U_k\}}|F|^2e^{-\varphi}.
		\end{split}
	\end{equation}
	Let $k\to +\infty$, following inequality \eqref{linear 3.5} and inequality \eqref{linear 3.10}, we obtain that
	\begin{equation}\label{linear 3.11}
		\begin{split}
			\int_{\{-t'_1\le\psi<-t'_2\}}|F|^2e^{-\varphi}
			\le\frac{G(T_1)}{\int_{T_1}^{+\infty}c(s)e^{-s}ds}\int_{t'_2}^{t'_1}e^{-s}ds.
		\end{split}
	\end{equation}
	Following from a similar discussion we can obtain that
	\begin{equation}\nonumber
		\begin{split}
			\int_{\{-t'_1\le\psi<-t'_2\}}|F|^2e^{-\varphi}
			\ge\frac{G(T_1)}{\int_{T_1}^{+\infty}c(s)e^{-s}ds}\int_{t'_2}^{t'_1}e^{-s}ds.
		\end{split}
	\end{equation}
	then combining inequality \eqref{linear 3.11}, we know
	\begin{equation}\label{linear 3.12}
		\begin{split}
			\int_{\{-t'_1\le\psi<-t'_2\}}|F|^2e^{-\varphi}
			=\frac{G(T_1)}{\int_{T_1}^{+\infty}c(s)e^{-s}ds}\int_{t'_2}^{t'_1}e^{-s}ds.
		\end{split}
	\end{equation}
	Then it is clear that for any open set $U\subset (T,+\infty)$ and compact set $V\subset (T,+\infty)$
	$$
	\int_{\{z\in M;-\psi(z)\in U\}}|F|^2e^{-\varphi}
	=\frac{G(T_1)}{\int_{T_1}^{+\infty}c(s)e^{-s}ds}\int_{U}e^{-s}ds.
	$$
	and
	$$
	\int_{\{z\in M;-\psi(z)\in V\}}|F|^2e^{-\varphi}
	=\frac{G(T_1)}{\int_{T_1}^{+\infty}c(s)e^{-s}ds}\int_{V}e^{-s}ds.
	$$
	As $E\subset\subset (T,+\infty)$. then $E\cap(t_2,t_1]$ is a Lebesgue measurable subset of $(T+\frac{1}{n},n)$ for some large $n$, where $T\le t_2<t_1\le+\infty$. Then there exists a sequence of compact sets $\{V_j\}$ and a sequence of open subsets $\{V'_j\}$ satisfying $V_1\subset \ldots \subset V_j\subset V_{j+1}\subset\ldots \subset E\cap(t_2,t_1]\subset \ldots \subset V'_{j+1}\subset V'_j\subset \ldots\subset V'_1\subset\subset (T,+\infty)$ and $\lim\limits_{j\to +\infty}\mu(V'_j-V_j)=0$, where $\mu$ is the Lebesgue measure on $\mathbb{R}$. Then we have
	\begin{equation}\nonumber
		\begin{split}
			\int_{\{-t'_1\le\psi<-t'_2\}}|F|^2e^{-\varphi}\mathbb{I}_E(-\psi)
			=&\int_{z\in M:-\psi(z)\in E\cap (t_2,t_1]}|F|^2e^{-\varphi}\\
			\le&\liminf_{j\to+\infty}\int_{\{z\in M:-\psi(z)\in V'_j\}}|F|^2e^{-\varphi}\\
			\le&\liminf_{j\to+\infty}\frac{G(T_1)}{\int_{T_1}^{+\infty}c(s)e^{-s}ds}\int_{V'_j}e^{-s}ds\\
			\le&\frac{G(T_1)}{\int_{T_1}^{+\infty}c(s)e^{-s}ds}\int_{E\cap(t_2,t_1]}e^{-s}ds\\
			=&\frac{G(T_1)}{\int_{T_1}^{+\infty}c(s)e^{-s}ds}\int_{t_2}^{t_1}e^{-s}\mathbb{I}_E(s)ds,
		\end{split}
	\end{equation}
	and
	\begin{equation}\nonumber
		\begin{split}
			\int_{\{-t'_1\le\psi<-t'_2\}}|F|^2e^{-\varphi}\mathbb{I}_E(-\psi)
			\ge&\liminf_{j\to+\infty}\int_{\{z\in M:-\psi(z)\in V_j\}}|F|^2e^{-\varphi}\\
			\ge&\liminf_{j\to+\infty}\frac{G(T_1)}{\int_{T_1}^{+\infty}c(s)e^{-s}ds}\int_{V_j}e^{-s}ds\\
			=&\frac{G(T_1)}{\int_{T_1}^{+\infty}c(s)e^{-s}ds}\int_{t_2}^{t_1}e^{-s}\mathbb{I}_E(s)ds,
		\end{split}
	\end{equation}
	which implies that
	$$\int_{\{-t'_1\le\psi<-t'_2\}}|F|^2e^{-\varphi}\mathbb{I}_E(-\psi)=
	\frac{G(T_1)}{\int_{T_1}^{+\infty}c(s)e^{-s}ds}\int_{t_2}^{t_1}e^{-s}\mathbb{I}_E(s)ds.$$
	Hence we obtain that the equality \eqref{other a also linear} holds.
	
	\subsection{A proof of Remark \ref{rem:linear}}
	
	By the definition of $G(t;\tilde{c})$, we have $G(t_0;\tilde{c})\le\int_{\{\psi<-t_0\}}|F|^2e^{-\varphi}\tilde{c}(-\psi)$, then we only consider the case $G(t_0;\tilde{c})<+\infty$.
	
	By the definition of $G(t;\tilde{c})$, we can choose a holomorphic $(n,0)$ form $F_{t_0,\tilde{c}}$ on $\{\psi<-t_0\}$ satisfying $(F_{t_0,\tilde{c}}-f)\in H^0(Z_0 ,(\mathcal{O} (K_X) \otimes \mathcal{F})|_{Z_0}$ and $\int_{ \{ \psi<-t_0\}}|F_{t_0,\tilde{c}}|^2e^{-\varphi}\tilde{c}(-\psi)<+\infty.$ As $\mathcal{H}^2(\tilde{c},t_0)\subset \mathcal{H}^2(c,t_0)$, we have $\int_{ \{ \psi<-t_0\}}|F_{t_0,\tilde{c}}|^2e^{-\varphi}c(-\psi)<+\infty.$ By using
	Lemma \ref{existence of F}, we obtain that
	\begin{equation}\nonumber
		\begin{split}
			\int_{ \{ \psi<-t\}}|F_{t_0,\tilde{c}}|^2e^{-\varphi}c(-\psi)
			=&\int_{ \{ \psi<-t\}}|F|^2e^{-\varphi}c(-\psi)\\
			+&\int_{ \{ \psi<-t\}}|F_{t_0,\tilde{c}}-F|^2e^{-\varphi}c(-\psi)
		\end{split}
	\end{equation}
	for any $t\ge t_0,$ then
	\begin{equation}\label{linear 3.13}
		\begin{split}
			\int_{ \{-t_3\le \psi<-t_4\}}|F_{t_0,\tilde{c}}|^2e^{-\varphi}c(-\psi)
			=&\int_{ \{-t_3\le \psi<-t_4\}}|F|^2e^{-\varphi}c(-\psi)\\
			+&\int_{\{-t_3\le \psi<-t_4\}}|F_{t_0,\tilde{c}}-F|^2e^{-\varphi}c(-\psi)
		\end{split}
	\end{equation}
	holds for any $t_3>t_4\ge t_0$. It follows from the dominant convergence theorem, equality \eqref{linear 3.13}, \eqref{linear 3.5} and $c(t)>0$ for any $t>T$, that
	\begin{equation}\label{linear 3.14}
		\begin{split}
			\int_{ \{z\in M:-\psi(z)=t\}}|F_{t_0,\tilde{c}}|^2e^{-\varphi}
			=\int_{\{z\in M:-\psi(z)=t\}}|F_{t_0,\tilde{c}}-F|^2e^{-\varphi}
		\end{split}
	\end{equation}
	holds for any $t>t_0$.
	
	Choosing any closed interval $[t'_4,t'_3]\subset (t_0,+\infty)\subset (T,+\infty)$. Note that $c(t)$ is uniformly continuous and have positive lower bound and upper bound on $[t'_4,t'_3]\backslash U_k$, where $\{U_k\}$ is the decreasing sequence of open subsets of $(T,+\infty)$, such that $c$ is continuous on $(T,+\infty)\backslash U_k$ and $\lim\limits_{k \to +\infty}\mu(U_k)=0$. Take $N=\cap_{k=1}^{+\infty}U_k.$ Note that
	\begin{equation}\label{linear 3.15}
		\begin{split}
			&\int_{ \{-t'_3\le\psi<-t'_4\}}|F_{t_0,\tilde{c}}|^2e^{-\varphi}\\
			=&\lim_{n\to+\infty}\sum_{i=0}^{n-1}\int_{\{z\in M:-\psi(z)\in S_{i,n}\backslash U_k\}}|F_{t_0,\tilde{c}}|^2e^{-\varphi}
			+\int_{\{z\in M:-\psi(z)\in(t'_4,t'_3]\cap U_k\}}|F_{t_0,\tilde{c}}|^2e^{-\varphi}\\
			\le&\limsup_{n\to+\infty}\sum_{i=0}^{n-1}\frac{1}{\inf_{S_{i,n}}c(t)}\int_{\{z\in M:-\psi(z)\in S_{i,n}\backslash U_k\}}|F_{t_0,\tilde{c}}|^2e^{-\varphi}c(-\psi)\\
			&+\int_{\{z\in M:-\psi(z)\in(t'_4,t'_3]\cap U_k\}}|F_{t_0,\tilde{c}}|^2e^{-\varphi},
		\end{split}
	\end{equation}
	where $S_{i,n}=(t'_4-(i+1)\alpha_n,t'_3-i\alpha_n]$ and $\alpha_n=\frac{t'_3-t'_4}{n}$.
	It follows from equality \eqref{linear 3.13}, \eqref{linear 3.14}, \eqref{linear 3.5} and the dominated theorem that
	\begin{equation}\label{linear 3.16}
		\begin{split}
			&\int_{\{z\in M:-\Psi(z)\in S_{i,n}\backslash U_k\}}|F_{t_0,\tilde{c}}|^2e^{-\varphi_\alpha}c(-\Psi)\\
			=&\int_{\{z\in M:-\Psi(z)\in S_{i,n}\backslash U_k\}}|F|^2e^{-\varphi_\alpha}c(-\Psi)
			+\int_{\{z\in M:-\Psi(z)\in S_{i,n}\backslash U_k\}}|F_{t_0,\tilde{c}}-F|^2e^{-\varphi_\alpha}c(-\Psi).
		\end{split}
	\end{equation}
	As $c(t)$ is uniformly continuous and has positive lower bound and upper bound on $[t'_3,t'_4]\backslash U_k$, combing equality \eqref{linear 3.16}, we have
	\begin{equation}\label{linear 3.17}
		\begin{split}
			&\limsup_{n\to+\infty}\sum_{i=0}^{n-1}\frac{1}{\inf_{S_{i,n}\backslash U_k}c(t)}\int_{\{z\in M:-\Psi(z)\in S_{i,n}\backslash U_k\}}|F_{t_0,\tilde{c}}|^2e^{-\varphi_\alpha}c(-\Psi)\\
			=&\limsup_{n\to+\infty}\sum_{i=0}^{n-1}\frac{1}{\inf_{S_{i,n}\backslash U_k}c(t)}\big(\int_{\{z\in M:-\Psi(z)\in S_{i,n}\backslash U_k\}}|F|^2e^{-\varphi_\alpha}c(-\Psi)\\
			&+\int_{\{z\in M:-\Psi(z)\in S_{i,n}\backslash U_k\}}|F_{t_0,\tilde{c}}-F|^2e^{-\varphi_\alpha}c(-\Psi)\big)\\
			\le & \limsup_{n\to+\infty}\sum_{i=0}^{n-1}\frac{\sup_{S_{i,n}\backslash U_k}c(t)}{\inf_{S_{i,n}\backslash U_k}c(t)}(\int_{\{z\in M:-\Psi(z)\in S_{i,n}\backslash U_k\}}|F|^2e^{-\varphi_\alpha}\\
			&+\int_{\{z\in M:-\Psi(z)\in S_{i,n}\backslash U_k\}}|F_{t_0,\tilde{c}}-F|^2e^{-\varphi_\alpha})\\
			=&\int_{\{z\in M:-\Psi(z)\in (t'_4,t'_3]\backslash U_k\}}|F|^2e^{-\varphi}
			+\int_{\{z\in M:-\Psi(z)\in (t'_4,t'_3]\backslash U_k\}}|F_{t_0,\tilde{c}}-F|^2e^{-\varphi}.
		\end{split}
	\end{equation}
	If follows from inequality \eqref{linear 3.15} and \eqref{linear 3.17} that
	\begin{equation}\label{linear 3.18}
		\begin{split}
			&\int_{ \{-t'_3\le\psi<-t'_4\}}|F_{t_0,\tilde{c}}|^2e^{-\varphi}\\
			\le & \int_{\{z\in M:-\psi(z)\in (t'_4,t'_3]\backslash U_k\}}|F|^2e^{-\varphi}
			+\int_{\{z\in M:-\psi(z)\in (t'_4,t'_3]\backslash U_k\}}|F_{t_0,\tilde{c}}-F|^2e^{-\varphi}\\
			&+\int_{ \{z\in M: -\psi(z)\in(t'_4,t'_3]\cap U_k\}}|F_{t_0,\tilde{c}}|^2e^{-\varphi}.
		\end{split}
	\end{equation}
	It follows from $F_{t_0,\tilde{c}}\in\mathcal{H}^2(c,t_0)$ that $\int_{ \{-t'_3\le\psi<-t'_4\}}|F_{t_0,\tilde{c}}|^2e^{-\varphi}<+\infty$. Let $k\to+\infty,$ by equality \eqref{linear 3.5}, inequality \eqref{linear 3.18} and the dominated theorem, we have
	\begin{equation}\label{linear 3.19}
		\begin{split}
			&\int_{ \{-t'_3\le\psi<-t'_4\}}|F_{t_0,\tilde{c}}|^2e^{-\varphi}\\
			\le & \int_{\{z\in M:-\psi(z)\in (t'_4,t'_3]\}}|F|^2e^{-\varphi}
			+\int_{\{z\in M:-\psi(z)\in (t'_4,t'_3]\backslash N\}}|F_{t_0,\tilde{c}}-F|^2e^{-\varphi}\\
			&+\int_{ \{z\in M: -\psi(z)\in(t'_4,t'_3]\cap N\}}|F_{t_0,\tilde{c}}|^2e^{-\varphi}.
		\end{split}
	\end{equation}
	By similar discussions, we also have that
	\begin{equation}\nonumber
		\begin{split}
			&\int_{ \{-t'_3\le\psi<-t'_4\}}|F_{t_0,\tilde{c}}|^2e^{-\varphi}\\
			\ge & \int_{\{z\in M:-\psi(z)\in (t'_4,t'_3]\}}|F|^2e^{-\varphi}
			+\int_{\{z\in M:-\psi(z)\in (t'_4,t'_3]\backslash N\}}|F_{t_0,\tilde{c}}-F|^2e^{-\varphi}\\
			&+\int_{ \{z\in M: -\psi(z)\in(t'_4,t'_3]\cap N\}}|F_{t_0,\tilde{c}}|^2e^{-\varphi}.
		\end{split}
	\end{equation}
	Combining with inequality \eqref{linear 3.19}, we have
	\begin{equation}\label{linear 3.20}
		\begin{split}
			&\int_{ \{-t'_3\le\psi<-t'_4\}}|F_{t_0,\tilde{c}}|^2e^{-\varphi}\\
			= & \int_{\{z\in M:-\psi(z)\in (t'_4,t'_3]\}}|F|^2e^{-\varphi}
			+\int_{\{z\in M:-\psi(z)\in (t'_4,t'_3]\backslash N\}}|F_{t_0,\tilde{c}}-F|^2e^{-\varphi}\\
			&+\int_{ \{z\in M: -\psi(z)\in(t'_4,t'_3]\cap N\}}|F_{t_0,\tilde{c}}|^2e^{-\varphi}.
		\end{split}
	\end{equation}
	Using equality \eqref{linear 3.5},\eqref{linear 3.14} and Levi's Theorem, we have
	\begin{equation}\label{linear 3.21}
		\begin{split}
			&\int_{ \{z\in M:-\psi(z)\in U\}}|F_{t_0,\tilde{c}}|^2e^{-\varphi}\\
			= & \int_{\{z\in M:-\psi(z)\in U\}}|F|^2e^{-\varphi}
			+\int_{\{z\in M:-\psi(z)\in U\backslash N\}}|F_{t_0,\tilde{c}}-F|^2e^{-\varphi}\\
			&+\int_{ \{z\in M: -\psi(z)\in U\cap N\}}|F_{t_0,\tilde{c}}|^2e^{-\varphi}
		\end{split}
	\end{equation}
	holds for any open set $U\subset\subset (t_0,+\infty)$, and
	\begin{equation}\label{linear 3.22}
		\begin{split}
			&\int_{ \{z\in M:-\psi(z)\in V\}}|F_{t_0,\tilde{c}}|^2e^{-\varphi}\\
			= & \int_{\{z\in M:-\psi(z)\in V\}}|F|^2e^{-\varphi}
			+\int_{\{z\in M:-\psi(z)\in V\backslash N\}}|F_{t_0,\tilde{c}}-F|^2e^{-\varphi}\\
			&+\int_{ \{z\in M: -\psi(z)\in V\cap N\}}|F_{t_0,\tilde{c}}|^2e^{-\varphi}
		\end{split}
	\end{equation}
	holds for any compact set $V\subset (t_0,+\infty)$. For any measurable set $E\subset\subset (t_0,+\infty)$, there exists a sequence of compact set $\{V_l\}$, such that $V_l\subset V_{l+1}\subset E$ for any $l$ and $\lim\limits_{l\to +\infty}\mu(V_l)=\mu(E)$, hence by equality \eqref{linear 3.22}, we have
	\begin{equation}\label{linear 3.23}
		\begin{split}
			\int_{ \{\psi<-t_0\}}|F_{t_0,\tilde{c}}|^2e^{-\varphi}\mathbb{I}_E(-\psi)
			\ge&\lim_{l \to +\infty} \int_{ \{\psi<-t_0\}}|F_{t_0,\tilde{c}}|^2e^{-\varphi}\mathbb{I}_{V_j}(-\psi)\\
			\ge&\lim_{l \to +\infty} \int_{ \{\psi<-t_0\}}|F|^2e^{-\varphi}\mathbb{I}_{V_j}(-\psi)\\
			=& \int_{ \{\psi<-t_0\}}|F|^2e^{-\varphi}\mathbb{I}_{V_j}(-\psi).
		\end{split}
	\end{equation}
	It is clear that for any $t>t_0$, there exists a sequence of functions $\{\sum_{j=1}^{n_i}\mathbb{I}_{E_{i,j}}\}_{i=1}^{+\infty}$ defined on $(t,+\infty)$, satisfying $E_{i,j}\subset\subset (t,+\infty)$, $\sum_{j=1}^{n_{i+1}}\mathbb{I}_{E_{i+1,j}}(s)\ge \sum_{j=1}^{n_{i}}\mathbb{I}_{E_{i,j}}(s)$ and $\lim\limits_{i\to+\infty}\sum_{j=1}^{n_i}\mathbb{I}_{E_{i,j}}(s)=\tilde{c}(s)$ for any $s>t$. Combing Levi's Theorem and inequality \eqref{linear 3.23}, we have
	\begin{equation}\label{linear 3.24}
		\begin{split}
			\int_{ \{\psi<-t_0\}}|F_{t_0,\tilde{c}}|^2e^{-\varphi}\tilde{c}(-\psi)
			\ge\int_{ \{\psi<-t_0\}}|F|^2e^{-\varphi}\tilde{c}(-\psi).
		\end{split}
	\end{equation}
	By the definition of $G(t_0,\tilde{c})$, we have $G(t_0,\tilde{c})=\int_{ \{\psi<-t_0\}}|F|^2e^{-\varphi}\tilde{c}(-\psi).$ Equality \eqref{other c also linear} is proved.

	\section{Proof of Theorem \ref{thm:1d-extension}}\label{sec:proof-1d}
	
	Let $\tilde\varphi=\varphi+a\psi$, $\tilde c(t)=c(\frac{t}{1-a})e^{-\frac{at}{1-a}}$ and $\tilde\psi=(1-a)\psi$ for some $a\in(-\infty,1)$. It is clear that $e^{-\tilde\varphi}\tilde c(-\tilde\psi)=e^{-\varphi}c(-\psi)$, $(1-a)\int_{0}^{+\infty}c(l)e^{-l}dl=\int_{0}^{+\infty}\tilde c(l)e^{-l}dl$. Then we can assume that $\frac{1}{2}v(dd^c\psi,z_0)=k+1$ without loss of generality.

	\subsection{Optimal jets $L^2$ extension}\label{sec:3.1}
	\
	
	In this section, we prove the existence  of holomorphic $(1,0)$ form $F$  satisfying $(F-f,z_0)\in I^{k+1}\otimes \mathcal{O}(K_{\Omega})_{z_0}$ and  inequality \eqref{eq:210902a}.
	
	It follows from Lemma \ref{l:green} that $\psi(z)\leq2(k+1)G_{\Omega}(z,z_0)$ for any $z\in\Omega$. As $c(t)e^{-t}$ is decreasing on $(0,+\infty)$, then we have $e^{-\varphi}c(-\psi)\leq e^{-(\varphi+\psi-2(k+1)G_{\Omega}(\cdot,z_0))}c(-2(k+1)G_{\Omega}(\cdot,z_0))$. Thus we can assume that $\psi=2(k+1)G_{\Omega}(\cdot,z_0)$.
	
	The following remark  shows that it suffices to prove the existence  of holomorphic $(1,0)$ form $F$  satisfying $(F-f,z_0)\in I^{k+1}\otimes \mathcal{O}(K_{\Omega})_{z_0}$ and  inequality \eqref{eq:210902a} for the case $c(t)$ has positive lower bound and upper bound on $(t',+\infty)$ for any $t'>0$.
	
	\begin{Remark}
		Let $c_j$ be a positive measurable function on $(0,+\infty)$, such that $c_{j}(t)=c(t)$ when $t<j$, $c_j(t)=\min\{c(j),\frac{1}{j}\}$ when $t\geq j$. It is clear that $c_j$ is decreasing on $(0,+\infty)$ and $\int_{0}^{+\infty}c_j(t)e^{-t}<+\infty$. As
		$$\lim_{j\rightarrow+\infty}\int_{j}^{+\infty}c_j(t)e^{-t}=0,$$
		we have
		$$\lim_{j\rightarrow+\infty}\int_{0}^{+\infty}c_j(t)e^{-t}=\int_{0}^{+\infty}c(t)e^{-t}.$$
		
		Then there exists a holomorphic $(1,0)$ form $F_j$ on $\Omega$ such that $(F_j-f,z_0)\in I^{k+1}\otimes \mathcal{O}(K_{\Omega})_{z_0}$ and
		$$\int_{\Omega}|F_j|^2e^{-\varphi}c_j(-\psi)\leq(\int_0^{+\infty}c_j(t)e^{-t}dt)\frac{2\pi e^{-\alpha}}{(k+1)(c_{\beta}(z_0))^{2(k+1)}}.$$
		
		Note that $\psi=2(k+1)G_{\Omega}(\cdot,z_0)$ is smooth on $\Omega\backslash\{z_0\}$. For any compact subset $K$ of $\Omega\backslash\{z_0\}$, there exists $s_K>0$ such that $\int_{K}e^{-s_K\psi}dV_{\Omega}<+\infty$, where $dV_{\Omega}$ is a continuous volume form on $\Omega$.  Then  we have
		$$\int_{K}(\frac{e^{\varphi}}{c_j(-\psi)})^{s_K}dV_{\Omega}=\int_{K}(\frac{e^{\varphi+\psi}}{c_j(-\psi)})^{s_K}e^{-s_K\psi}dV_{\Omega}\leq C\int_{K}e^{-s_K\psi}dV_{\Omega}<+\infty,$$
		where $C$ is a constant independent of $j$.
		It follows from Lemma \ref{l:converge} ($g_j=e^{-\varphi}c_j(-\psi)$) that there exists a subsequence of $\{F_j\}$, denoted still by $\{F_j\}$, which is uniformly convergent to a holomorphic $(1,0)$ form $F$ on any compact subset of $\Omega$ and
		\begin{displaymath}
			\begin{split}
				\int_{\Omega}|F|^2e^{-\varphi}c(-\psi)&\leq\lim_{j\rightarrow+\infty}(\int_0^{+\infty}c_j(t)e^{-t}dt)\frac{2\pi e^{-\alpha}}{(k+1)(c_{\beta}(z_0))^{2(k+1)}}\\
				&=(\int_0^{+\infty}c(t)e^{-t}dt)\frac{2\pi e^{-\alpha}}{(k+1)(c_{\beta}(z_0))^{2(k+1)}}.	\end{split}
		\end{displaymath}
		Since $\{F_j\}$ is uniformly convergent to   $F$ on any compact subset of $\Omega$ and $(F_j-f,z_0)\in I^{k+1}\otimes \mathcal{O}(K_{\Omega})_{z_0}$ for any $j$, we have $(F-f,z_0)\in I^{k+1}\otimes \mathcal{O}(K_{\Omega})_{z_0}$.
	\end{Remark}

	As $\frac{1}{2}v(dd^c(\varphi+\psi),z_0)=k+1$, $\varphi+\psi$ is subharmonic on $\Omega$  and $\psi=2(k+1)G_{\Omega}(\cdot,z_0)$, it follows from Siu's Decomposition Theorem  that  $\varphi$ is subharmonic on $\Omega$.
	As $\Omega$ is a Stein manifold, then there exist smooth subharmonic functions $\Phi_l$ on $\Omega$, which are decreasingly convergent to $\varphi$.
	We can find a sequence of open Riemann surfaces $\{D_m\}_{m=1}^{+\infty}$ satisfying $z_0\in D_m\subset\subset D_{m+1}$ for any $m$ and $\cup_{m=1}^{+\infty}D_m=\Omega$, and there is a holomorphic $(1,0)$ form $\tilde F$ on $\Omega$ such that $(\tilde F-f,z_0)\in I^{k+1}\otimes\mathcal{O}(K_{\Omega})_{z_0}$.
	
	Note that $\int_{D_m}|\tilde F|^2<+\infty$ for any $m$ and
	$$\int_{D_m}\mathbb{I}_{\{-t_0-1<\psi<-t_0\}}|\tilde{F}|^2e^{-\Phi_l-\psi}\leq e^{t_0+1}\int_{D_m}|\tilde{F}|^2e^{-\Phi_l}<+\infty$$
	for any $m$, $l$ and $t_0>T$.
	Using Lemma \ref{lemma2.1} ($\varphi\sim\Phi_l+\psi$), for any $D_m$, $l\in\mathbb{N}^+$ and $t_0>T$, there exists a holomorphic $(1,0)$ form $F_{l,m,t_0}$ on $D_m$, such that
	\begin{equation}
		\label{eq:210903a}
		\begin{split}
			&\int_{D_m}|F_{l,m,t_0}-(1-b_{t_0,1}(\psi))\tilde{F}|^{2}e^{-\Phi_l-\psi+v_{t_0,1}(\psi)}c(-v_{t_0,1}(\psi))\\
			\leq& (\int_{0}^{t_{0}+1}c(t)e^{-t}dt) \int_{D_m}\mathbb{I}_{\{-t_0-1<\psi<-t_0\}}|\tilde{F}|^2e^{-\Phi_l-\psi}
		\end{split}
	\end{equation}
	where
	$b_{t_0,1}(t)=\int_{-\infty}^{t}\mathbb{I}_{\{-t_{0}-1< s<-t_{0}\}}ds$,
	$v_{t_0,1}(t)=\int_{0}^{t}b_{t_0,1}(s)ds$. Note that $\psi(z)=2(k+1)G_{\Omega}(z,z_0)$, and $b_{t_0,1}(t)=0$ when $-t$ is large enough, then $(F_{l,m,t_0}-\tilde{F},z_0)\in I^{k+1}\otimes\mathcal{O}(K_{\Omega})_{z_0}$, and therefore $(F_{l,m,t_0}-f,z_0)\in I^{k+1}\otimes\mathcal{O}(K_{\Omega})_{z_0}$.

	Note that $v_{t_0,1}(\psi)\geq\psi$ and $c(t)e^{-t}$ is decreasing, then the inequality \eqref{eq:210903a} becomes
	\begin{equation}
		\label{eq:210903b}
		\begin{split}
			&\int_{D_m}|F_{l,m,t_0}-(1-b_{t_0,1}(\psi))\tilde{F}|^{2}e^{-\Phi_l}c(-\psi)\\
			\leq& (\int_{0}^{t_{0}+1}c(t)e^{-t}dt) \int_{D_m}\mathbb{I}_{\{-t_0-1<\psi<-t_0\}}|\tilde{F}|^2e^{-\Phi_l-\psi}.
		\end{split}
	\end{equation}
	Using Lemma \ref{l:G-compact}, there exists a neighborhood $V'_{z_0}$  of $z_0$ with local coordinate $\tilde{w}$, such that $\tilde{w}(z_0)=0$, $G_{\Omega}(\cdot,z_0)|_{V'_{z_0}}=\log|\tilde{w}|$ and $V'_{z_0}=\{z\in\Omega:G_{\Omega}(z,z_0)<-t_0\}\subset\subset D_m$. By direct calculation,  we have $\lim_{z\rightarrow z_0}|\frac{\tilde{F}}{\tilde{w}^{k}d\tilde{w}}|=c_{\beta}^{-k-1}(z_0)$ and
	\begin{equation}
		\label{eq:210903c}
		\begin{split}
			&\limsup_{t_0\rightarrow+\infty}\int_{D_m}\mathbb{I}_{\{-t_0-1<\psi<-t_0\}}|\tilde{F}|^2e^{-\Phi_l-\psi}\\
			=&\limsup_{t_0\rightarrow+\infty}\int_{D_m}\mathbb{I}_{\{-t_0-1<2(k+1)\log|\tilde{w}|<-t_0\}}|\frac{\tilde{F}}{\tilde{w}^{k}d\tilde{w}}|^2 |\tilde{w}|^{2k}e^{-\Phi_l-2(k+1)G_{\Omega}(\cdot,z_0)}d\tilde{w}d\overline{\tilde{w}}\\
			=&\limsup_{t_0\rightarrow+\infty}\int_{D_m}\mathbb{I}_{\{-t_0-1<2(k+1)\log|\tilde{w}|<-t_0\}}|\frac{\tilde{F}}{\tilde{w}^{k}d\tilde{w}}|^2e^{-\Phi_l} |\tilde{w}|^{-2}d\tilde{w}d\overline{\tilde{w}}\\
			=&4\pi c_{\beta}^{-2k-2}(z_0)e^{-\Phi_l(z_0)}\limsup_{t_0\rightarrow+\infty}\int_{\{-t_0-1<2(k+1)\log|s|<-t_0\}}s^{-1}ds\\
			=&\frac{2\pi e^{-\Phi_l(z_0)}}{(k+1)(c_{\beta}(z_0))^{2(k+1)}}\\
			<&+\infty,	
		\end{split}
	\end{equation}
	where the forth equality holds for $\lim_{z\rightarrow z_0}|\frac{\tilde{F}}{\tilde{w}^{k}d\tilde{w}}|^2e^{-\Phi_l}=c_{\beta}^{-2k-2}(z_0)e^{-\Phi_l(z_0)}.$
	Combining inequality \eqref{eq:210903b} and \eqref{eq:210903c}, letting $t_0\rightarrow+\infty$, we have
	\begin{equation}
		\label{eq:210903d}
		\begin{split}
			&\limsup_{t_0\rightarrow+\infty}\int_{D_m}|F_{l,m,t_0}-(1-b_{t_0,1}(\psi))\tilde{F}|^{2}e^{-\Phi_l}c(-\psi)\\
			\leq& \limsup_{t_0\rightarrow+\infty}(\int_{0}^{t_{0}+1}c(t)e^{-t}dt) \int_{D_m}\mathbb{I}_{\{-t_0-1<\psi<-t_0\}}|\tilde{F}|^2e^{-\Phi_l-\psi}\\
			\leq&(\int_{0}^{+\infty}c(t)e^{-t}dt) \frac{2\pi e^{-\Phi_l(z_0)}}{(k+1)(c_{\beta}(z_0))^{2(k+1)}}.
		\end{split}
	\end{equation}
	
	Note that
	$$\limsup_{t_0\rightarrow+\infty}\int_{D_m}|(1-b_{t_0,1}(\psi))\tilde{F}|^2e^{-\Phi_l}c(-\psi)<+\infty,$$
	then we have
	$$\limsup_{t_0\rightarrow+\infty}\int_{D_m}|F_{l,m,t_0}|^2e^{-\Phi_l}c(-\psi)<+\infty.$$
	Using Lemma \ref{l:converge},
	we obtain that
	there exists a subsequence of $\{F_{l,m,t_0}\}_{t_0\rightarrow+\infty}$ (also denoted by $\{F_{l,m,t_0}\}_{t_0\rightarrow+\infty}$)
	compactly convergent to a holomorphic (1,0) form on $D_m$ denoted by $F_{l,m}$. Then it follows from inequality \eqref{eq:210903d} and Fatou's Lemma that
	\begin{displaymath}
		\begin{split}
			\int_{D_m}|F_{l,m}|^{2}e^{-\Phi_l}c(-\psi)
			=&\int_{D_m}\liminf_{t_0\rightarrow+\infty}|F_{l,m,t_0}-(1-b_{t_0,1}(\psi))\tilde{F}|^{2}e^{-\Phi_l}c(-\psi)\\
			\leq&\liminf_{t_0\rightarrow+\infty}\int_{D_m}|F_{l,m,t_0}-(1-b_{t_0,1}(\psi))\tilde{F}|^{2}e^{-\Phi_l}c(-\psi)\\
			\leq&(\int_{0}^{+\infty}c(t)e^{-t}dt) \frac{2\pi e^{-\Phi_l(z_0)}}{(k+1)(c_{\beta}(z_0))^{2(k+1)}}.
		\end{split}	
	\end{displaymath}
	Note that $\lim_{l\rightarrow+\infty}\Phi_l(z_0)=\varphi(z_0)=\alpha$, then we have
	\begin{equation}
		\label{eq:210903e}\limsup_{l\rightarrow+\infty}\int_{D_m}|F_{l,m}|^{2}e^{-\Phi_l}c(-\psi)\leq(\int_{0}^{+\infty}c(t)e^{-t}dt)\frac{2\pi e^{-\alpha}}{(k+1)(c_{\beta}(z_0))^{2(k+1)}}<+\infty.
	\end{equation}
	Using Lemma \ref{l:converge} ($g_l=e^{-\Phi_l}c(-\psi)$),
	we obtain that
	there exists a subsequence of $\{F_{l,m}\}_{l\rightarrow+\infty}$ (also denoted by $\{F_{l,m}\}_{l\rightarrow+\infty}$)
	compactly convergent to a holomorphic (1,0) form on $D_m$ denoted by $F_{m}$ and
	\begin{equation}
		\label{eq:210903f}
		\int_{D_m}|F_{m}|^{2}e^{-\varphi}c(-\psi)
		\leq(\int_{0}^{+\infty}c(t)e^{-t}dt) \frac{2\pi e^{-\alpha}}{(k+1)(c_{\beta}(z_0))^{2(k+1)}}.
	\end{equation}
	Inequality \eqref{eq:210903f} implies that
	$$\int_{D_m}|F_{m'}|^{2}e^{-\varphi}c(-\psi)\leq (\int_{0}^{+\infty}c(t)e^{-t}dt) \frac{2\pi e^{-\alpha}}{(k+1)(c_{\beta}(z_0))^{2(k+1)}}$$
	holds for any $m'\geq m$.
	Using Lemma \ref{l:converge}, diagonal method and Levi's Theorem, we obtain a subsequence of $\{F_m\}$, denoted also by $\{F_m\}$, which is uniformly convergent to a holomorphic $(1,0)$ form $F$ on $\Omega$ satisfying that $(F-f,z_0)\in I^{k+1}\otimes\mathcal{O}(K_{\Omega})_{z_0}$ and
	$$\int_{\Omega}|F|^{2}e^{-\varphi}c(-\psi)\leq (\int_{0}^{+\infty}c(t)e^{-t}dt)\frac{2\pi e^{-\alpha}}{(k+1)(c_{\beta}(z_0))^{2(k+1)}}.$$
	Thus the existence  of holomorphic $(1,0)$ form $F$  satisfying $(F-f,z_0)\in I^{k+1}\otimes \mathcal{O}(K_{\Omega})_{z_0}$ and  inequality \eqref{eq:210902a} holds.
	
	\subsection{Characterization for the equality}\label{sec:3.2}
	\
	
	In this section, we prove the  characterization for the equality \\ $(\int_0^{+\infty}c(t)e^{-t}dt)\frac{2\pi e^{-\alpha}}{(k+1)(c_{\beta}(z_0))^{2(k+1)}}=\inf\{\int_{\Omega}|\tilde{F}|^2e^{-\varphi}c(-\psi):\tilde{F}$ is a holomorphic $(1,0)$ for such that $(\tilde{F}-f,z_0)\in I^{k+1}\otimes \mathcal{O}(K_{\Omega})_{z_0}$$\}$.

	By the  discussion in Section \ref{sec:3.1} ($\psi\sim\psi+t$, $c(\cdot)\sim c(\cdot+t)$ and $\Omega\sim\{\psi<-t\}$), for any $t>0$, there exists a holomorphic $(1,0)$ form $F_t$ on $\{\psi<-t\}$ such that $(F_t-f,z_0)\in I^{k+1}\otimes \mathcal{O}(K_{\Omega})_{z_0}$ and
	$$\int_{\{\psi<-t\}}|F_t|^2e^{-\varphi}c(-\psi)\leq(\int_t^{+\infty}c(l)e^{-l}dl)\frac{2\pi e^{-\alpha}}{(k+1)(c_{\beta}(z_0))^{2(k+1)}}.$$
	
	Firstly, we prove the necessity. By the discussion in Section \ref{sec:3.1}, we know there exists a holomorphic $(1,0)$ form $F\not\equiv0$ on $\Omega$ such that $(F-f,z_0)\in I^{k+1}\otimes \mathcal{O}(K_{\Omega})_{z_0}$ and
	$$\int_{\Omega}|F|^2e^{-(\varphi+\psi-2(k+1)G_{\Omega}(\cdot,z_0))}c(-2(k+1)G_{\Omega}(\cdot,z_0))\leq(\int_0^{+\infty}c(t)e^{-t}dt)\frac{2\pi e^{-\alpha}}{(k+1)(c_{\beta}(z_0))^{2(k+1)}}.$$
	As $c(t)e^{-t}$ is decreasing, $\psi\leq2(k+1)G_{\Omega}(\cdot,z_0)$ and  $(\int_0^{+\infty}c(t)e^{-t}dt)\frac{2\pi e^{-\alpha}}{(k+1)(c_{\beta}(z_0))^{2(k+1)}}=\inf\{\int_{\Omega}|\tilde{F}|^2e^{-\varphi}c(-\psi):\tilde{F}$ is a holomorphic $(1,0)$ for such that $(\tilde{F}-f,z_0)\in I^{k+1}\otimes \mathcal{O}(K_{\Omega})_{z_0}$$\}$, then we have
	$$\int_{\Omega}|F|^2e^{-\varphi}c(-\psi)=\int_{\Omega}|F|^2e^{-(\varphi+\psi-2(k+1)G_{\Omega}(\cdot,z_0))}c(-2(k+1)G_{\Omega}(\cdot,z_0)).$$
	As $\frac{1}{2}v(dd^c\psi,z_0)=k+1$, $c(t)e^{-t}$ is decreasing and $\int_0^{+\infty}c(t)e^{-t}dt<+\infty$, it follows from Lemma \ref{l:psi<G} that  $\psi=2(k+1)G_{\Omega}(\cdot,z_0)$. Thus $\varphi$ is subharmonic on $\Omega.$
	
	Note that $e^{-\varphi}c(-2(k+1)G_{\Omega}(\cdot,z_0))$ has locally positive lower bound on $\Omega\backslash{z_0}$. Taking $\mathcal{F}|_{Z_0}=\mathcal{I}(2(k+1)G_{\Omega}(\cdot,z_0))_{z_0}$, by the definition of $G(t)$, then we obtain that inequality
	\begin{equation}
		\label{eq:210809m}
		\frac{G(t)}{\int_t^{+\infty}c(l)e^{-l}dl}\leq	\frac{2\pi e^{-\alpha}}{(k+1)(c_{\beta}(z_0))^{2(k+1)}}=\frac{G(0)}{\int_0^{+\infty}c(t)e^{-t}dt}\end{equation}
	holds for any $t\geq0$.
	Theorem \ref{maintheorem} tells us  $G(\hat{h}^{-1}(r))$ is linear with respect to $r$. Then using Theorem \ref{thm:1-d-linear} shows that  $\varphi+\psi=2\log|\tilde{g}|+2G_{\Omega}(\cdot,z_0)+2\tilde{u}$, where $\tilde g$ is a holomorphic function on $\Omega$ with $ord_{z_0}(g)=k$ and $\tilde u$ is a harmonic function on $\Omega$ with $\chi_{-\tilde{u}}=\chi_{z_0}$. Then there exists a holomorphic function $f_1$ on $\Omega$ such that $|f_1|=e^{\tilde{u}+G_{\Omega}(\cdot,z_0)}$. Note that $\frac{\tilde{g}}{f_1^k}$ is a holomorphic function on $\Omega$ and denote it by $g$. It's clear that $g(z_0)\not=0$. Then
	\begin{displaymath}
		\begin{split}
			\varphi+\psi=&2\log|\tilde{g}|+2G_{\Omega}(\cdot,z_0)+2\tilde{u}\\
			=&2\log|g|+2k\log|f_1|+2G_{\Omega}(\cdot,z_0)+2\tilde{u}\\
			=&2\log|g|+2(k+1)G_{\Omega}(\cdot,z_0)+2(k+1)\tilde{u}
		\end{split}
	\end{displaymath}
	Denote that $u=(k+1)\tilde{u}$. Then the three statements in Theorem \ref{thm:1d-extension} hold.
	
	Then, we prove the sufficiency.
	If the three statements in Theorem \ref{thm:1d-extension} hold, following from Lemma \ref{l:chi}, there exists a holomorphic function $f_2$ on $\Omega$ such that $|f_2|=e^{u+kG_{\Omega}(\cdot,z_0)+u_{z_0}}$, where $u_{z_0}$ is the harmonic function on $\Omega$ with $\chi_{u_{z_0}}=\chi_{z_0}$. Then we have
	\begin{displaymath}
		\begin{split}
			\varphi+\psi=&2\log|g|+2(k+1)G_{\Omega}(\cdot,z_0)+2u\\
			=&2\log|g|+2G_{\Omega}(\cdot,z_0)+(2kG_{\Omega}(\cdot,z_0)+2u_{z_0}+2u)-2u_{z_0}\\
			=&2\log|gf_2|+2G_{\Omega}(\cdot,z_0)-2u_{z_0}.
		\end{split}
	\end{displaymath}
	Note that $ord_{z_0}(gf_2)=k$, taking $\mathcal{F}|_{Z_0}=\mathcal{I}(2(k+1)G_{\Omega}(z,z_0))_{z_0}$, then Theorem \ref{thm:1-d-linear} shows that $G(\hat{h}^{-1}(r))$ is linear with respect to $r$. It follows from Lemma \ref{linear case of G} that there exists a holomorphic  $(1,0)$ form $\tilde{F}$ such that $(\tilde{F}-f,z_0)\in I^{k+1}\otimes\mathcal{O}(K_{\Omega})_{z_0}$ and $\int_{\{\psi<-t\}}|\tilde{F}|^2e^{-\varphi}c(\psi)=G(t)$. Using Lemma \ref{l:G-compact}, there exists a neighborhood $V'_{z_0}$  of $z_0$ with local coordinate $\tilde{w}$, such that $\tilde{w}(z_0)=0$, $G_{\Omega}(\cdot,z_0)|_{V'_{z_0}}=\log|\tilde{w}|$ and $V'_{z_0}=\{z\in\Omega:G_{\Omega}(z,z_0)<-t_0\}\subset\subset\Omega$. By direct calculation, we have $\lim_{z\rightarrow z_0}|\frac{\tilde{F}}{\tilde{w}^{k}d\tilde{w}}|=c_{\beta}^{-k-1}(z_0)$ and
	\begin{displaymath}
		\begin{split}
			&\lim_{t\rightarrow+\infty}\frac{\int_{\{2(k+1)G_{\Omega}(\cdot,z_0)<-t\}}|\tilde{F}|^2e^{-\varphi}c(-\psi)}{\int_{t}^{+\infty}c(l)e^{-l}dl}\\
			=&\lim_{t\rightarrow+\infty}\frac{\int_{\{2(k+1)\log|\tilde{w}|<-t\}}|\tilde{F}|^2e^{-2\log|g|-2u}c(-2(k+1)\log|\tilde{w}|)}{\int_{t}^{+\infty}c(l)e^{-l}dl}\\
			=&\lim_{t\rightarrow+\infty}\frac{\int_{\{2(k+1)\log|\tilde{w}|<-t\}}|\frac{\tilde{F}}{\tilde{w}^{k}d\tilde{w}}|^2e^{-2\log|g|-2u}|\tilde{w}|^{2k}c(-2(k+1)\log|\tilde{w}|)d\tilde{w}d\overline{\tilde{w}}}{\int_{t}^{+\infty}c(l)e^{-l}dl}\\
			=&\frac{e^{-2\log|g(z_0)|-2u(z_0)}}{c_{\beta}^{2(k+1)}(z_0)}\lim_{t\rightarrow+\infty}\frac{\int_{\{2(k+1)\log|\tilde{w}|<-t\}}|\tilde{w}|^{2k}c(-2(k+1)\log|\tilde{w}|)d\tilde{w}d\overline{\tilde{w}}}{\int_{t}^{+\infty}c(l)e^{-l}dl}\\
			=&\frac{2\pi e^{-\alpha}}{(k+1)(c_{\beta}(z_0))^{2(k+1)}},
		\end{split}
	\end{displaymath}
	where the third equality holds for $\lim_{z\rightarrow z_0}|\frac{\tilde{F}}{\tilde{w}^{k}d\tilde{w}}|^2e^{-2\log|g|-2u}=\frac{e^{-2\log|g(z_0)|-2u(z_0)}}{c_{\beta}^{2(k+1)}(z_0)}$.
	Thus the equality $(\int_0^{+\infty}c(t)e^{-t}dt)\frac{2\pi e^{-\alpha}}{(k+1)(c_{\beta}(z_0))^{2(k+1)}}=G(0)=\inf\{\int_{\Omega}|\tilde{F}|^2e^{-\varphi}c(-\psi):\tilde{F}$ is a holomorphic $(1,0)$ for such that $(\tilde{F}-f,z_0)\in I^{k+1}\otimes \mathcal{O}(K_{\Omega})_{z_0}$$\}$ holds.
	
	\subsection{Another proof of the sufficiency---proof of Remark \ref{r:min}}\label{sec:3.3}
	\
	
	In this section, we  prove the sufficiency in another way, which gives the proof of Remark \ref{r:min}.
	
	As $g(z_0)\not=0$, without loss of generality, we can assume that $g=1$ (when $g\not=1$, we can replace all jets extension $\tilde{F}$ with $\frac{1}{g(z_0)}\tilde{F}g$), thus $\varphi=2u$ and $\psi=2(k+1)G_{\Omega}(\cdot,z_0)$ such that $\chi_{-u}=(\chi_{z_0})^{k+1}$. Let $u_{z_0}$ be the harmonic function in Lemma \ref{l:chi} such that $\chi_{u_{z_0}}=\chi_{z_0}$. It follows from the solution of equality part of Suita conjecture and Lemma \ref{existence of F}, there exists a unique holomorphic $(1,0)$ form $\tilde{F}_0$ on $\Omega$, satisfying that $\tilde{F}_0(z_0)=dw$ and $\int_{\Omega}|\tilde{F}_0|^2e^{2u_{z_0}}=\inf\{\int_{\Omega}|\tilde{F}|^2e^{2u_{z_0}}:\tilde{F}(z_0)=dw\,\&\,\tilde{F}\in H^0(\Omega,\mathcal{O}(K_{\Omega}))\}=2\pi\frac{e^{2u_{z_0}(z_0)}}{c_{\beta}^2(z_0)}$.

	By the  discussion in Section \ref{sec:3.1} ($\psi\sim\psi+t$, $c(\cdot)\sim c(\cdot+t)$, $k=0$ and $\Omega\sim\{\psi<-t\}$), for any $t>0$, there exists a holomorphic $(1,0)$ form $F_t$ on $\{\psi<-t\}$ such that $F_t(z_0)=dw$ and
	$$\int_{\{2G_{\Omega}(\cdot,z_0)<-t\}}|F_t|^2e^{2u_{z_0}}\leq e^{-t}2\pi\frac{e^{2u_{z_0}(z_0)}}{c_{\beta}^2(z_0)}.$$
	Then we have
	$$\frac{\tilde{G}(t)}{e^{-t}}\leq	2\pi\frac{e^{2u_{z_0}(z_0)}}{c_{\beta}^2(z_0)}=\tilde{G}(0)$$
	holds for any $t\geq0$, where $\tilde{G}(t)=\inf\{\int_{\{2G_{\Omega}(\cdot,z_0)<-t\}}|\tilde{F}|^2e^{2u_{z_0}}:\tilde{F}(z_0)=dw\,\&\,\tilde{F}\in H^0(\Omega,\mathcal{O}(K_{\Omega}))\}$.
	Theorem \ref{maintheorem} tells us   $\tilde{G}(-\log r)$ is linear with respect to $r$.
	
	Let $p:\Delta\rightarrow\Omega$ be the universal covering of $\Omega$. Let $V'_{z_0}$ be a neighborhood of $z_0$ with local coordinate $\tilde{w}$, such that $\tilde{w}(z_0)=0$, $G_{\Omega}(\cdot,z_0)|_{V'_{z_0}}=\log|\tilde{w}|$ and $V'_{z_0}=\{z\in\Omega:G_{\Omega}(z,z_0)<-t_0\}$. We can assume that $p$ restricted on any component of $p^{-1}(V'_{z_0})$ is biholomorphic. Let $p_j:=p|_{U_j},$ where $U_j$ is a component of $p^{-1}(V_{z_0})$ for any fixed $j.$ Following from Lemma \ref{l:G-compact}, we can choose $0<r_3<r_2<r_1<+\infty$ such that $\{z\in\Omega:2G_{\Omega}(z,z_0)<-r_3+1\}\subset\subset V'_{z_0}$. Choosing $d_1(t)$ and $d_2(t)$ be  two positive continuous function on $(0,+\infty)$ as in Lemma \ref{l:d}, it follows from Lemma \ref{linear case of G} that
	\begin{displaymath}
		\begin{split}
			\int_{\Omega}|\tilde{F}_0|^2e^{2u_{z_0}}d_1(-2G_{\Omega}(\cdot,z_0))=&(\int_{0}^{+\infty}d_1(t)e^{-t}dt)2\pi\frac{e^{2u_{z_0}(z_0)}}{c_{\beta}^2(z_0)}\\
			=&(\int_{0}^{+\infty}d_2(t)e^{-t}dt)2\pi\frac{e^{2u_{z_0}(z_0)}}{c_{\beta}^2(z_0)}\\
			=&\int_{\Omega}|\tilde{F}_0|^2e^{2u_{z_0}}d_2(-2G_{\Omega}(\cdot,z_0)).
		\end{split}
	\end{displaymath}
	Using Lemma \ref{l:d}, we have $((p_j)_{\star}(f_{u_{z_0,j}}))\tilde{F}_0|_{V_{z_0}}=c_0d\tilde{w}$, where $((p_j)_{\star}(f_{u_{z_0,j}}))\tilde{F}_0|_{z_0}=c_0d\tilde{w}$.
	As $G_{\Omega}(\cdot,z_0)|_{V'_{z_0}}=\log|\tilde{w}|$, there exists a holomorphic function $f_{z_0}$ on $\Delta$ such that $f_{z_0}|_{U_j}=p^{\star}\tilde{w}$. Then we have
	$f_{u_{z_0}}p^{\star}\tilde{F}_0|_{U_j}=c_0df_{z_0}$, which implies that
	$$\tilde{F}_0=c_0p_{\star}(f_{-u_{z_0}}df_{z_0}),$$
	where $f_{-u_{z_0}}=\frac{1}{f_{u_{z_0}}}$ and $|f_{-u_{z_0}}|=p^{\star}e^{-u_{z_0}}$. As $\tilde{F}_0(z_0)=dw$, then we have $|c_0|=e^{u_{z_0}(z_0)}c_{\beta}^{-1}(z_0)$.

	Denote that $\tilde c(t):=c((k+1)t)e^{-kt}$.
	For any holomorphic $(1,0)$ form  $\tilde{F}$ on $\Omega$ satisfying $\int_{\Omega}|\tilde{F}|^2e^{2u_{z_0}}\tilde{c}(-2G_{\Omega}(\cdot,z_0))<+\infty$, there exists $t_0>0$ such that $\{2G_{\Omega}(\cdot,z_0)<-t_0\}\subset\subset\Omega$ and $\tilde{c}(t)\geq\tilde{c}(t_0)e^{t-t_0}\geq\tilde{c}(t_0)e^{-t_0}>0$ for any $t\leq t_0$, thus
	\begin{displaymath}
		\begin{split}
			\int_{\Omega}|\tilde{F}|^2e^{2u_{z_0}}=&\int_{\{2G_{\Omega}(\cdot,z_0)<-t_0\}}|\tilde{F}|^2e^{2u_{z_0}}+\int_{\{2G_{\Omega}(\cdot,z_0)\geq -t_0\}}|\tilde{F}|^2e^{2u_{z_0}}\\
			\leq&\int_{\{2G_{\Omega}(\cdot,z_0)<-t_0\}}|\tilde{F}|^2e^{2u_{z_0}}+\frac{e^{t_0}}{\tilde{c}(t_0)}\int_{\{2G_{\Omega}(\cdot,z_0)\geq -t_0\}}|\tilde{F}|^2e^{2u_{z_0}}\tilde{c}(-2G_{\Omega}(\cdot,z_0))\\
			<&+\infty.
		\end{split}
	\end{displaymath} Using Lemma \ref{linear case of G} and Remark \ref{rem:linear}, we have that $\int_{\Omega}|\tilde{F}_0|^2e^{2u_{z_0}}\tilde{c}(-2G_{\Omega}(\cdot,z_0))=\inf\{\int_{\Omega}|\tilde{F}|^2e^{2u_{z_0}}\tilde{c}(-2G_{\Omega}(\cdot,z_0)):\tilde{F}(z_0)=dw\,\&\,\tilde{F}\in H^0(\Omega,\mathcal{O}(K_{\Omega}))\}=2\pi\frac{e^{2u_{z_0}(z_0)}}{(k+1)c_{\beta}^2(z_0)}$.
	Then for any holomorphic $(1,0)$ form $L$ on $\Omega$ satisfying $(L,z_0)\in I\otimes\mathcal{O}(K_{\Omega})_{z_0}$, we have
	\begin{equation}
		\label{eq:210906a}
		\int_{\Omega}\tilde{F}_0\bar{L}e^{2u_{z_0}}\tilde{c}(-2G_{\Omega}(\cdot,z_0))=0.
	\end{equation}
	Note that
	\begin{displaymath}
		\begin{split}
			e^{2u_{z_0}}\tilde{c}(-2G_{\Omega}(\cdot,z_0))=&e^{2u_{z_0}+2k2G_{\Omega}(\cdot,z_0)}c(-2(k+1)G_{\Omega}(\cdot,z_0))\\
			=&|f_{u}f_{u_{z_0}}f^k_{z_0}|^{2}e^{-2u}c(-2(k+1)G_{\Omega}(\cdot,z_0)).
		\end{split}
	\end{displaymath}
	As $\chi_{u}\chi_{u_{z_0}}\chi^k_{z_0}=\chi_{u}(\chi_{z_0})^{k+1}=1$, then $p_{\star}(f_{u}f_{u_{z_0}}f^k_{z_0})$ is a holomorphic function on $\Omega$. For any holomorphic $(1,0)$ form $\tilde{L}$ on $\Omega$ satisfying $(\tilde{L},z_0)\in I^{k+1}\otimes\mathcal{O}(K_{\Omega})_{z_0}$, there exists $L$ such that $\tilde{L}=p_{\star}(f_{u}f_{u_{z_0}}f^k_{z_0})L$, thus equality \eqref{eq:210906a} becomes
	\begin{equation}
		\label{eq:210906b}\int_{\Omega}p_{\star}(f_{u}f_{u_{z_0}}f^k_{z_0})\tilde{F}_0\overline{\tilde{L}}e^{-2u}c(-2(k+1)G_{\Omega}(\cdot,z_0))=0.
	\end{equation}
	Note that $p_{\star}(f_{u}f_{u_{z_0}}f^k_{z_0})\tilde{F}_0=c_0p_{\star}(f_{u}f^k_{z_0}df_{z_0})$, then equality \eqref{eq:210906b}
	implies that $\int_{\Omega}|c_{k}p_{\star}(f_{u}f^k_{z_0}df_{z_0})|^2e^{-2u}c(-2(k+1)G_{\Omega}(\cdot,z_0))=\inf\{\int_{\Omega}|\tilde{F}|^2e^{-2u}c(-2(k+1)G_{\Omega}(\cdot,z_0)):\tilde{F}(z_0)=dw\,\&\,(\tilde{F}-f,z_0)\in I^{k+1}\otimes\mathcal{O}(K_{\Omega})\}$, where $c_k$ is the constant such that $(c_{k}p_{\star}(f_{u}f^k_{z_0}df_{z_0})-f,z_0)\in I^{k+1}\otimes\mathcal{O}(K_{\Omega}).$ As $f|_{V_{z_0}}=w^{k}dw$, then we have $|c_k|=e^{-u(z_0)}c_{\beta}^{-k-1}(z_0)$.
	Following from Lemma \ref{linear case of G} and the above discussion, we have
	\begin{equation}
		\label{eq:210906c}
		\begin{split}
			&\int_{\Omega}|c_{k}p_{\star}(f_{u}f^k_{z_0}df_{z_0})|^2e^{-2u}c(-2(k+1)G_{\Omega}(\cdot,z_0))\\
			=&\int_{\Omega}|\frac{c_{k}}{c_0}p_{\star}(f_{u}f_{u_{z_0}}f^k_{z_0})\tilde{F}_0|^2e^{-2u}c(-2(k+1)G_{\Omega}(\cdot,z_0))		\\
			=&\frac{e^{-2u(z_0)}c_{\beta}^{-2k-2}(z_0)}{e^{2u_{z_0}(z_0)}c_{\beta}^{-2}(z_0)}\int_{\Omega}|\tilde{F}_0|^2e^{2u_{z_0}}\tilde{c}(-2G_{\Omega}(\cdot,z_0))\\
			=&\frac{e^{-2u(z_0)}c_{\beta}^{-2k}(z_0)}{e^{2u_{z_0}(z_0)}}2\pi\frac{e^{2u_{z_0}(z_0)}}{(k+1)c_{\beta}^2(z_0)}\\
			=&2\pi\frac{e^{-2u(z_0)}}{(k+1)(c_{\beta}(z_0))^{2(k+1)}}.
	\end{split}\end{equation}
	Thus, the sufficiency holds. Note that when $g\not=1$, the minimal $L^2$  extension for jets is $c_{k}gp_{\star}(f_{u}f^k_{z_0}df_{z_0})$, where $c_k$ is a constant such that $(c_{k}gp_{\star}(f_{u}f^k_{z_0}df_{z_0})-f,z_0)\in I^{k+1}\otimes\mathcal{O}(K_{\Omega})$ and $|c_k|=\frac{1}{g(z_0)e^{-u(z_0)}c_{\beta}^{k+1}(z_0)}=e^{-\frac{\alpha}{2}}c_{\beta}^{-k-1}(z_0)$. Thus, Remark \ref{r:min} holds.

	\section{appendix}
	
	\subsection {Some results used in the proof of Lemma \ref{lemma2.1}}
	\begin{Lemma}(see \cite{Demailly00})
		Let Q be a Hermitian vector bundle on a K\"ahler manifold M of dimension $n$ with a
		K\"ahler metric $\omega$. Assume that $\eta , g >0$ are smooth functions on M. Then
		for every form $v\in D(M,\wedge^{n,q}T^*M \otimes Q)$ with compact support we have
		\begin{equation}
			\begin{split}
				&\int_M (\eta+g^{-1})|D^{''*}v|^2_QdV_M+\int_M \eta|D^{''}v|^2_QdV_M \\
				\ge  &\int_M \langle[\eta \sqrt{-1}\Theta_Q-\sqrt{-1}\partial \bar{\partial}
				\eta-\sqrt{-1}g
				\partial\eta \wedge\bar{\partial}\eta, \Lambda_{\omega}]v,v\rangle_QdV_M.
			\end{split}
		\end{equation}
		\label{lemma21}
	\end{Lemma}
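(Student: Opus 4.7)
The plan is to derive this twisted inequality from the classical (untwisted) Bochner--Kodaira--Nakano identity, which says that for any compactly supported smooth $(n,q)$-form $u$ with values in $Q$ on the Kähler manifold $(M,\omega)$,
\[
\int_M \bigl(|D''u|^2_Q+|D''^* u|^2_Q\bigr)\,dV_M \;\geq\; \int_M \bigl\langle [\sqrt{-1}\Theta_Q,\Lambda_\omega]u,u\bigr\rangle_Q\, dV_M,
\]
valid because on $(n,q)$-forms the $D'D'^*$-term in the BKN formula drops the positive $D'u$ contribution. The basic idea is to test this inequality against the weight $\eta$ and transfer $\eta$ inside the operators, paying the price in cross terms involving $\bar\partial\eta$.

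Concretely, I would compute $\int_M \eta\,\langle(D''D''^{*}+D''^{*}D'')v,v\rangle_Q\,dV_M$ in two ways. On the one hand, by BKN it dominates $\int_M \eta\langle[\sqrt{-1}\Theta_Q,\Lambda_\omega]v,v\rangle = \int_M \langle[\eta\sqrt{-1}\Theta_Q,\Lambda_\omega]v,v\rangle$ (using that $\eta$ is a function, so it pulls through the bracket). On the other hand, moving $\eta$ past $D''$ and $D''^{*}$ via the pointwise commutators $[D'',\eta]=\bar\partial\eta\wedge\cdot$ and its formal adjoint, integration by parts yields
\[
\int_M \eta\bigl(|D''v|^2+|D''^{*}v|^2\bigr)\,dV_M \;+\; 2\,\mathrm{Re}\!\int_M \langle D''^{*}v,(\bar\partial\eta\wedge\cdot)^{*}v\rangle\,dV_M \;+\; \text{(lower-order)},
\]
where the lower-order piece is exactly $-\int_M\langle[\sqrt{-1}\partial\bar\partial\eta,\Lambda_\omega]v,v\rangle$, coming from differentiating the $\bar\partial\eta$ factor; this furnishes the $-\sqrt{-1}\partial\bar\partial\eta$ term inside the curvature bracket.

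The mixed term is controlled by the elementary Cauchy--Schwarz estimate $2|\mathrm{Re}\langle a,b\rangle|\leq g^{-1}|a|^2+g|b|^2$ applied with $a=D''^{*}v$ and $b=(\bar\partial\eta\wedge\cdot)^{*}v$: the $g^{-1}|D''^{*}v|^2$ contribution gets absorbed into the $D''^{*}$-term, upgrading its coefficient from $\eta$ to $\eta+g^{-1}$, while the $g\,|\bar\partial\eta\wedge v|_Q^2$ contribution is rewritten, via the standard Kähler identity for $(n,q)$-forms, as $g\langle[\sqrt{-1}\partial\eta\wedge\bar\partial\eta,\Lambda_\omega]v,v\rangle$; moving this to the right-hand side gives the $-\sqrt{-1}g\,\partial\eta\wedge\bar\partial\eta$ term in the bracket. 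The main obstacle I expect is the last conversion: one must verify that the pointwise norm $|\bar\partial\eta\wedge v|_Q^2$ equals precisely $\langle[\sqrt{-1}\partial\eta\wedge\bar\partial\eta,\Lambda_\omega]v,v\rangle_Q$ on $(n,q)$-forms, which is an algebraic linear-algebra fact on $\wedge^{n,q}T^{*}M$ that only holds because of the maximality of the holomorphic degree. Once this identity is pinned down and signs are tracked through the integration-by-parts step, the inequality of the lemma drops out after collecting the four groups of terms.
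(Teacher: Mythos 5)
The paper gives no proof of this lemma; it is quoted from Demailly's Ohsawa--Takegoshi paper \cite{Demailly00}, and your sketch reproduces Demailly's argument (twist the Bochner--Kodaira--Nakano inequality by $\eta$, integrate by parts to generate the $-\sqrt{-1}\partial\bar{\partial}\eta$ term, absorb the cross term by Cauchy--Schwarz with weight $g$), so the approach is the right one and matches the cited source. One correction to the final step: the algebraic identity you need is not $|\bar{\partial}\eta\wedge v|^2_Q=\langle[\sqrt{-1}\partial\eta\wedge\bar{\partial}\eta,\Lambda_\omega]v,v\rangle_Q$ but rather $|(\bar{\partial}\eta\wedge\cdot)^{*}v|^2_Q=|v\llcorner(\bar{\partial}\eta)^{\sharp}|^2_Q=\langle[\sqrt{-1}\partial\eta\wedge\bar{\partial}\eta,\Lambda_\omega]v,v\rangle_Q$; the wedge and the contraction have different norms (their squares sum to $|\bar{\partial}\eta|^2|v|^2$), and it is the contraction that your Cauchy--Schwarz step actually produces, since $b=(\bar{\partial}\eta\wedge\cdot)^{*}v$. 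That identity is exactly Lemma \ref{lemma22} of the appendix (Lemma 4.2 of \cite{guan-zhou13ap}), so with this substitution your outline closes correctly.
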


	\begin{Lemma}(Lemma 4.2 in \cite{guan-zhou13ap})
		Let M and Q be as in the above lemma and $\theta$ be a continuous (1,0) form on M.
		Then we have
		\begin{equation}
			[\sqrt{-1}\theta \wedge
			\bar{\theta},\Lambda_\omega]\alpha=\bar{\theta}\wedge(\alpha\llcorner(\bar{\theta})^\sharp),
		\end{equation}
		for any (n,1) form $\alpha$ with value in Q. Moreover, for any positive (1,1) form
		$\beta$, we have $[\beta,\Lambda_\omega]$ is semipositive.
		\label{lemma22}
	\end{Lemma}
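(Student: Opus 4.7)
The equation
\[
[\sqrt{-1}\theta\wedge\bar\theta,\Lambda_\omega]\alpha
=\bar\theta\wedge(\alpha\llcorner(\bar\theta)^\sharp)
\]
is an algebraic identity on the fibres of the relevant bundles, so my plan is to reduce everything to linear algebra at a single point $p\in M$ and to verify it in a convenient frame. Choose holomorphic coordinates $(z_1,\dots,z_n)$ near $p$ with $\omega(p)=\sqrt{-1}\sum_{j=1}^n dz_j\wedge d\bar z_j$, write $\theta(p)=\sum_j\theta_j\,dz_j$, and note that $(\bar\theta)^\sharp=\sum_j\bar\theta_j\,\partial/\partial\bar z_j$. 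The first simplification is a bidegree check: $\sqrt{-1}\theta\wedge\bar\theta\wedge\alpha$ has bidegree $(n+1,2)$ and therefore vanishes on an $n$-dimensional manifold, so
\[
[\sqrt{-1}\theta\wedge\bar\theta,\Lambda_\omega]\alpha
=\sqrt{-1}\theta\wedge\bar\theta\wedge\Lambda_\omega\alpha,
\]
and only one of the two terms in the commutator survives.

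I would then expand $\alpha=\sum_k\alpha_{\bar k}\,dZ\wedge d\bar z_k$ with $dZ:=dz_1\wedge\cdots\wedge dz_n$ and coefficients $\alpha_{\bar k}$ in $Q$, compute $\Lambda_\omega\alpha$ from the formula for the adjoint of $L=\omega\wedge$ (this returns a sum over $k$ of the $(n-1,0)$-forms obtained from $dZ$ by deleting the factor $dz_k$, weighted by $\alpha_{\bar k}$ up to a uniform sign), and then wedge with $\sqrt{-1}\sum_{j,l}\theta_j\bar\theta_l\,dz_j\wedge d\bar z_l$. Only the $j=k$ diagonal contributions survive after the $dz_j$-wedge, producing $\sum_{k,l}\theta_k\bar\theta_l\alpha_{\bar k}\,dZ\wedge d\bar z_l$. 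On the right-hand side, the contraction evaluates to $\alpha\llcorner(\bar\theta)^\sharp=\bigl(\sum_k\bar\theta_k\alpha_{\bar k}\bigr)\,dZ$ up to the same uniform sign, and wedging with $\bar\theta=\sum_l\bar\theta_l\,d\bar z_l$ reproduces exactly the same expression. Matching the two sides in the chosen basis gives the identity.

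For the ``moreover'' clause, I would simultaneously diagonalise $\beta$ with respect to $\omega$ at $p$ so that in suitable coordinates $\omega(p)=\sqrt{-1}\sum dz_j\wedge d\bar z_j$ and $\beta(p)=\sqrt{-1}\sum_j\lambda_j\,dz_j\wedge d\bar z_j$ with $\lambda_j\ge 0$. For any $(n,q)$-form $u=\sum_{|J|=q}u_J\,dZ\wedge d\bar z_J$ the same bidegree vanishing $\beta\wedge u=0$ reduces $[\beta,\Lambda_\omega]u$ to $\beta\wedge\Lambda_\omega u$, and a direct basis computation yields
\[
\langle[\beta,\Lambda_\omega]u,u\rangle=\sum_{|J|=q}\Bigl(\sum_{j\in J}\lambda_j\Bigr)|u_J|^2\ge 0,
\]
which is the claimed semi-positivity. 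A slightly more conceptual variant is to decompose $\beta$ pointwise as a non-negative combination of rank-one forms $\sqrt{-1}\theta_j\wedge\bar\theta_j$ coming from the diagonalisation and apply the first part of the lemma term by term before summing.

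The main technical obstacle in both parts is the sign and normalisation bookkeeping intrinsic to computations with the Hodge--Lefschetz triple $(L,\Lambda_\omega,H)$ applied to bundle-valued $(p,q)$-forms: the sign picked up when anti-commuting $d\bar z_k$ past a holomorphic volume factor, and the various factors of $\sqrt{-1}$ in the definition of $\Lambda_\omega$ and of the pointwise Hermitian pairing, must be fixed once at the beginning and propagated consistently. Once a convention for $\Lambda_\omega$ (as the formal adjoint of $\omega\wedge$ with respect to the pointwise Hodge inner product) is chosen, both assertions collapse to the multi-index calculations sketched above and no deeper input is required.
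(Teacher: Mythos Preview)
The paper does not supply its own proof of this lemma; it is simply quoted from \cite{guan-zhou13ap}. Your strategy---reduce to a pointwise computation in an orthonormal frame, use the bidegree observation that $\theta\wedge\bar\theta\wedge\alpha=0$ on $(n,1)$-forms to kill one term of the commutator, and diagonalise $\beta$ for the semipositivity---is exactly the standard route and is correct in outline.

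There is one concrete slip in the bookkeeping, and it is not merely a sign. With your stated convention $(\bar\theta)^\sharp=\sum_j\bar\theta_j\,\partial/\partial\bar z_j$ and $\llcorner$ the ordinary interior product, wedging $\bar\theta$ onto $\alpha\llcorner(\bar\theta)^\sharp=(\pm)\sum_k\bar\theta_k\alpha_{\bar k}\,dZ$ yields $\sum_{k,l}\bar\theta_k\bar\theta_l\alpha_{\bar k}\,dZ\wedge d\bar z_l$, which does \emph{not} agree with your left-hand side $\sum_{k,l}\theta_k\bar\theta_l\alpha_{\bar k}\,dZ\wedge d\bar z_l$ unless $\theta$ happens to be real. (Check $n=1$, $\theta=c\,dz$: the left side is $|c|^2\alpha$ while your right side gives $\bar c^{\,2}\alpha$.) The resolution is that in this context $\alpha\llcorner(\bar\theta)^\sharp$ is, by definition, the Hermitian adjoint $(\bar\theta\wedge)^{*}\alpha$, and that adjoint is conjugate-linear in $\bar\theta$; in coordinates this forces the effective contraction vector to be $\sum_j\theta_j\,\partial/\partial\bar z_j$ rather than $\sum_j\bar\theta_j\,\partial/\partial\bar z_j$. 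Once you make that correction both sides match and the rest of your argument is fine. This is precisely the ``normalisation bookkeeping'' you warned about, but since it is a complex conjugation and not a sign it cannot be absorbed into the ``uniform sign'' you invoke and should be fixed explicitly.
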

	
	\begin{Lemma}
		(Remark 3.2 in \cite{Demailly00})
		Let ($M,\omega$) be a complete K\"ahler manifold equipped with a (non-necessarily
		complete) K\"ahler metric $\omega$, and let Q be a Hermitian vector bundle over M.
		Assume that $\eta$ and g are smooth bounded positive functions on M and let
		$B:=[\eta \sqrt{-1}\Theta_Q-\sqrt{-1}\partial \bar{\partial} \eta-\sqrt{-1}g
		\partial\eta \wedge\bar{\partial}\eta, \Lambda_{\omega}]$.Assume that $\delta \ge
		0$ is a number such that $B+\delta I$ is semi-positive definite everywhere on
		$\wedge^{n,q}T^*M \otimes Q$ for some $q \ge 1$. Then given a form $v \in
		L^2(M,\wedge^{n,q}T^*M \otimes Q)$ such that $D^{''}v=0$ and $\int_M \langle
		(B+\delta I)^{-1}v,v\rangle_Q dV_M < +\infty$, there exists an approximate solution
		$u \in L^2(M,\wedge^{n,q-1}T^*M \otimes Q)$ and a correcting term $h\in
		L^2(M,\wedge^{n,q}T^*M \otimes Q)$ such that $D^{''}u+\sqrt{\delta}h=v$ and
		\begin{equation}
			\int_M(\eta+g^{-1})^{-1}|u|^2_QdV_M+\int_M|h|^2_QdV_M \le \int_M \langle (B+\delta
			I)^{-1}v,v\rangle_Q dV_M.
		\end{equation}
		\label{lemma23}
	\end{Lemma}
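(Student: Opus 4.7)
The plan is to apply Demailly's Hahn-Banach / Riesz approach to the $L^2$ existence problem for $\bar\partial$, with the a priori estimate coming from Lemma \ref{lemma21} and the semi-positivity hypothesis on $B + \delta I$ controlling the dual norm through a pointwise Cauchy-Schwarz inequality in the Hermitian form $B+\delta I$.

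First I would introduce the Hilbert space
\[ H = L^2\bigl(M, \wedge^{n,q-1}T^*M \otimes Q;\, (\eta + g^{-1})^{-1}\, dV_M\bigr) \oplus L^2\bigl(M, \wedge^{n,q}T^*M\otimes Q\bigr) \]
and the bounded linear operator $T : H \to L^2(M, \wedge^{n,q}T^*M\otimes Q)$ defined by $T(u, h) = D''u + \sqrt{\delta}\, h$. Finding $(u, h) \in H$ with $T(u,h) = v$ and $\|(u,h)\|_H^2 \leq C$, where $C := \int_M \langle (B+\delta I)^{-1}v, v\rangle_Q\, dV_M$, is equivalent by Hahn-Banach / Riesz duality to the a priori estimate
\[ |\langle v, w\rangle|^2 \leq C \cdot \Bigl(\int_M (\eta+g^{-1})|D''^* w|^2\, dV_M + \delta\int_M |w|^2\, dV_M\Bigr) \]
for all $w \in \mathrm{Dom}(D''^*)\cap\mathrm{Dom}(D'')$.

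The core of the argument is to first prove the estimate on the subspace $\ker(D'') \cap \mathrm{Dom}(D''^*)$. For such $w$ the $D''w$ term in Lemma \ref{lemma21} drops out, yielding $\int_M\langle Bw, w\rangle_Q\, dV_M \leq \int_M(\eta+g^{-1})|D''^*w|^2\, dV_M$. A pointwise Cauchy-Schwarz in the semi-positive form $B+\delta I$ then gives
\[ |\langle v, w\rangle|^2 \leq C\cdot\int_M\langle (B+\delta I)w, w\rangle_Q\, dV_M \leq C\cdot\Bigl(\int_M(\eta+g^{-1})|D''^*w|^2 + \delta\|w\|^2\Bigr). \]
Since $D''v = 0$, one has $\langle v, w\rangle = \langle v, w_1\rangle$ where $w_1$ is the orthogonal projection of $w$ onto $\ker(D'')$, so applying the estimate to $w_1$ together with $\|w_1\|\leq\|w\|$ and $\|D''^*w_1\|\leq\|D''^*w\|$ delivers the required bound for arbitrary $w$, and the Riesz representation theorem then produces $(u,h)\in H$ with $D''u+\sqrt{\delta}h = v$ and the stated norm control.

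The main obstacle is the technical verification that (i) Lemma \ref{lemma21}, stated for smooth compactly supported forms, extends to forms in $\mathrm{Dom}(D''^*)\cap\mathrm{Dom}(D'')$, and (ii) the orthogonal projection $w\mapsto w_1$ onto $\ker(D'')$ stays in $\mathrm{Dom}(D''^*)$ with $\|D''^*w_1\|\leq\|D''^*w\|$. Both are handled using completeness of the K\"ahler metric $\omega$: completeness produces cutoffs $\chi_\nu\uparrow 1$ with $|d\chi_\nu|_\omega\to 0$, through which one approximates elements of $\mathrm{Dom}(D''^*)\cap\mathrm{Dom}(D'')$ by smooth compactly supported forms (after Friedrichs mollification) on which Lemma \ref{lemma21} applies directly; a density argument together with the semi-positivity of $B+\delta I$ then passes the estimate to the limit, after which the functional-analytic step is routine.
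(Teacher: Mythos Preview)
The paper does not give a proof of this lemma; it is quoted from \cite{Demailly00} and used as a black box in the appendix. Your outline follows the standard H\"ormander--Demailly functional-analytic scheme and is structurally sound: the dual reformulation, the pointwise Cauchy--Schwarz in the form $B+\delta I$, and the projection onto $\ker D''$ (where indeed $D''^*w_1 = D''^*w$ since $(\ker D'')^\perp \subset \ker D''^*$) are all correct.

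There is, however, one genuine gap. You invoke ``completeness of the K\"ahler metric $\omega$'' to pass Lemma~\ref{lemma21} from compactly supported forms to $\mathrm{Dom}(D''^*)\cap\mathrm{Dom}(D'')$, but the hypothesis explicitly allows $\omega$ to be \emph{non-complete}: the phrase ``complete K\"ahler manifold'' means only that $M$ carries \emph{some} complete K\"ahler metric $\omega'$, not that $\omega$ itself is complete. Without completeness of $\omega$ the cutoff-and-mollify density argument fails. The actual content of Demailly's remark is a perturbation step: one replaces $\omega$ by the complete metrics $\omega_\varepsilon = \omega + \varepsilon\omega'$, runs your argument verbatim for each $\omega_\varepsilon$ to obtain $(u_\varepsilon,h_\varepsilon)$, and then extracts a weak limit as $\varepsilon\to 0$. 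The limiting step works because for $(n,q)$-forms the combination $|\alpha|^2_{\omega_\varepsilon}\,dV_{\omega_\varepsilon}$ and the curvature operator $[\,\cdot\,,\Lambda_{\omega_\varepsilon}]$ behave monotonically in $\varepsilon$, so the right-hand side $\int_M\langle(B+\delta I)^{-1}v,v\rangle$ and the weight $(\eta+g^{-1})^{-1}$ give $\varepsilon$-uniform bounds. This perturbation-and-limit argument is precisely the missing piece in your sketch.
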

	
	\begin{Lemma}
		(Theorem 6.1 in \cite{DemaillyReg}, see also Theorem 2.2 in \cite{ZZ2019})
		Let ($M,\omega$) be a complex manifold equipped with a Hermitian metric
		$\omega$, and $\Omega \subset \subset M $ be an open set. Assume that
		$T=\widetilde{T}+\frac{\sqrt{-1}}{\pi}\partial\bar{\partial}\varphi$ is a closed
		(1,1)-current on M, where $\widetilde{T}$ is a smooth real (1,1)-form and
		$\varphi$ is a quasi-plurisubharmonic function. Let $\gamma$ be a continuous real
		(1,1)-form such that $T \ge \gamma$. Suppose that the Chern curvature tensor of
		$TM$ satisfies
		\begin{equation}
			\begin{split}
				(\sqrt{-1}&\Theta_{TM}+\varpi \otimes Id_{TM})(\kappa_1 \otimes \kappa_2,\kappa_1
				\otimes \kappa_2)\ge 0 \\
				&\forall \kappa_1,\kappa_2 \in TM \quad with \quad \langle \kappa_1,\kappa_2
				\rangle=0
			\end{split}
		\end{equation}
		for some continuous nonnegative (1,1)-form $\varpi$ on M. Then there is a family
		of closed (1,1)-current
		$T_{\zeta,\rho}=\widetilde{T}+\frac{\sqrt{-1}}{\pi}\partial\bar{\partial}
		\varphi_{\zeta,\rho}$ on M ($\zeta \in (0,+\infty)$ and $\rho \in (0,\rho_1)$ for
		some positive number $\rho_1$) independent of $\gamma$, such that
		\par
		$(i)\ \varphi_{\zeta,\rho}$ is quasi-plurisubharmonic on a neighborhood of
		$\bar{\Omega}$, smooth on $M\backslash E_{\zeta}(T)$,
		\\
		increasing with respect to
		$\zeta$ and $\rho$ on $\Omega $ and converges to $\varphi$ on $\Omega$ as $\rho
		\to 0$.
		\par
		$(ii)\ T_{\zeta,\rho}\ge\gamma-\delta\varpi-\delta_{\rho}\omega$ on $\Omega$.
		\par
		where $E_{\zeta}(T):=\{x\in M:v(T,x)\ge \zeta\}$ ($\zeta>0$) is the $\zeta$-upper level set of
		Lelong numbers and $\{\delta_{\rho}\}$ is an increasing family of positive numbers
		such that $\lim\limits_{\rho \to 0}\delta_{\rho}=0$.
		\label{lemma24}
	\end{Lemma}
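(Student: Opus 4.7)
My plan is to construct $\varphi_{\zeta,\rho}$ by a two-parameter regularization combining local convolution smoothing (parametrized by $\rho$) with a truncation of Lelong numbers (parametrized by $\zeta$), in the spirit of the classical Demailly regularization procedure on complex manifolds.

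First I would build, for each small $\rho>0$, a smoothing of $\varphi$ using the exponential map $\exp_z\colon T_zM\to M$ associated to the Hermitian metric $\omega$. Concretely, set
\begin{equation}\nonumber
\varphi_\rho(z)=\frac{1}{\tau(\rho,z)}\int_{\zeta\in T_zM}\varphi\bigl(\exp_z(\rho\zeta)\bigr)\chi(|\zeta|_\omega^2)\,d\lambda_\omega(\zeta),
\end{equation}
where $\chi\ge 0$ is a smooth radial cutoff supported in the unit ball and $\tau(\rho,z)$ is a normalization. This makes $\varphi_\rho$ smooth on a neighborhood of $\bar\Omega$ and yields a decreasing approximation of $\varphi$ as $\rho\to 0$ by the usual mollifier/upper semicontinuity argument.

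Second, to control the positivity of $\sqrt{-1}\partial\bar\partial\varphi_\rho$, I would differentiate under the integral and Taylor-expand both the pullback $\exp_z^*\omega$ and the Jacobian of $\exp_z$ to second order in $\rho\zeta$. The leading contribution reproduces a convolution of $\sqrt{-1}\partial\bar\partial\varphi$, hence inherits the lower bound by $\gamma$; the second-order correction is a curvature term whose negative part, on orthogonal pairs, is governed by $\sqrt{-1}\Theta_{TM}$. Precisely here the hypothesis $(\sqrt{-1}\Theta_{TM}+\varpi\otimes\mathrm{Id}_{TM})(\kappa_1\otimes\kappa_2,\kappa_1\otimes\kappa_2)\ge 0$ on orthogonal $\kappa_1,\kappa_2$ converts this error into a bound of the form $-\delta\varpi-\delta_\rho\omega$ with $\delta_\rho\to 0$. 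To obtain smoothness outside the Lelong sublevel set $E_\zeta(T)$ I would incorporate a Kiselman-type attenuation, replacing the naive convolution by a $\zeta$-dependent construction (for example a Legendre transform in $\log\rho$, or a kernel whose effective radius shrinks according to the local Lelong number) that caps the propagation of singularities at level $\zeta$. Monotonicity in $\zeta$ and $\rho$ then follows from Kiselman's minimum principle, and independence of $\gamma$ is built into the construction.

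The hard part will be the curvature bookkeeping in the second step: one must carry out a careful Taylor expansion of $\exp_z^*\omega$ and the volume form $d\lambda_\omega$ to isolate the quadratic form in $\zeta$ that carries the Chern curvature of $TM$, and then exploit the orthogonal-pair hypothesis to obtain the precise estimate $-\delta\varpi-\delta_\rho\omega$ rather than a crude estimate by a constant multiple of $\omega$. The delicate point is that the off-diagonal positivity condition on $\sqrt{-1}\Theta_{TM}+\varpi\otimes\mathrm{Id}_{TM}$ matches exactly the structure arising from mixed second derivatives of $\exp_z^*\omega$ in normal coordinates at $z$; showing that the cross-terms of $\partial\bar\partial\varphi_\rho$ are controlled by this particular curvature expression, rather than by the full curvature tensor, is what allows the loss of positivity to be absorbed into $\delta\varpi$ and prevents the approximation from degrading as one approaches $E_\zeta(T)$.
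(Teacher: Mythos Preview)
The paper does not actually prove this lemma: it is quoted verbatim from \cite{DemaillyReg} (with the remark that the same argument works on relatively compact subsets of a noncompact manifold), and is used as a black box in Step~1 of the proof of Lemma~\ref{lemma2.1}. So there is no ``paper's own proof'' to compare against; your sketch is an outline of Demailly's original argument.

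As such a sketch, your plan is on the right track and matches the architecture of \cite{DemaillyReg}: convolution along $\exp_z$ with a radial kernel, Taylor expansion of $\exp_z^*\omega$ in normal coordinates to identify the curvature error, and Kiselman--Legendre attenuation in $\log\rho$ to kill Lelong numbers below level $\zeta$. Two points deserve more care than you indicate. First, the family $\varphi_\rho$ coming from straight convolution is increasing (not decreasing) to $\varphi$ as $\rho\downarrow 0$, since quasi-psh functions satisfy a sub-mean-value inequality; you wrote ``decreasing''. Second, the statement asserts smoothness of $\varphi_{\zeta,\rho}$ on $M\setminus E_\zeta(T)$, and this is exactly what the Kiselman--Legendre step buys: one takes $\varphi_{\zeta,\rho}(z)=\inf_{0<t\le\rho}\bigl(\varphi_t(z)+K t-\zeta\log(t/\rho)\bigr)$ for a suitable constant $K$, and the infimum is attained at an interior $t$ (hence yields a smooth function) precisely where the Lelong number is $<\zeta$. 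Your proposal mentions this device only in passing; in a full proof it is the mechanism that produces both the $\zeta$-monotonicity and the smoothness off $E_\zeta(T)$, and it should be made explicit rather than described as ``for example a Legendre transform''.
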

	\begin{Remark}(see Remark 2.1 in \cite{ZZ2019})
		Lemma \ref{lemma24} is stated in \cite{DemaillyReg} in the case $M$ is a compact complex manifold. The similar proof as in \cite{DemaillyReg} shows that lemma \ref{lemma24} on noncompact complex manifold is still hold where the uniform estimate $(i)$ and $(ii)$ are obtained only on a relatively compact subset $\Omega$.
	\end{Remark}
	
	\begin{Lemma}
		(Theorem 1.5 in \cite{Demailly82})
		Let M be a K\"ahler manifold, and Z be an analytic subset of M. Assume that
		$\Omega$ is a relatively compact open subset of M possessing a complete K\"ahler
		metric. Then $\Omega\backslash Z $ carries a complete K\"ahler metric.
		\label{lemma25}
	\end{Lemma}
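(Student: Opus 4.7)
The plan is to perturb the given complete K\"ahler metric $\omega$ on $\Omega$ by a singular potential that blows up along $Z$, so the resulting $(1,1)$-form becomes a K\"ahler metric on $\Omega\backslash Z$ that is complete both toward $\partial\Omega$ (inherited from $\omega$) and toward $Z$ (the new singular direction). The key device, going back to Demailly, is a function of the form $\tilde\phi=-\log(-\phi)$, where $\phi$ is a strictly negative quasi-plurisubharmonic function on $\Omega$ whose polar set equals $Z\cap\Omega$.

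First I would build the potential $\phi$. Cover $\overline{\Omega}$ by finitely many relatively compact open sets $U_\alpha\subset M$ on each of which $Z\cap U_\alpha$ is cut out by finitely many holomorphic functions $f_{\alpha,1},\dots,f_{\alpha,N_\alpha}$. Set $\phi_\alpha:=\log\sum_j|f_{\alpha,j}|^2$ on $U_\alpha$ and glue these via a smooth partition of unity $\{\chi_\alpha\}$ subordinate to $\{U_\alpha\}$ to produce $\phi:=\log\bigl(\sum_\alpha \chi_\alpha\,e^{\phi_\alpha}\bigr)-C_0$, the constant $C_0$ being chosen so that $\phi<-1$ on $\Omega$. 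By construction $\phi\to-\infty$ exactly along $Z\cap\Omega$, and the relative compactness of $\Omega$ supplies a uniform lower bound $\sqrt{-1}\partial\bar\partial\phi\ge -C\omega$ for some constant $C>0$.

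Next, setting $\tilde\phi:=-\log(-\phi)$ on $\Omega\backslash Z$, the identity
\begin{equation*}
\sqrt{-1}\partial\bar\partial\tilde\phi=\sqrt{-1}\partial\tilde\phi\wedge\bar\partial\tilde\phi-\frac{1}{\phi}\sqrt{-1}\partial\bar\partial\phi
\end{equation*}
combined with $\phi<-1$ and $\sqrt{-1}\partial\bar\partial\phi\ge -C\omega$ gives $\sqrt{-1}\partial\bar\partial\tilde\phi\ge\sqrt{-1}\partial\tilde\phi\wedge\bar\partial\tilde\phi-C\omega$. Therefore $\hat\omega:=(C+1)\omega+\sqrt{-1}\partial\bar\partial\tilde\phi$ is a smooth positive-definite $(1,1)$-form on $\Omega\backslash Z$ satisfying $\hat\omega\ge\omega+\sqrt{-1}\partial\tilde\phi\wedge\bar\partial\tilde\phi$. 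In particular $\hat\omega$ is a K\"ahler metric there, and the second inequality forces $|\partial\tilde\phi|^2_{\hat\omega}\le 1$, so $\tilde\phi$ is Lipschitz with respect to the distance induced by $\hat\omega$.

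For the completeness of $\hat\omega$, a Cauchy sequence $\{x_n\}$ in $(\Omega\backslash Z,\hat\omega)$ has $\{\tilde\phi(x_n)\}$ Cauchy in $\mathbb{R}$ by the Lipschitz bound, hence stays in a sublevel set $\{\tilde\phi\ge -M\}$, which is a closed subset of $\Omega$ disjoint from $Z$; combined with $\hat\omega\ge\omega$ and the completeness of $\omega$ on $\Omega$, this forces the sequence to converge in $\Omega\backslash Z$. The main obstacle is the global construction of $\phi$ with both the uniform curvature bound $\sqrt{-1}\partial\bar\partial\phi\ge -C\omega$ and the prescribed polar locus: when the irreducible components of $Z$ have different dimensions, or when $Z$ has singular points, care is needed in the patching so that the glued $\phi$ has the right Lelong behaviour on every component of $Z$; relative compactness of $\Omega$ and compactness of $\overline{\Omega}$ make this technical step manageable.
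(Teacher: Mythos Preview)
Your argument is correct and is essentially Demailly's original proof of this result; the paper itself does not reprove the lemma but merely cites it from \cite{Demailly82}, so there is nothing to compare against. One small remark: from $\hat\omega\ge\sqrt{-1}\partial\tilde\phi\wedge\bar\partial\tilde\phi$ you get $|d\tilde\phi|_{\hat\omega}$ bounded (with Lipschitz constant possibly $2$ rather than $1$), which is all that is needed for the completeness argument.
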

	
	\begin{Lemma}
		(Lemma 6.9 in \cite{Demailly82})
		Let $\Omega$ be an open subset of $C^n$ and Z be a complex analytic subset of
		$\Omega$. Assume that $v$ is a (p,q-1)-form with $L^2_{loc}$ coefficients and h is
		a (p,q)-form with $L^1_{loc}$ coefficients such that $\bar{\partial}v=h$ on
		$\Omega\backslash Z$ (in the sense of distribution theory). Then
		$\bar{\partial}v=h$ on $\Omega$.
		\label{lemma26}
	\end{Lemma}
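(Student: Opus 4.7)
The statement is purely local, so I would first restrict to a small coordinate ball in which $Z$ is cut out by finitely many holomorphic functions $g_1,\dots,g_r$, and set $|g|^2:=\sum_j|g_j|^2$, which vanishes precisely on $Z$. The plan is to test the sought identity $\bar\partial v = h$ against arbitrary smooth compactly supported test forms $\phi$, by multiplying $\phi$ by a cutoff $\chi_\epsilon$ vanishing near $Z$ (so that $\chi_\epsilon\phi$ is supported in $\Omega\setminus Z$, where the hypothesis applies directly), and then letting $\epsilon\to 0$ while controlling the error produced by $\bar\partial\chi_\epsilon$.

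Concretely, fix a smooth $\theta\colon\mathbb{R}\to[0,1]$ with $\theta\equiv 0$ on $(-\infty,-1]$, $\theta\equiv 1$ on $[0,\infty)$, and for small $\epsilon>0$ define
$$
\chi_\epsilon(z):=\theta\!\left(\frac{\log|g(z)|^2 - 2\log\epsilon}{|\log\epsilon|}\right),
$$
so that $\chi_\epsilon=0$ on $\{|g|^2\le\epsilon^2\}$, $\chi_\epsilon=1$ on $\{|g|^2\ge\epsilon\}$, and $\chi_\epsilon\to 1$ pointwise on $\Omega\setminus Z$. Since $\chi_\epsilon\phi$ has compact support in $\Omega\setminus Z$, the hypothesis gives
$$
\int v\wedge\bar\partial(\chi_\epsilon\phi)=\pm\int h\wedge\chi_\epsilon\phi.
$$
Splitting $\bar\partial(\chi_\epsilon\phi)=\chi_\epsilon\,\bar\partial\phi+\bar\partial\chi_\epsilon\wedge\phi$, dominated convergence (using $h\in L^1_{\mathrm{loc}}$, $v\in L^2_{\mathrm{loc}}$, and that $Z$ has Lebesgue measure zero) handles the terms $\int\chi_\epsilon\,h\wedge\phi$ and $\int\chi_\epsilon v\wedge\bar\partial\phi$, producing the desired limits $\int h\wedge\phi$ and $\int v\wedge\bar\partial\phi$ respectively. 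The entire proof therefore reduces to showing that the error term $\int v\wedge\bar\partial\chi_\epsilon\wedge\phi$ tends to $0$.

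I would attack this error term via Cauchy--Schwarz, bounding it by $\|\phi\|_\infty\cdot\|v\|_{L^2(A_\epsilon\cap K)}\cdot\|\bar\partial\chi_\epsilon\|_{L^2(K)}$, where $K:=\operatorname{supp}\phi$ and $A_\epsilon:=\operatorname{supp}\bar\partial\chi_\epsilon\subseteq\{\epsilon^2\le|g|^2\le\epsilon\}$. Since $A_\epsilon$ shrinks to $Z$ and $v\in L^2_{\mathrm{loc}}$, the middle factor tends to $0$. The crux is therefore the uniform control of $\|\bar\partial\chi_\epsilon\|_{L^2(K)}$, and I expect this to be the main obstacle. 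From the pointwise bound $|\bar\partial\chi_\epsilon|\lesssim|\log\epsilon|^{-1}\,|\bar\partial|g|^2|/|g|^2$ on $A_\epsilon$, together with plurisubharmonicity of $\log|g|^2$ (which gives controlled $(2n-1)$-volumes of its sublevel shells by a Lelong-number argument near $Z_{\mathrm{sing}}$ and a direct coarea computation on $Z_{\mathrm{reg}}$), one obtains $\|\bar\partial\chi_\epsilon\|_{L^2(K)}^2=O(|\log\epsilon|^{-1})$, whence the error vanishes in the limit. Once this gradient estimate is in hand, the remaining steps are routine density and dominated convergence arguments, and the lemma follows.
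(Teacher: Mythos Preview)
The paper does not prove this lemma at all; it merely cites it as Lemma~6.9 of \cite{Demailly82} and uses it as a black box in the appendix (Step~3 of the proof of Lemma~\ref{lemma2.1}). So there is no ``paper's own proof'' to compare against.

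Your proposal is the standard argument and is essentially Demailly's original proof. The reduction to controlling the error term $\int v\wedge\bar\partial\chi_\epsilon\wedge\phi$ is correct, and your Cauchy--Schwarz splitting into $\|v\|_{L^2(A_\epsilon\cap K)}\cdot\|\bar\partial\chi_\epsilon\|_{L^2(K)}$ is the right move. The claimed bound $\|\bar\partial\chi_\epsilon\|_{L^2(K)}^2=O(|\log\epsilon|^{-1})$ is exactly what one gets in the model case $g=z_1$: the integrand is $|\log\epsilon|^{-2}|z_1|^{-2}$ on the shell $\{\epsilon\le|z_1|\le\sqrt{\epsilon}\}$, and the radial integral contributes a factor $|\log\epsilon|$. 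The one place where your write-up is thin is the passage from this model computation to the general case: for singular $Z$ you invoke a ``Lelong-number argument'' without details. In Demailly's treatment this step is handled by the Lelong--Poincar\'e inequality (or equivalently by the fact that $\log|g|^2$ is plurisubharmonic with locally bounded Lelong numbers, so the trace measure of $i\partial\bar\partial\log|g|^2$ gives uniform control on the level-set integrals via Stokes/Jensen). If you intend this as a complete proof rather than a sketch, that estimate deserves one or two more lines; otherwise the argument is sound.
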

	\subsection {Proof of Lemma \ref{lemma2.1}}
	\
	
	In this section, we prove Lemma \ref{lemma2.1}.
	
	Since $M$ is weakly pseudoconvex, there exists a smooth plurisubharmonic
	exhaustion function $P$ on M. Let $M_k:=\{P<k\}$ $(k=1,2,...,) $. We choose $P$ such that
	$M_1\ne \emptyset$.\par
	Then $M_k$ satisfies $M_1 \subset \subset M_2\subset \subset ...\subset \subset
	M_k\subset \subset M_{k+1}\subset \subset ...$ and $\cup_{k=1}^n M_k=M$. Each $M_k$ is complete weakly
	pseudoconvex K\"ahler manifold with exhaustion plurisubharmonic function
	$P_k=1/(k-P)$.
	\par
	\emph{We will fix $k$ during our discussion until step 9.}
	\\
	\par

	\emph{Step 1: Regularization of $\psi$,$\varphi$ and c(t)}
	\\
	\par
	We firstly introduce the regularization process of $\psi$ and $\varphi$.
	
	Take $a_i \in R$ ($i=1,...,n$) largely enough such that
	$\varpi=\sum\limits^{n}_{i=1}a_i dz_i\wedge d\bar{z_i}$ satisfies
	\begin{equation}\nonumber
		(\sqrt{-1}\Theta_{TM}+\varpi \otimes Id_{TM})(\kappa_1 \otimes \kappa_2,\kappa_1
		\otimes \kappa_2)\ge 0,
	\end{equation}
	for $\forall \kappa_1,\kappa_2 \in TM$ on $M_{k+1}$.
	
	\par
	Let $M=M_{k+1}$, $\Omega=M_{k}$, $T=\frac{\sqrt{-1}}{\pi}\partial\bar{\partial}\psi$
	, $\gamma =0$ in Lemma \ref{lemma24}, there exist a family of functions $\psi_{\zeta,\rho}$
	on $M_{k+1}$ such that
	\par
	$(i)\ \psi_{\zeta,\rho}$ is quasi-plurisubharmonic on a neighborhood of
	$\overline{M_k}$, smooth on $M_{k+1}\backslash E_{\zeta}(T)$, increasing with respect
	to $\zeta$ and $\rho$ on $M_k$ and converges to $\psi$ as $\rho \to 0$.
	\par
	$(ii)\
	\frac{\sqrt{-1}}{\pi}\partial\bar{\partial}\psi_{\zeta,\rho}>-\zeta\varpi-\delta_{\rho}\omega$
	on $M_k$,
	where $E_\zeta(T)=\{x\in M:v(T,x)\ge \zeta\}$ ($\zeta>0$) and $\delta_\rho$ is an increasing
	family of positive number such that $\lim\limits_{\rho \to
		0}\delta_{\rho}=0$.
	\par
	Set $\varphi_l=\max\{-l,\varphi\}$ and
	$T_l=\frac{\sqrt{-1}}{\pi}\partial\bar{\partial}\varphi_l$, where $l$ is a positive integer. Note that $v(T_l,x)=0$
	for all $x \in M_{k+1}$. Lemma \ref{lemma24} implies that there exists a family of functions
	$\varphi_{l,\zeta',\rho'}$ on $M_{k+1}$ such that \par
	$(i)\ \varphi_{l,\zeta',\rho'}$ is quasi-plurisubharmonic on a neighborhood of
	$\overline{M_k}$, smooth on $M_{k+1}$, increasing with respect to $\zeta'$ and $\rho'$ on
	$M_k$ and converges to $\varphi$ as $\rho' \to 0$.\par
	$(ii)\
	\frac{\sqrt{-1}}{\pi}\partial\bar{\partial}\varphi_{l,\zeta',\rho'}>-\zeta'\varpi-\delta_{l,\rho'}\omega$
	on $M_k$.
	Where $\delta'_{l,\rho'} $ is an increasing family of positive number such that
	$\lim\limits_{\rho' \to 0}\delta'_{l,\rho'}=0$.
	\par
	\emph{From now on, we will fix the positive integer $l$ during our discussion until step 7.}
	\par
	For fixed $l$, we can assume $\delta_x$ and $\delta'_{l,x}$ are the same function
	of variable $x$ (denoted by $\delta_x$), since we can replace them by
	$\max\{\delta_x,\delta'_{l,x}\}$.
	\par
	Since $M_k$ is relatively compact in M, there exists a positive number $b_k$ such
	that $b_k\omega \ge \varpi$ holds on $M_k$. Since we will fix $k$ until step 9, we will denote $b_k$ by $b$ for simplicity.
	\par
	Take $\zeta=\delta_\rho$ and denote $\psi_{\zeta,\delta_\rho}$ by $\psi_\rho$. For
	fixed $l$, take $\zeta'=\delta_{l,\rho'}$ and denote
	$\varphi_{l,\zeta',\delta_{\rho'}}$ by $\varphi_{l,\rho'}$.
	Take $\rho=\frac{1}{m}$ and $\rho'=\frac{1}{m'}$ (where $m,m'=1,2,3...$ ), we have
	two sequence of functions $\psi_m$(=$\psi_{\frac{1}{m}}$) and $\varphi_{l,m'}$
	(=$\varphi_{l,\frac{1}{m'}}$) satisfy the following:
	\par
	(1) $\psi_m$ is quasi-plurisubharmonic on $\overline{M_k}$, smooth on $M_{k+1} \backslash
	E_{\delta_m}(T)$, decreasing with respect to m on $M_k$, converges to $\psi$ as
	$m\to +\infty$ and
	\begin{equation}\nonumber
		T_m=\frac{\sqrt{-1}}{\pi}\partial\bar{\partial}\psi_m \ge -\delta_m b
		\omega-\delta_m \omega \ge -2b\delta_m \omega
	\end{equation}
	on $M_{k}$.
	\par
	(2) $\varphi_{l,m'}$ is quasi-plurisubharmonic on $\overline{M_k}$, smooth on $M_{k+1} $, decreasing with respect to $m'$ on $M_k$, converges to $\varphi_l$ as $m'\to +\infty$
	and \par
	\begin{equation}\nonumber
		T_{l,m'}=\frac{\sqrt{-1}}{\pi}\partial\bar{\partial}\varphi_{l,m'} \ge
		-\delta_{m'} b \omega-\delta_{m'} \omega \ge -2b\delta_{m'} \omega
	\end{equation}
	on $M_{k}$.\par
	where $\delta_n$ is an decreasing sequence of positive number such that
	$\lim\limits_{n \to +\infty}\delta_n=0$.
	\par
	As $\psi_m$ and $\varphi_{l,m'}$ decreasing convergent to $\psi$ and $\varphi_l$
	respectively.
	We may also assuming that $\sup\limits_m \sup\limits_{M_k}\psi_m<-S$ and $\sup\limits_{m'}
	\sup\limits_{M_k}\varphi_{l,m'}<+\infty$.
	
	We now introduce the regularization process of $c(t)$.
	
	Let $f(x)=2\mathbb{I}_{(-\frac{1}{2},\frac{1}{2})}\ast\rho(x)$ be a smooth function on $\mathbb{R}$, where $\rho$ is the kernel of convolution satisfying $supp(\rho)\subset (-\frac{1}{3},\frac{1}{3})$ and $\rho>0$.
	
	Let $g_n(x)=\left\{ \begin{array}{rcl}
		nf(nx) & \mbox{if}
		&x\le 0 \\ nf(n^2 x) & \mbox{if} & x>0
	\end{array}\right.$, then $\{g_n\}_{n\in \mathbb{N}^+}$ is a family of smooth functions on $\mathbb{R}$ satisfying:
	
	(1) $supp(g)\subset [-\frac{1}{n},\frac{1}{n}]$, $g_n(x)\ge 0$ for any $x\in\mathbb{R}$,
	
	(2) $\int_{-\frac{1}{n}}^0 g_n(x)dx=1$, $\int^{\frac{1}{n}}_0 g_n(x)dx\le\frac{1}{n}$ for any $n \in \mathbb{N}^+$.
	
	Set $c_n(t)=e^t\int_{\mathbb{R}}h(e^y(t-S)+S)g_n(y)dy,$ where $h(t)=c(t)e^{-t}$ and $c(t)\in \tilde{\mathcal{P}}_S$. It is easy to get
	$$c_n(t)-c(t)\ge e^t(\int_{-\frac{1}{n}}^0 (h(e^y(t-S)+S)-h(t))g_n(y))dy\ge 0.$$
	Set $\tilde{h}(t)=h(e^t+S)$ and $\tilde{g}(t)=g(-t)$, then $c_n(t)=e^t\tilde{h}\ast \tilde{g}_n(\ln(t-s))\in C^{\infty}(S,+\infty)$. Because $h(t)$ is decreasing with respect to $t$, so is $c_n(t)e^{-t}$. As
	\begin{equation}\nonumber
		\begin{split}
			\int_S^s c_n(t)e^{-t}dt&=\int_S^s \int_{\mathbb{R}}h(e^{y}(t-S)+S)g_n(y)dydt  \\
			&=\int_{\mathbb{R}}e^{-y}g_n(y)\int_S^{e^y(s-S)+S}h(t)dtdy\\
			&\le \int_{\mathbb{R}}e^{-y}g_n(y)dy\int_S^{e(s-S)+S}h(t)dtdy\\
			&<+\infty,
		\end{split}
	\end{equation}
	then $c_n(t)\in\tilde{\mathcal{P}}_S$ for any $n\in \mathbb{N}^+$.
	
	As $h(t)$ is decreasing with respect to $t$, then set $h^{-}(t)=\lim\limits_{s\to t-0}h(s)\ge h(t)$ and $c^{-}(t)=\lim\limits_{s\to t-0}c(s)\ge c(t)$, then we claim that $\lim\limits_{n\to +\infty}c_n(t)=c^{-}(t)$. In fact, we have
	\begin{equation}
		\begin{split}
			|c_n(t)-c^{-}(t)|&\le e^t\int_{-\frac{1}{n}}^0|h(e^y(t-S)+S)-h^{-}(t)|g_n(y)dy\\
			&+e^t\int_{0}^{\frac{1}{n}}h(e^y(t-S)+S)g_n(y)dy.
			\label{clt}
		\end{split}
	\end{equation}
	For any $\epsilon>0 $, there exists $\delta>0$ such that $|h(t-\delta)-h^{-}(t)|<\epsilon$. Then $\exists N>0$, such that for any $n>N$, $e^y(t-S)+S>t-\delta$ for all $y \in [-\frac{1}{n},0)$ and $\frac{1}{n}<\epsilon$. It follows from \eqref{clt} that
	$$|c_n(t)-c^{-}(t)|\le \epsilon e^t+\epsilon h(t)e^t,$$
	hence $\lim\limits_{n\to+\infty}c_n(t)=c^{-}(t)$ for any $t>S$.
	
	\
	
	\emph{Step 2: Recall some notations}
	\\
	\par
	To simplify our notation, we denote $b_{t_0,B}(t)$ by $b(t)$ and $v_{t_0,B}(t)$ by $v(t)$.
	
	Let $\epsilon \in (0,\frac{1}{8}B)$. Let $\{v_\epsilon\}_{\epsilon \in
		(0,\frac{1}{8}B)}$ be a family of smooth increasing convex functions on $\mathbb{R}$, such
	that:
	\par
	(1) $v_{\epsilon}(t)=t$ for $t\ge-t_0-\epsilon$, $v_{\epsilon}(t)=constant$ for
	$t<-t_0-B+\epsilon$;\par
	(2) $v_{\epsilon}{''}(t)$ are pointwisely convergent
	to $\frac{1}{B}\mathbb{I}_{(-t_0-B,-t_0)}$,when $\epsilon \to 0$, and $0\le
	v_{\epsilon}{''}(t) \le \frac{2}{B}\mathbb{I}_{(-t_0-B+\epsilon,-t_0-\epsilon)}$
	for ant $t \in \mathbb{R}$;\par
	(3) $v_{\epsilon}{'}(t)$ are pointwisely convergent to $b(t)$ which is a continuous
	function on R when $\epsilon \to 0$ and $0 \le v_{\epsilon}{'}(t) \le 1$ for any
	$t\in \mathbb{R}$.\par
	One can construct the family $\{v_\epsilon\}_{\epsilon \in (0,\frac{1}{8}B)}$  by
	the setting
	\begin{equation}\nonumber
		\begin{split}
			v_\epsilon(t):=&\int_{-\infty}^{t}(\int_{-\infty}^{t_1}(\frac{1}{B-4\epsilon}
			\mathbb{I}_{(-t_0-B+2\epsilon,-t_0-2\epsilon)}*\rho_{\frac{1}{4}\epsilon})(s)ds)dt_1\\
			&-\int_{-\infty}^{-t_0}(\int_{-\infty}^{t_1}(\frac{1}{B-4\epsilon}
			\mathbb{I}_{(-t_0-B+2\epsilon,-t_0-2\epsilon)}*\rho_{\frac{1}{4}\epsilon})(s)ds)dt_1-t_0,
		\end{split}
	\end{equation}
	where $\rho_{\frac{1}{4}\epsilon}$ is the kernel of convolution satisfying
	$supp(\rho_{\frac{1}{4}\epsilon})\subset
	(-\frac{1}{4}\epsilon,{\frac{1}{4}\epsilon})$.
	Then it follows that
	\begin{equation}\nonumber
		v_\epsilon{''}(t)=\frac{1}{B-4\epsilon}
		\mathbb{I}_{(-t_0-B+2\epsilon,-t_0-2\epsilon)}*\rho_{\frac{1}{4}\epsilon}(t),
	\end{equation}
	and
	\begin{equation}\nonumber
		v_\epsilon{'}(t)=\int_{-\infty}^{t}(\frac{1}{B-4\epsilon}
		\mathbb{I}_{(-t_0-B+2\epsilon,-t_0-2\epsilon)}*\rho_{\frac{1}{4}\epsilon})(s)ds.
	\end{equation}
	\par
	Let $\eta=s(-v_\epsilon(\psi_m))$ and $\phi=u(-v_\epsilon(\psi_m))$, where $s \in
	C^{\infty}((T,+\infty))$ satisfies $s\ge 0$ and $u\in C^{\infty}((T,+\infty))$, such that $u''s-s''>0$
	and $s'-u's=1$. It follows form $\sup\limits_m \sup\limits_{M_k}\psi_m<-T$  that
	$\phi=u(-v_\epsilon(\psi_m))$ are uniformly bounded on $M_k$ with respect to $m$
	and $\epsilon$, and $u(-v_\epsilon(\psi))$ are uniformly bounded on $M_k$ with
	respect to $\epsilon$.
	Let $\Phi=\phi+\varphi_{l,m'}$ and let $\widetilde{h}=e^{-\Phi}$.
	\\
	\par
	
	\emph{Step 3: Solving $\bar{\partial}$-equation with correcting term}
	\\
	\par
	
	Set $B=[\eta \sqrt{-1}\Theta_{\widetilde{h}}-\sqrt{-1}\partial \bar{\partial}
	\eta-\sqrt{-1}g\partial\eta \wedge\bar{\partial}\eta, \Lambda_{\omega}]$, where
	$g$
	is a positive continuous function on $M_k$. We will determine $g$ by calculations
	\begin{equation}\nonumber
		\begin{split}
			\partial\bar{\partial}\eta=&
			-s'(-v_{\epsilon}(\psi_m))\partial\bar{\partial}(v_{\epsilon}(\psi_m))
			+s''(-v_{\epsilon}(\psi_m))\partial(v_{\epsilon}(\psi_m))\wedge
			\bar{\partial}(v_{\epsilon}(\psi_m)),\\
			\eta\Theta_{\widetilde{h}}=&\eta\partial\bar{\partial}\phi+\eta\partial\bar{\partial}\varphi_{l,m'}\\
			=&su''(-v_{\epsilon}(\psi_m))\partial(v_{\epsilon}(\psi_m))\wedge
			\bar{\partial}(v_{\epsilon}(\psi_m))
			-su'(-v_{\epsilon}(\psi_m))\partial\bar{\partial}(v_{\epsilon}(\psi_m))\\
			+&s\partial\bar{\partial}\varphi_{l,m'}.
		\end{split}
	\end{equation}
	\par
	Hence
	\begin{equation}\nonumber
		\begin{split}
			&\eta \sqrt{-1}\Theta_{\widetilde{h}}-\sqrt{-1}\partial \bar{\partial}
			\eta-\sqrt{-1}g\partial\eta \wedge\bar{\partial}\eta\\
			=&s\sqrt{-1}\partial\bar{\partial}\varphi_{l,m'}\\
			+&(s'-su')(v'_{\epsilon}(\psi_m)\sqrt{-1}\partial\bar{\partial}(\psi_m)+
			v''_\epsilon(\psi_m)\sqrt{-1}\partial(\psi_m)\wedge\bar{\partial}(\psi_m))\\
			+&[(u''s-s'')-gs'^2]\partial(v_\epsilon(\psi_m))\wedge\bar{\partial}(v_\epsilon(\psi_m)).
		\end{split}
	\end{equation}
	\par
	Let $g=\frac{u''s-s''}{s'^2}(-v_\epsilon(\psi_m))$ and note that $s'-su'=1$,
	$v'_\epsilon(\psi_m)\ge 0$. Then
	\begin{equation}\nonumber
		\begin{split}
			&\eta \sqrt{-1}\Theta_{\widetilde{h}}-\sqrt{-1}\partial \bar{\partial}
			\eta-\sqrt{-1}g\partial\eta \wedge\bar{\partial}\eta\\
			=&s\sqrt{-1}\partial\bar{\partial}\varphi_{l,m'}+v'_{\epsilon}(\psi_m)\sqrt{-1}\partial\bar{\partial}(\psi_m)+
			v''_\epsilon(\psi_m)\sqrt{-1}\partial(\psi_m)\wedge\bar{\partial}(\psi_m)\\
			\ge&-s2b\delta_{m'}\omega-v'_\epsilon(\psi_m)2b\delta_m\omega+
			v''_\epsilon(\psi_m)\sqrt{-1}\partial(\psi_m)\wedge\bar{\partial}(\psi_m).
		\end{split}
	\end{equation}
	\par
	Note that $0 \le v'_\epsilon(\psi_m) \le 1$ is uniformly bounded on $M_k$ with respect to
	$m$ and $\epsilon$. By the construction of $v_\epsilon(t)$ and $\sup\limits_m
	\sup\limits_{M_k}\psi_m<-S$, $s(-v_\epsilon(\psi_m))$ is uniformly bounded above on $M_k$ with respect to $m$
	and $\epsilon$.
	\par
	Let $\tilde{M}$ be the common upper bound of $v'_\epsilon(\psi_m)$ and $s(-v_\epsilon(\psi_m))$, then on $M_k$
	\begin{equation}\nonumber
		\begin{split}
			&\eta \sqrt{-1}\Theta_{\widetilde{h}}-\sqrt{-1}\partial \bar{\partial}
			\eta-\sqrt{-1}g\partial\eta \wedge\bar{\partial}\eta\\
			\ge&-s2b\delta_{m'}\omega-v'_\epsilon(\psi_m)2b\delta_m\omega+
			v''_\epsilon(\psi_m)\sqrt{-1}\partial(\psi_m)\wedge\bar{\partial}(\psi_m)\\
			\ge&-2b\tilde{M}(\delta_m+\delta_{m'})\omega+
			v''_\epsilon(\psi_m)\sqrt{-1}\partial(\psi_m)\wedge\bar{\partial}(\psi_m).
		\end{split}
	\end{equation}
	Hence
	\begin{equation}
		\begin{split}
			&\langle(B+2b\tilde{M}(\delta_m+\delta_{m'})I)\alpha,\alpha\rangle_{\widetilde h}\\
			\ge&\langle[v''_\epsilon(\psi_m)\partial(\psi_m)\wedge\bar{\partial}(\psi_m),
			\Lambda_{\omega}]\alpha,\alpha\rangle_{\widetilde h}\\
			=&\langle(v''_\epsilon(\psi_m)\bar{\partial}(\psi_m)
			\wedge(\alpha\llcorner(\bar{\partial}\psi_m)^{\sharp})),\alpha\rangle_{\widetilde
				h}.
		\end{split}
		\label{320}
	\end{equation}
	\par
	By Lemma \ref{lemma22}, $B+2b\tilde{M}(\delta_m+\delta_{m'})I$ is semipositive.
	\par
	Using the definition of contraction, Cauchy-Schwarz inequality
	and the inequality \eqref{320}, we have
	\begin{equation}
		\begin{split}
			|\langle
			v''_\epsilon(\psi_m)\bar{\partial}\psi_m\wedge\gamma,\widetilde{\alpha}\rangle_
			{\widetilde h}|^2=
			&|\langle
			v''_\epsilon(\psi_m)\gamma,\widetilde{\alpha}\llcorner(\bar{\partial}\psi_m)^{\sharp}
			\rangle_{\widetilde h}|^2\\
			\le&\langle
			(v''_\epsilon(\psi_m)\gamma,\gamma)
			\rangle_{\widetilde h}
			(v''_\epsilon(\psi_m))|\widetilde{\alpha}\llcorner(\bar{\partial}\psi_m)^{\sharp}|^2_{\widetilde
				h}\\
			=&\langle
			(v''_\epsilon(\psi_m)\gamma,\gamma)
			\rangle_{\widetilde h}
			\langle
			(v''_\epsilon(\psi_m))\bar{\partial}\psi_m\wedge
			(\widetilde{\alpha}\llcorner(\bar{\partial}\psi_m)^{\sharp}),\widetilde{\alpha}
			\rangle_{\widetilde h}\\
			\le&\langle
			(v''_\epsilon(\psi_m)\gamma,\gamma)
			\rangle_{\widetilde h}
			\langle
			(B+2b\tilde{M}(\delta_m+\delta_{m'}I)\widetilde{\alpha},\widetilde{\alpha})
			\rangle_{\widetilde h}
			\label{421}
		\end{split}
	\end{equation}
	for any $(n,0)$ form $\gamma$ and (n,1)-form $\widetilde{\alpha}$.
	\par
	As $F$ is holomorphic on $\{\psi < -t_0\}\supset \supset Supp(1-v'_\epsilon(\psi_m))$
	, then $\lambda:= \bar{\partial}[(1-v'_{\epsilon}(\psi_m))F] $ is well-defined and smooth on
	$M_k$.
	\par
	Taking  $\gamma=F$, $\widetilde{\alpha}=
	(B+2b\tilde{M}(\delta_m+\delta_{m'}I))^{-1}(\bar{\partial}v'_{\epsilon}(\psi_m))\wedge
	F$. Note that $\widetilde h=e^{-\Phi}$, using inequality (\ref{421}), we have
	\begin{equation}\nonumber
		\langle
		(B+2b\tilde{M}(\delta_m+\delta_m')I)^{-1}\lambda,\lambda
		\rangle_{\widetilde h}
		\le
		v''_{\epsilon}(\psi_m)|F|^2e^{-\Phi}.
	\end{equation}
	\par
	Then it follows that
	\begin{equation}\nonumber
		\int_{M_k \backslash E_{\delta_m}(T)}\langle
		(B+2b\tilde{M}(\delta_m+\delta_m'))^{-1}\lambda,\lambda
		\rangle_{\widetilde h}
		\le
		\int_{M_k \backslash E_{\delta_m}(T)}
		v''_{\epsilon}(\psi_m)|F|^2e^{-\Phi}
		< +\infty.
		\label{323}
	\end{equation}
	\par
	By Lemma \ref{lemma25}, $M_k \backslash E_{\delta_m}(T)$ carries a complete K\"ahler metric.
	\par
	Using Lemma \ref{lemma23}, there exists $u_{k,m,m',\epsilon,l} \in L^2(M_k \backslash
	E_{\delta_m}(T),K_M)$ and $h_{k,m,m',\epsilon,l} \in L^2(M_k \backslash
	E_{\delta_m}(T),\wedge^{n,1}T^*M)$ such that
	\begin{equation}
		D''u_{k,m,m',\epsilon,l}+\sqrt{2b\tilde{M}(\delta_m+\delta_m')}h_{k,m,m',\epsilon,l}=\lambda,
		\label{324}
	\end{equation}
	and
	\begin{equation}
		\begin{split}
			&\int_{M_k \backslash E_{\delta_m}(T)}
			\frac{1}{\eta+g^{-1}}|u_{k,m,m',\epsilon,l}|^2 e^{-\Phi}+
			\int_{M_k \backslash E_{\delta_m}(T)}
			|h_{k,m,m',\epsilon,l}|^2 e^{-\Phi}\\
			\le&
			\int_{M_k \backslash E_{\delta_m}(T)}
			\langle
			(B+2b\tilde{M}(\delta_m+\delta_m')I)^{-1}\lambda,\lambda
			\rangle_{\widetilde h}\\
			\le&
			\int_{M_k \backslash E_{\delta_m}(T)}
			v''_{\epsilon}(\psi_m)|F|^2 e^{-\Phi}
			<+\infty.
			\label{325}
		\end{split}
	\end{equation}
	\par
	Note that $g=\frac{u''s-s''}{s'^2}(-v_\epsilon(\psi_m))$. Assume that we can choose
	$\eta$ and $\phi$ such that
	$(\eta+g^{-1})^{-1}=e^{v_\epsilon(\psi_m)}e^{\phi}c_n(-v_\epsilon(\psi_m))$. Now the solution $u$ and correcting term $h$ rely on the parameter $n$, hence we denote $u$ and $h$ by $u_{n,k,m,m',\epsilon,l}$ and $h_{n,k,m,m',\epsilon,l}$. Then
	inequality (\ref{325}) becomes
	\begin{equation}
		\begin{split}
			&\int_{M_k \backslash E_{\delta_m}(T)}
			|u_{n,k,m,m',\epsilon,l}|^2
			e^{v_\epsilon(\psi_m)-\varphi_{l,m'}}c_n(-v_\epsilon(\psi_m))+
			\int_{M_k \backslash E_{\delta_m}(T)}
			|h_{n,k,m,m',\epsilon,l}|^2 e^{-\phi-\varphi_{l,m'}}\\
			\le&
			\int_{M_k \backslash E_{\delta_m}(T)}
			v''_{\epsilon}(\psi_m)|F|^2 e^{-\phi-\varphi_{l,m'}}
			<+\infty.
		\end{split}
		\label{326}
	\end{equation}
	\par
	Note that on $M_k \subset \subset M_{k+1}$, for fixed $m',m,\epsilon,l$, functions
	$e^{-\varphi_{l,m'}}$is smooth hence bounded and by the construction of
	$v_\epsilon$ and $\sup\limits_m \sup\limits_{M_k}\psi_m<-S$,
	$e^{v_\epsilon(\psi_m)}c_n(-v_\epsilon(\psi_m))$ is also bounded. Hence from
	\eqref{326}, we know
	\begin{equation}\nonumber
		\begin{split}
			&u_{n,k,m,m',\epsilon,l} \in L^2(M_k,K_M),\\
			&h_{n,k,m,m',\epsilon,l} \in L^2(M_k,\wedge^{n,1}T^*M).
		\end{split}
	\end{equation}
	\par
	It follow from(\ref{324}), (\ref{326})and Lemma \ref{lemma26}, that
	\begin{equation}
		D''u_{n,k,m,m',\epsilon,l}+\sqrt{2b\tilde{M}(\delta_m+\delta_m')}h_{n,k,m,m',\epsilon,l}=\lambda \label {equation}
	\end{equation}
	holds on $M_k$ and
	\begin{equation}
		\begin{split}
			&\int_{M_k}
			|u_{n,k,m,m',\epsilon,l}|^2
			e^{v_\epsilon(\psi_m)-\varphi_{l,m'}}c_n(-v_\epsilon(\psi_m))+
			\int_{M_k}
			|h_{n,k,m,m',\epsilon,l}|^2 e^{-\phi-\varphi_{l,m'}}\\
			\le&
			\int_{M_k }
			v''_{\epsilon}(\psi_m)|F|^2 e^{-\phi-\varphi_{l,m'}}.
		\end{split}
		\label{329}
	\end{equation}
	\par
	\
	
	\emph{Step 4: when $m'$ goes to $+\infty$.}
	\\
	\par
	As $\varphi_{l,m'}$ decreasingly converge to $\varphi_l$ as $m' \to +\infty$, by
	Levi's theorem
	\begin{equation}
		\begin{split}
			&\lim\limits_{m' \to +\infty} \int_{M_k }
			v''_{\epsilon}(\psi_m)|F|^2 e^{-\phi-\varphi_{l,m'}}
			=\int_{M_k }
			v''_{\epsilon}(\psi_m)|F|^2 e^{-\phi-\varphi_{l}}\\
			\le&
			\sup\limits_{m}\sup\limits_{\epsilon}\sup\limits_{M_k}|e^{-\phi-\varphi_l}|
			\int_{M_k}
			\frac{2}{B}\mathbb{I}_{\{\psi<-t_0\}}|F|^2
			<+\infty.
		\end{split}
		\label{330}
	\end{equation}
	
	Note that
	$\inf\limits_{m'}\inf\limits_{M_k}
	e^{v_{\epsilon}(\psi_m)-\varphi_{l,m'}}c_n(-v_\epsilon(\psi_m))>0$, then it follows
	from (\ref{329}) and (\ref{330}) that $\sup\limits_{m'}\int_{M_k }
	|u_{n,k,m,m',\epsilon,l}|^2 < +\infty$.\par
	
	Therefore the solution $u_{k,m,m',\epsilon,l}$ are uniformly bounded in $L^2$ norm
	with respect to $m'$ on $M_k$. Since the closed unit ball of the Hilbert space is
	weakly compact, we can extract a subsequence $u_{n,k,m,m'',\epsilon,l}$ weakly
	converge to $u_{n,k,m,\epsilon,l}$ in $L^2(M_k,K_M)$.
	\par
	Now we want to show that for fixed $m'$,  $u_{n,k,m,m'',\epsilon,l}\sqrt{e^{v_\epsilon(\psi_m)-\varphi_{l,m'}}c(-v_\epsilon(\psi_m))}$ weakly converge to $u_{n,k,m,\epsilon,l}\sqrt{e^{v_\epsilon(\psi_m)-\varphi_{l,m'}}c(-v_\epsilon(\psi_m))}$
	in $L^2(M_k,K_M)$.
	\par
	Let $g \in L^2(M_k,K_M)$. As for fixed $m'$,  $|e^{v_\epsilon(\psi_m)-\varphi_{l,m'}}c_n(-v_\epsilon(\psi_m))|$ is bounded on
	$\overline{M_k}$, then $\sqrt{e^{v_\epsilon(\psi_m)-\varphi_{l,m'}}c_n(-v_\epsilon(\psi_m))}g \in L^2(M_k,K_M)$. \par
	As $u_{n,k,m,m'',\epsilon,l}$ weakly converge to $u_{n,k,m,\epsilon,l}$, we have
	\begin{equation}\nonumber
		\begin{split}
			&\lim_{m'' \to +\infty} \langle u_{n,k,m,m'',\epsilon,l},\sqrt{e^{v_\epsilon(\psi_m)-\varphi_{l,m'}}c_n(-v_\epsilon(\psi_m))}g \rangle
			\\
			=&
			\langle u_{n,k,m,\epsilon,l},\sqrt{e^{v_\epsilon(\psi_m)-\varphi_{l,m'}}c_n(-v_\epsilon(\psi_m))}g \rangle.
		\end{split}
	\end{equation}
	\par
	This means
	\begin{equation}\nonumber
		\begin{split}
			&\lim_{m'' \to +\infty} \langle u_{n,k,m,m'',\epsilon,l}\sqrt{e^{v_\epsilon(\psi_m)-\varphi_{l,m'}}c_n(-v_\epsilon(\psi_m))},g \rangle
			\\=&
			\langle u_{n,k,m,\epsilon,l}\sqrt{e^{v_\epsilon(\psi_m)-\varphi_{l,m'}}c_n(-v_\epsilon(\psi_m))},g \rangle.
		\end{split}
	\end{equation}
	\par
	Hence for fixed $m'$,  $u_{n,k,m,m'',\epsilon,l}\sqrt{e^{v_\epsilon(\psi_m)-\varphi_{l,m'}}c_n(-v_\epsilon(\psi_m))}$ weakly converge to \\ $u_{n,k,m,\epsilon,l}\sqrt{e^{v_\epsilon(\psi_m)-\varphi_{l,m'}}c_n(-v_\epsilon(\psi_m))}$
	in $L^2(M_k,K_M)$.
	\par
	Then
	\begin{flalign}
		&\int_{M_k}
		|u_{n,k,m,\epsilon,l}|^2 e^{v_\epsilon(\psi_m)-\varphi_{l,m'}}c_n(-v_\epsilon(\psi_m))\nonumber\\
		\le &
		\liminf\limits_{m'' \to +\infty}
		\int_{M_k}
		|u_{n,k,m,m'',\epsilon,l}|^2
		e^{v_\epsilon(\psi_m)-\varphi_{l,m'}}c_n(-v_\epsilon(\psi_m))\nonumber\\
		\le &
		\liminf\limits_{m'' \to +\infty}
		\int_{M_k}
		|u_{n,k,m,m'',\epsilon,l}|^2
		e^{v_\epsilon(\psi_m)-\varphi_{l,m''}}c_n(-v_\epsilon(\psi_m))\nonumber\\
		\le &
		\liminf\limits_{m'' \to +\infty}
		\int_{M_k}
		v''_{\epsilon}(\psi_m)|F|^2e^{-\phi-\varphi_{l,m''}}\nonumber\\
		=&
		\int_{M_k}
		v''_{\epsilon}(\psi_m)|F|^2e^{-\phi-\varphi_{l}}.\nonumber
	\end{flalign}
	\par
	By Levi Theorem
	\begin{equation}\nonumber
		\begin{split}
			&\lim_{m' \to +\infty}\int_{M_k}
			|u_{n,k,m,\epsilon,l}|^2 e^{v_\epsilon(\psi_m)-\varphi_{l,m'}}c_n(-v_\epsilon(\psi_m))
			\\=&
			\int_{M_k}
			|u_{n,k,m,\epsilon,l}|^2 e^{v_\epsilon(\psi_m)-\varphi_{l}}c_n(-v_\epsilon(\psi_m)).
		\end{split}
	\end{equation}
	\par
	Hence
	\begin{equation}\nonumber
		\begin{split}
			\int_{M_k}
			|u_{n,k,m,\epsilon,l}|^2 e^{v_\epsilon(\psi_m)-\varphi_{l}}c_n(-v_\epsilon(\psi_m))
			\le
			\int_{M_k}
			v''_{\epsilon}(\psi_m)|F|^2e^{-\phi-\varphi_{l}} .
			\label{s4}
		\end{split}
	\end{equation}
	\par
	As
	$\inf\limits_{m''}\inf\limits_{M_k}
	e^{-\phi-\varphi_{l,m''}}>0$, we also have  $\sup\limits_{m''}\int_{M_k }
	|h_{n,k,m,m'',\epsilon,l}|^2 < +\infty$.\par
	We can extract a subsequence $h_{n,k,m,m''',\epsilon,l}$ weakly
	convergence to $h_{n,k,m,\epsilon,l}$ in $L^2(M_k,\wedge^{n,1}T^*M)$ when $m'''\to +\infty$.
	\par
	Using  similar arguments as above, we know
	\begin{equation}
		\begin{split}
			\int_{M_k}
			|h_{n,k,m,\epsilon,l}|^2e^{-\phi-\varphi_l}
			\le
			\int_{M_k}
			v''_{\epsilon}(\psi_m)|F|^2e^{-\phi-\varphi_{l}}.
			\label{estimate of h}
		\end{split}
	\end{equation}
	Replace $m'$ by $m'''$ in (\ref{equation}) and let $m'''\to +\infty$,
	we have
	\begin{equation}
		D''u_{n,k,m,\epsilon,l}+\sqrt{2b\tilde{M}\delta_m}h_{n,k,m,\epsilon,l}=\lambda.
		\label{equation2}
	\end{equation}
	
	\
	\par
	\emph{Step 5: when $m$ goes to $+\infty$.}
	\\
	\par
	
	As $e^{-\varphi_l}< +\infty$  and
	$
	\sup\limits_{m,\epsilon}|\phi|=
	\sup\limits_{m,\epsilon}|u(-v_\epsilon(\psi_m))|< +\infty
	$ on $M_k$,
	then
	$\sup\limits_{m,\epsilon}e^{-\phi-\varphi_{l}}< +\infty$.
	Note that
	\begin{equation}\nonumber
		v''_{\epsilon}(\psi_m)|F|^2 e^{-\phi-\varphi_l} \le
		\frac{2}{B}\mathbb{I}_{\{\psi<-t_0\}}|F|^2
		\sup\limits_{m,\epsilon}e^{-\phi-\varphi_l}
	\end{equation}
	on $M_k$, then it follows from the dominated convergence theorem and
	\begin{equation}\nonumber
		\int_{\{\psi<-t_0\}\cap \overline{M_k}}|F|^2 < +\infty,
	\end{equation}
	we have
	\begin{equation}
		\begin{split}
			\lim\limits_{m \to +\infty} \int_{M_k }
			v''_{\epsilon}(\psi_m)|F|^2 e^{-\phi-\varphi_{l}}
			=\int_{M_k }
			v''_{\epsilon}(\psi)|F|^2 e^{-u(v_\epsilon(\psi))-\varphi_{l}}.
			\label{s5.1}
		\end{split}
	\end{equation}
	
	\par
	As $\inf\limits_{m}\inf\limits_{M_k}
	e^{v_{\epsilon}(\psi_m)-\varphi_{l}}c_n(-v_\epsilon(\psi_m))>0$, then it follows from
	(\ref{s4}) and (\ref{s5.1}) that $\sup\limits_{m}\int_{M_k } |u_{n,k,m,\epsilon,l}|^2
	< +\infty$.
	\par
	Therefore $u_{n,k,m,\epsilon,l}$ are uniformly bounded in $L^2$ norm
	with respect to $m$ on $M_k$. Since the closed unit ball of the Hilbert space is
	weakly compact, we can extract a subsequence $u_{n,k,m_{i},\epsilon,l}$ weakly
	converge to $u_{n,k,\epsilon,l}$ in $L^2(M_k,K_M)$. Note that
	$\sqrt{e^{v_\epsilon(\psi_{m})-\varphi_{l}}c_n(-v_\epsilon(\psi_{m}))}$ are
	pointwisely convergent to $\sqrt{e^{v_\epsilon(\psi)-\varphi_{l}}c_n(-v_\epsilon(\psi))}$, as $m\to +\infty$.
	\par
	We want to prove, as $m_{i}\to +\infty$, $u_{n,k,m_{i},\epsilon,l}\sqrt{e^{v_\epsilon(\psi_{m_i})-\varphi_{l}}c_n(-v_\epsilon(\psi_{m_i}))}$
	weakly converge to
	$u_{n,k,\epsilon,l}\sqrt{e^{v_\epsilon(\psi)-\varphi_{l}}c_n(-v_\epsilon(\psi))}$. Take arbitrary $h$ in $L^2(M_k,K_M)$.\par
	Consider
	\begin{equation}\nonumber
		\begin{split}
			I=&|\langle u_{n,k,m_{i},\epsilon,l}\sqrt{e^{v_\epsilon(\psi_{m_i})-\varphi_{l}}c_n(-v_\epsilon(\psi_{m_i}))},h\rangle-
			\langle
			u_{n,k,\epsilon,l}\sqrt{e^{v_\epsilon(\psi)-\varphi_{l}}c_n(-v_\epsilon(\psi))},h\rangle|\\
			=&
			|\int_{M_k}\sqrt{e^{v_\epsilon(\psi_{m_i})-\varphi_{l}}c_n(-v_\epsilon(\psi_{m_i}))}u_{n,k,m_{i},\epsilon,l}\bar{h}-
			\int_{M_k}\sqrt{e^{v_\epsilon(\psi)-\varphi_{l}}c_n(-v_\epsilon(\psi))}u_{n,k,\epsilon,l}\bar{h}|\\
			\le &
			|\int_{M_k}\sqrt{e^{v_\epsilon(\psi_{m_i})-\varphi_{l}}c_n(-v_\epsilon(\psi_{m_i}))}u_{n,k,m_{i},\epsilon,l}\bar{h}-
			\int_{M_k}\sqrt{e^{v_\epsilon(\psi_{m_i})-\varphi_{l}}c_n(-v_\epsilon(\psi_{m_i}))}u_{n,k,\epsilon,l}\bar{h}|\\
			+&
			|\int_{M_k}\sqrt{e^{v_\epsilon(\psi_{m_i})-\varphi_{l}}c_n(-v_\epsilon(\psi_{m_i}))}u_{n,k,\epsilon,l}\bar{h}-
			\int_{M_k}\sqrt{e^{v_\epsilon(\psi)-\varphi_{l}}c_n(-v_\epsilon(\psi))}u_{n,k,\epsilon,l}\bar{h}|\\
			=&
			I_1+I_2.
		\end{split}
	\end{equation}
	\par
	Note that $\sqrt{e^{v_\epsilon(\psi_{m_i})-\varphi_{l}}c_n(-v_\epsilon(\psi_{m_i}))}$ is uniformly bounded with respect to $m_i$ on $M_k$.
	There exists constant $M_{\epsilon,l,k}$ such that
	\begin{equation}\nonumber
		\begin{split}
			I_1\le & M_{\epsilon,l,k} |\int_{M_k}u_{n,k,m_{i},\epsilon,l}\bar{h}-
			\int_{M_k}u_{n,k,\epsilon,l}\bar{h}|\\
			=&
			M_{\epsilon,l,k} |\langle u_{n,k,m_{i},\epsilon,l},h\rangle-
			\langle u_{n,k,\epsilon,l},h\rangle|.
		\end{split}
	\end{equation}
	By the definition of weakly convergence, $\lim\limits_{m_i \to +\infty} I_1=0$.
	\par
	For $I_2$, it follows from dominated convergence theorem that
	\begin{equation}\nonumber
		\begin{split}
			\lim\limits_{m_i \to +\infty}
			&\int_{M_k}\sqrt{e^{v_\epsilon(\psi_{m_i})-\varphi_{l}}c_n(-v_\epsilon(\psi_{m_i}))}u_{n,k,\epsilon,l}\bar{h}\\
			=&
			\int_{M_k}\sqrt{e^{v_\epsilon(\psi)-\varphi_{l}}c_n(-v_\epsilon(\psi))}u_{n,k,\epsilon,l}\bar{h}.
		\end{split}
	\end{equation}
	\par
	Thus $\lim\limits_{m_i \to +\infty} I=0$, i.e. $u_{n,k,m_{i},\epsilon,l}\sqrt{e^{v_\epsilon(\psi_{m_i})-\varphi_{l}}c_n(-v_\epsilon(\psi_{m_i}))}$
	weakly converge to
	$u_{n,k,\epsilon,l}\sqrt{e^{v_\epsilon(\psi)-\varphi_{l}}c_n(-v_\epsilon(\psi))}$.
	\par
	By (\ref{s5.1}) and the weakly convergence property of
	$u_{n,k,m_{i},\epsilon,l}\sqrt{e^{v_\epsilon(\psi_{m_i})-\varphi_{l}}c_n(-v_\epsilon(\psi_{m_i}))}$
	we have
	\begin{equation}
		\begin{split}
			&\int_{M_k}|u_{n,k,\epsilon,l}|^2 e^{v_\epsilon(\psi)-\varphi_{l}}c_n(-v_\epsilon(\psi))\\
			\le&
			\liminf_{m_{i} \to +\infty}\int_{M_k}|u_{n,k,m_{i},\epsilon,l}|^2 e^{v_\epsilon(\psi_{m_{i}})-\varphi_{l}}c_n(-v_\epsilon(\psi_{m_{i}}))\\
			\le&
			\liminf_{m_{i} \to +\infty}\int_{M_k}v''_{\epsilon}(\psi_{m_i})|F|^2 e^{-u(-v_\epsilon(\psi_{m_i}))-\varphi_{l}}\\
			= &
			\int_{M_k}v''_{\epsilon}(\psi)|F|^2 e^{-u(-v_\epsilon(\psi))-\varphi_{l}}.
			\label{s5.3}
		\end{split}
	\end{equation}
	\par
	
	Note that
	$\inf\limits_{m_i}\inf\limits_{M_k}
	e^{-u(-v_\epsilon(\psi_{m_i}))-\varphi_{l}}>0$, it follows from \eqref{estimate of h}, we also have
	$\sup\limits_{m_i}\int_{M_k } |h_{n,k,m_i,\epsilon,l}|^2 < +\infty$.\par
	We can extract a subsequence (also denoted by $h_{n,k,m_i,\epsilon,l}$)
	such that $h_{n,k,m_i,\epsilon,l}$ weakly convergence to $h_{n,k,\epsilon,l}$ in
	$L^2(M_k,\wedge^{n,1}T^*M)$.\par
	As $\lim\limits_{ m_i \to +\infty}\delta_{m_i}=0$, we know $\sqrt{2b\tilde{M}\delta_{m_i}}h_{n,k,m_i,\epsilon,l}$ weakly converge to 0, as $m_i \to +\infty$.
	\par
	Replace $m$ by $m_i$ in (\ref {equation2}) and let $m_i \to +\infty$, by the weak continuity of differentiations, we get
	\begin{equation}\nonumber
		D''u_{n,k,\epsilon,l}=D''[(1-v'_{\epsilon}(\psi))F].
	\end{equation}
	\par
	Let $F_{n,k,\epsilon,l}=-u_{n,k,\epsilon,l}+(1-v'_\epsilon(\psi))F$, then $D''
	F_{n,k,\epsilon,l}=0$, i.e. $F_{n,k,\epsilon,l}$ is holomorphic $(n,0)$ form on $M_k$.
	Inequality (\ref {s5.3}) becomes
	\begin{equation}
		\begin{split}
			&\int_{M_k}
			|F_{n,k,\epsilon,l}-(1-v'_\epsilon(\psi))F|^2
			e^{v_\epsilon(\psi)-\varphi_{l}}c_n(-v_\epsilon(\psi))
			\le
			\int_{M_k}
			v''_{\epsilon}(\psi)|F|^2e^{-u(-v_\epsilon(\psi))-\varphi_{l}}.
		\end{split}
		\label{343}
	\end{equation}
	\par
	\
	
	\emph{Step 6: when $\epsilon$ goes to $0$.}
	\\
	\par
	Note that $\sup\limits_{\epsilon,M_k}e^{-u(-v_\epsilon(\psi))-\varphi_l} <
	+ \infty$ and
	\begin{equation}\nonumber
		v''_{\epsilon}(\psi)|F|^2 e^{-u(-v_\epsilon(\psi))-\varphi_l} \le
		\frac{2}{B}\mathbb{I}_{\{-t_0-B<\psi<-t_0\}}|F|^2
		\sup\limits_{\epsilon,M_k}e^{-u(-v_\epsilon(\psi))-\varphi_l},
	\end{equation}
	then it follows from
	\begin{equation}\nonumber
		\int_{\{\psi<-t_0\}\cap \overline{M_k}}|F|^2 < +\infty,
	\end{equation}
	and the dominated convergence theorem that
	\begin{equation}
		\begin{split}
			&\lim\limits_{\epsilon \to 0} \int_{M_k }
			v''_{\epsilon}(\psi)|F|^2 e^{-u(-v_\epsilon(\psi))-\varphi_{l}}
			=\int_{M_k }
			\frac{1}{B}\mathbb{I}_{\{-t_0-B<\psi<-t_0\}}|F|^2 e^{-u(-v(\psi))-\varphi_{l}}\\
			\le&
			(\sup\limits_{M_k}e^{-u(-v(\psi))})\int_{M_k }
			\frac{1}{B}\mathbb{I}_{\{-t_0-B<\psi<-t_0\}}|F|^2 e^{-\varphi_{l}}
			< + \infty.
		\end{split}
		\label{346}
	\end{equation}
	\par
	Note that $\inf\limits_{\epsilon}\inf\limits_{M_k}
	e^{v_{\epsilon}(\psi)-\varphi_{l}}c_n(-v_\epsilon(\psi))>0$, then it follows from
	(\ref{343}) and (\ref{346}) that $\sup\limits_{\epsilon}\int_{M_k }
	|F_{n,k,\epsilon,l}-(1-v'_{\epsilon}(\psi))F|^2 < +\infty$.
	Combining with
	\begin{equation}
		\sup\limits_{\epsilon}\int_{M_k }|(1-v'_{\epsilon}(\psi))F|^2
		\le
		\int_{M_k }\mathbb{I}_{\{\psi<-t_0\}}|F|^2< + \infty,
		\label{347}
	\end{equation}
	one can obtain that
	\begin{equation}\nonumber
		\sup\limits_{\epsilon}\int_{M_k }|F_{n,k,\epsilon,l}|^2
		< + \infty,
	\end{equation}
	which implies that there exists a subsequence of $\{F_{n,k,\epsilon,l}\}$ (also
	denoted by $\{F_{n,k,\epsilon,l}\}$) compactly convergent to a holomorphic
	$(n,0)$ form on $M_k$ denoted by $F_{n,k,l}$.
	\par
	Note that $\sup\limits_{\epsilon}\sup\limits_{M_k}
	e^{v_{\epsilon}(\psi)-\varphi_{l}}c_n(-v_\epsilon(\psi))< + \infty$ and
	$|F_{n,k,\epsilon,l}-(1-v'_\epsilon(\psi))F|^2
	\le 2(|F_{n,k,\epsilon,l}|^2+|\mathbb{I}_{\{\psi<-t_0\}}|F|^2)$, then it follows
	inequality (\ref{347}) and the dominated convergence theorem on any given $K\subset
	\subset M_k$ with dominant function
	\begin{equation}\nonumber
		2(\sup\limits_{\epsilon}\sup\limits_{K}(|F_{n,k,\epsilon,l}|^2)
		+\mathbb{I}_{\{\psi<-t_0\}}|F|^2)
		\sup\limits_{\epsilon}\sup\limits_{M_k}e^{v_\epsilon(\psi)-\varphi_l}c_n(-v_\epsilon(\psi))
	\end{equation}
	that
	\begin{equation}
		\begin{split}
			&\lim\limits_{\epsilon \to 0}
			\int_{K}
			|F_{n,k,\epsilon,l}-(1-v'_{\epsilon}(\psi))F|^2
			e^{v_\epsilon(\psi)-\varphi_{l}}c_n(-v_\epsilon(\psi))\\
			=&\int_{K}
			|F_{n,k,l}-(1-b(\psi))F|^2 e^{v(\psi)-\varphi_{l}}c_n(-v(\psi)).
		\end{split}
	\end{equation}
	\par
	Combining with inequality (\ref{343}) and (\ref{346}), one can obtain that
	\begin{equation}\nonumber
		\begin{split}
			&\int_{K}
			|F_{n,k,l}-(1-b(\psi))F|^2 e^{v{(\psi)}-\varphi_{l}}c_n(-v(\psi))\\
			\le&
			(\sup\limits_{M_k}e^{-u(-v(\psi))})\int_{M_k }
			\frac{1}{B}\mathbb{I}_{\{-t_0-B<\psi<-t_0\}}|F|^2 e^{-\varphi_{l}},
		\end{split}
	\end{equation}
	which implies
	\begin{equation}
		\begin{split}
			&\int_{M_k}
			|F_{n,k,l}-(1-b(\psi))F|^2 e^{v{(\psi)}-\varphi_{l}}c_n(-v(\psi))\\
			\le&
			(\sup\limits_{M_k}e^{-u(-v(\psi))})\int_{M_k }
			\frac{1}{B}\mathbb{I}_{\{-t_0-B<\psi<-t_0\}}|F|^2 e^{-\varphi_{l}}.
		\end{split}
		\label{352}
	\end{equation}
	\par
	\
	
	\emph{Step 7: when $l$ goes to $+\infty$.}
	\\
	\par
	Note that
	\begin{equation}\nonumber
		\begin{split}
			\int_{M_k }
			\frac{1}{B}\mathbb{I}_{\{-t_0-B<\psi<-t_0\}}|F|^2 e^{-\varphi_{l}}
			\le
			\int_{M_k }
			\frac{1}{B}\mathbb{I}_{\{-t_0-B<\psi<-t_0\}}|F|^2 e^{-\varphi}<+ \infty,
			\label{353}
		\end{split}
	\end{equation}
	and $\sup\limits_{M_k} e^{-u(-v(\psi))}< +\infty$,
	then it from (\ref{352}) that
	\begin{equation}\nonumber
		\sup\limits_l \int_{M_k}|F_{n,k,l}-(1-b(\psi))F|^2
		e^{v(\psi)-\varphi_{l}}c_n(-v(\psi)) < + \infty.
		\label{354}
	\end{equation}
	\par
	Combining with $\inf\limits_l \inf\limits_{M_k}
	e^{v(\psi)-\varphi_{l}}c_n(-v(\psi)) >0$, one can obtain that
	$\sup\limits_l \int_{M_k}|F_{n,k,l}-(1-b(\psi))F|^2 < +\infty$.
	Note that
	\begin{equation}
		\begin{split}
			\int_{M_k}|(1-b(\psi))F|^2\le \int_{M_k}\mathbb{I}_{\{\psi<-t_0\}}|F|^2< + \infty.
		\end{split}
		\label{355}
	\end{equation}
	\par
	Then $\sup\limits_l \int_{M_k}|F_{n,k,l}|^2 < +\infty$, which implies that
	there exists a compactly convergence subsequence of $\{F_{n,k,l}\}$ denoted by
	$\{F_{n,k,l'}\}$ which converge to a holomorphic $(n,0)$ form $F_{n,k}$ on $M_k$.
	\par
	Note that $\sup\limits_{M_k} e^{v(\psi)-\varphi_l}c_n(-v(\psi))<
	+\infty$, then it follows (\ref{355}) and the dominated convergence theorem on any given
	compact subset $K$ of $M_k$ with dominant function
	\begin{equation}\nonumber
		2(\sup\limits_{l'}\sup\limits_{K}(|F_{n,k,l'}|^2)
		+\mathbb{I}_{\{\psi<-t_0\}}|F|^2)
		\sup\limits_{M_k}e^{v(\psi)-\varphi_l}c_n(-v(\psi))
	\end{equation}
	that
	\begin{equation}
		\begin{split}
			&\lim\limits_{l' \to +\infty}
			\int_{K}
			|F_{n,k,l'}-(1-b(\psi))F|^2 e^{v(\psi)-\varphi_{l}}c_n(-v(\psi))
			\\
			=&\int_{K}
			|F_{n,k}-(1-b(\psi))F|^2 e^{v(\psi)-\varphi_{l}}c_n(-v(\psi)).
		\end{split}
		\label{357}
	\end{equation}
	\par
	Note that for $l'>l$, $\varphi_l' < \varphi_l $ holds, then it follows from (\ref{352})
	and (\ref{353}) that
	\begin{equation}
		\begin{split}
			&\lim\limits_{l' \to +\infty}
			\int_{K}
			|F_{n,k,l'}-(1-b(\psi))F|^2 e^{v(\psi)-\varphi_{l}}c_n(-v(\psi))
			\\
			\le&
			\limsup\limits_{l' \to +\infty}
			\int_{K}
			|F_{n,k,l'}-(1-b(\psi))F|^2 e^{v(\psi)-\varphi_{l'}}c_n(-v(\psi))\\
			\le&
			\limsup\limits_{l' \to +\infty}
			(\sup\limits_{M_k}e^{-u(-v(\psi))})\int_{M_k}
			\frac{1}{B}\mathbb{I}_{\{-t_0-B<\psi<-t_0\}}|F|^2e^{-\varphi_{l'}}\\
			\le&
			(\sup\limits_{M_k}e^{-u(-v(\psi))})\int_{M_k}
			\frac{1}{B}\mathbb{I}_{\{-t_0-B<\psi<-t_0\}}|F|^2e^{-\varphi}<+ \infty.
		\end{split}
	\end{equation}
	\par
	Combining with equality (\ref{357}), one can obtain that
	\begin{equation}\nonumber
		\begin{split}
			&\int_{K}
			|F_{n,k}-(1-b(\psi))F|^2 e^{v(\psi)-\varphi_{l}}c_n(-v(\psi))\\
			\le&
			(\sup\limits_{M_k}e^{-u(-v(\psi))})\int_{M_k}
			\frac{1}{B}\mathbb{I}_{\{-t_0-B<\psi<-t_0\}}|F|^2e^{-\varphi}<+ \infty,
		\end{split}
	\end{equation}
	for any compact subset $K$ of $M_k$, which implies
	\begin{equation}
		\begin{split}
			&\int_{M_k}
			|F_{n,k}-(1-b(\psi))F|^2 e^{v(\psi)-\varphi_{l}}c_n(-v(\psi))\\
			\le&
			(\sup\limits_{M_k}e^{-u(-v(\psi))})\int_{M_k}
			\frac{1}{B}\mathbb{I}_{\{-t_0-B<\psi<-t_0\}}|F|^2e^{-\varphi}<+ \infty.
		\end{split}
	\end{equation}
	\par
	When $l \to +\infty$, it follows from Levi's theorem that
	\begin{equation}
		\begin{split}
			&\int_{M_k}
			|F_{n,k}-(1-b(\psi))F|^2 e^{v(\psi)-\varphi}c_n(-v(\psi))\\
			\le&
			(\sup\limits_{M_k}e^{-u(-v(\psi))})\int_{M_k}
			\frac{1}{B}\mathbb{I}_{\{-t_0-B<\psi<-t_0\}}|F|^2e^{-\varphi}<+ \infty.
		\end{split}
		\label{464}
	\end{equation}
	\par
	\
	
	\emph{Step 8: ODE System.}
	\\
	\par
	Fix $n$, we want to find $\eta$ and $\phi$ such that
	$(\eta+g^{-1})=e^{-v_\epsilon(\psi_m)}e^{-\phi}\frac{1}{c_n(-v_{\epsilon}(\psi_m))}$.
	As $\eta=s(-v_{\epsilon}(\psi_m))$ and $\phi=u(-v_{\epsilon}(\psi_m))$, we have
	$(\eta+g^{-1})e^{v_\epsilon(\psi_m)}e^{\phi}=(s+\frac{s'^2}{u''s-s''})e^{-t}e^u\circ(-v_\epsilon(\psi_m))$.\\
	
	Summarizing the above discussion about $s$ and $u$, we are naturally led to a system of
	ODEs:
	\begin{equation}
		\begin{split}
			&1)(s+\frac{s'^2}{u''s-s''})e^{u-t}=\frac{1}{c_n(t)},\\
			&2)s'-su'=1,
		\end{split}
		\label{362}
	\end{equation}
	when $t\in(S,+\infty)$.\par
	We directly solve the ODE system (\ref{362}) and get
	$u(t)=-\log(\int^t_S c_n(t_1)e^{-t_1}dt_1)$ and
	$s(t)=\frac{\int^t_S(\int^{t_2}_S c_n(t_1)e^{-t_1}dt_1)dt_2}{\int^t_S
		c_n(t_1)e^{-t_1}dt_1)}$. It follows that $s\in C^{\infty}((S,+\infty))$ satisfies
	$s \ge 0$, $\lim\limits_{t \to +\infty}u(t)=-\log(\int^{+\infty}_S
	c_n(t_1)e^{-t_1}dt_1)$ exists and $u\in C^{\infty}((S,+\infty))$ satisfies
	$u''s-s''>0$.
	\par
	As $u(t)=-\log(\int^t_S c_n(t_1)e^{-t_1}dt_1)$ is decreasing with respect to t, then
	it follows from $-T \ge v(t) \ge \max\{t,-t_0-B_0\} \ge -t_0-B_0$, for any $t \le 0$
	that
	\begin{equation}
		\begin{split}
			\sup\limits_{M_k}e^{-u(-v(\psi))} \le
			\sup\limits_{M}e^{-u(-v(\psi))} \le
			\sup\limits_{t\in(S,t_0+B]}e^{-u(t)}=\int^{t_0+B}_S c_n(t_1)e^{-t_1}dt_1.
		\end{split}
	\end{equation}
	\par
	Hence on $M_k$, we have
	\begin{equation}
		\begin{split}
			&\int_{M_k}
			|F_{n,k}-(1-b(\psi))F|^2 e^{v(\psi)-\varphi}c_n(-v(\psi))\\
			\le&
			\int^{t_0+B}_S c_n(t_1)e^{-t_1}dt_1\int_{M_k}
			\frac{1}{B}\mathbb{I}_{\{-t_0-B<\psi<-t_0\}}|F|^2e^{-\varphi}\\
			\le&
			C\int^{t_0+B}_S c_n(t_1)e^{-t_1}dt_1.
			\label{566}
		\end{split}
	\end{equation}
	\par
	\
	
	\emph{Step 9: when $k$ goes to $+\infty$.}
	\\
	\par
	
	Note that for any given $k$, $e^{-\varphi+v(\psi)}c_n(-v(\psi))$ has a positive lower
	bound on $\overline{M_k}$, then it follows (\ref{566}) that for any given $k$ ,
	$\int_{M_k}
	|F_{n,k'}-(1-b(\psi))F|^2$ is bounded with respect to $k' \ge k$.
	Combining with
	\begin{equation}
		\int_{M_k}|1-b(\psi)F|^2 \le
		\int_{\overline{M_k}\cap\{\psi<-t_0\}}|F|^2<+ \infty,
		\label{567}
	\end{equation}
	One can obtain that $\int_{M_k}
	|F_{n,k'}|^2$ is bounded with respect to $k' \ge k$.
	\par
	By diagonal method, there exists a subsequence $F_{n,k''}$ uniformly converging on any
	$\overline{M_k}$ to a holomorphic $(n,0)$ form on M denoted by $F_{n}$. Then it
	follow from inequality (\ref{566}), (\ref{567}) and the dominated convergence theorem that
	\begin{equation}\nonumber
		\begin{split}
			\int_{M_k}
			|F_{n}-(1-b(\psi))F|^2 e^{-\max\{\varphi-v(\psi),-\alpha\}}c_n(-v(\psi))
			\le
			C\int^{t_0+B}_S c_n(t_1)e^{-t_1}dt_1,
		\end{split}
	\end{equation}
	for any $\alpha>0$, then Levi's theorem implies
	\begin{equation}\nonumber
		\begin{split}
			\int_{M_k}
			|F_{n}-(1-b(\psi))F|^2 e^{-(\varphi-v(\psi))}c_n(-v(\psi))
			\le
			C\int^{t_0+B}_S c_n(t_1)e^{-t_1}dt_1.
		\end{split}
	\end{equation}
	\par
	Let $k\to +\infty$, we get
	\begin{equation}
		\begin{split}
			\int_{M}
			|F_{n}-(1-b(\psi))F|^2 e^{-(\varphi-v(\psi))}c_n(-v(\psi))
			\le
			C\int^{t_0+B}_S c_n(t_1)e^{-t_1}dt_1.
			\label{estimate of Fn}
		\end{split}
	\end{equation}
	\par
	\
	
	\par
	\emph{Step 10: when $n$ goes to $+\infty$.}
	\\
	\par
	By construction of $c_n(t)$ in Step 1, we have
	\begin{equation}
		\begin{split}
			&\int_S^{t_0+B}c_n(t_1)dt_1\\
			=&\int_S^{t_0+B}\int_{\mathbb{R}}h(e^y(t_1-S)+S)g_n(y)dydt_1\\
			=&\int_{\mathbb{R}}e^{-y}g_n(y)\int_{S}^{(t_0+B-S)e^y+S}h(s)dsdy\\
			=&\int_{\mathbb{R}}e^{-y}g_n(y)dy\int_{S}^{t_0+B}h(s)ds+
			\int_{\mathbb{R}}e^{-y}g_n(y)\int_{t_0+B}^{(t_0+B-S)e^y+S}h(s)dsdy.
			\label{integral of cn}
		\end{split}
	\end{equation}
	As
	\begin{equation}\nonumber
		\begin{split}
			&\lim_{l\to+\infty}|\int_{\mathbb{R}}e^{-y}g_n(y)dy-1|\\
			\le&\lim_{l\to+\infty}|\int_{-\frac{1}{l}}^{0}(e^{-y}-1)g_l(y)dy|
			+\lim_{l\to+\infty}|\int^{\frac{1}{l}}_{0}e^{-y}g_l(y)dy|\\
			=&0
		\end{split}
	\end{equation}
	and
	\begin{equation}\nonumber
		\begin{split}
			&|\int_{\mathbb{R}}e^{-y}g_n(y)\int_{t_0+B}^{(t_0+B-S)e^y+S}h(s)dsdy|\\
			\le&e^{\frac{1}{l}}(1+(\frac{1}{l}))h((t_0+B-S)e^{-1}+S)(t_0+B-S)(e^{\frac{1}{l}}-e^{-\frac{1}{l}}),
		\end{split}
	\end{equation}
	then it follows from inequality \eqref{integral of cn} that
	\begin{equation}
		\begin{split}
			\lim_{n\to+\infty}\int_S^{t_0+B}c_n(t_1)dt_1=\int_{S}^{t_0+B}c(t_1)e^{-t_1}dt_1.
			\label{limitof integral of cn}
		\end{split}
	\end{equation}
	Combining with $\inf\limits_l\inf\limits_M e^{v(\psi)-\varphi}c_n(-v(\psi))\ge\inf\limits_M e^{v(\psi)-\varphi}c(-v(\psi))$, we have
	\begin{equation}\nonumber
		\begin{split}
			\sup\limits_n\int_M|F_n-(1-b(\psi))F|^2<+\infty.
		\end{split}
	\end{equation}
	Note that
	\begin{equation}\nonumber
		\begin{split}
			\int_M|(1-b(\psi))F|^2\le\int_M |\mathbb{I}_{\{\psi<-t_0\}}F|^2+\infty,
		\end{split}
	\end{equation}
	then $\sup\limits_l\int_M|F_n|^2<+\infty$, which implies that there exists a compactly convergent subsequence of $\{F_n\}$ (also denoted by $\{F_n\}$), which converges to a holomorphic $(n,0)$ form $\tilde{F}$ on $M$. Then it follows from inequality \eqref{estimate of Fn} and Fatou's Lemma that
	\begin{equation}\nonumber
		\begin{split}
			&\int_{M}
			|\tilde{F}-(1-b(\psi))F|^2 e^{-(\varphi-v(\psi))}c(-v(\psi))\\
			\le&\int_{M}|F-(1-b(\psi))F|^2 e^{-(\varphi-v(\psi))}c^{-}(-v(\psi))\\
			=&\int_{M}\liminf_{n\to+\infty}|F_{n}-(1-b(\psi))F|^2 e^{-(\varphi-v(\psi))}c_n(-v(\psi))\\
			\le&\liminf_{n\to+\infty}\int_{M}|F_{n}-(1-b(\psi))F|^2 e^{-(\varphi-v(\psi))}c_n(-v(\psi))\\
			\le&C\liminf_{n\to+\infty}\int^{t_0+B}_S c_n(t_1)e^{-t_1}dt_1\\
			=&C\int^{t_0+B}_S c(t_1)e^{-t_1}dt_1.
		\end{split}
	\end{equation}
	Lemma \ref{lemma2.1} is proved.

	
	\vspace{.1in} {\em Acknowledgements}.
The first author and the second author were supported by National Key R\&D Program of China 2021YFA1003100. The first author was supported by NSFC-12425101, NSFC-11825101, NSFC-11522101 and NSFC-11431013. The second author was supported by the Talent Fund of Beijing Jiaotong University.
The third author was supported by China Postdoctoral Science Foundation BX20230402 and 2023M743719.

	\bibliographystyle{references}\bibliography{xbib}

\end{document}